\documentclass[12pt]{amsart}

\usepackage[latin2]{inputenc}
\usepackage{t1enc}
\usepackage[mathscr]{eucal}
\usepackage{pgfplots}
\pgfplotsset{width=7cm,compat=1.8}
\usepackage{pgfplotstable}
\usepackage{caption}
\usepackage{graphicx}
\usepackage{graphics}
\numberwithin{equation}{section}
\usepackage[margin=3cm]{geometry}
\usepackage{hyperref} 
\usepackage{tikz}
\usepackage{pgfplots}
\pgfplotsset{ticks=none, compat=newest}
\usepackage{chngcntr}
\counterwithout{figure}{section}
\pgfplotsset{soldot/.style={color=black,only marks,mark=*}}
\pgfplotsset{holdot/.style={color=black,fill=white,only marks,mark=*}}
\usepackage{mathtools,array}
\usepackage{tikz}
\usetikzlibrary{calc,intersections,}
\captionsetup[figure]{labelfont={bf},name={Fig.},labelsep=period}
\hypersetup{
 colorlinks=true,       
    linkcolor=cyan,          
    citecolor=green,        
    filecolor=magenta,      
   urlcolor=cyan           
   }

\allowdisplaybreaks
\tolerance=2400

\newcommand{\eq}{\begin{equation}}
\newcommand{\en}{\end{equation}}

\DeclareSymbolFont{bbold}{U}{bbold}{m}{n}
\DeclareSymbolFontAlphabet{\mathbbold}{bbold}
\newcommand{\ind}{\mathbbold{1}}

\makeatletter

\newcommand{\Rmnum}[1]{\expandafter\@slowromancap\romannumeral #1@}
\makeatother

\newtheorem{thm}{Theorem}[section]
\newtheorem{prop}[thm]{Proposition}
\newtheorem{lem}[thm]{Lemma}

\theoremstyle{definition}
\newtheorem{remark}[thm]{Remark}
\newtheorem{defn}[thm]{Definition}

\newtheorem{conjecture}[thm]{Conjecture}
\newtheorem{notation}[thm]{Notation}
\numberwithin{equation}{section}

\renewcommand{\and}{ \quad \text{and} \quad }

\begin{document}
\pgfmathsetseed{184}
\newcommand{\Emmett}[5]{
\draw[#4] (0,0)
\foreach \x in {1,...,#1}
{   -- ++(#2,rand*#3-0.5*#2*#2-0.75*#2)
}
node[right] {#5};
}

\title[]{Scalar conservation laws with white noise initial data}

\author{Mehdi Ouaki}
\address{Department of Statistics \#3860 \\ 451 Evans Hall \\ University of California at Berkeley \\ Berkeley, CA 94720-3860 \\ USA}
\email{mouaki@berkeley.edu}

\subjclass[2010]{60G51, 60J65, 60J60, 60J75, 35L65}

\keywords{scalar conservation law, white noise, random initial data, path decomposition, Chernoff distribution, abrupt process}

\date{\today}

\begin{abstract} 
The statistical description of the scalar conservation law of the form $\rho_t=H(\rho)_x$ with $H: \mathbb{R} \rightarrow \mathbb{R}$ a smooth convex function has been an object of interest when the initial profile $\rho(\cdot,0)$ is random. The special case when $H(\rho)=\frac{\rho^2}{2}$ (Burgers equation) has in particular received extensive interest in the past and is now understood for various random initial conditions. We prove in this paper a conjecture on the profile of the solution at any time $t>0$ for a general class of Hamiltonians $H$ and show that it is a stationary piecewise-smooth Feller process. Along the way, we study the excursion process of the two-sided linear Brownian motion $W$ below any strictly convex function $\phi$ with superlinear growth and derive a generalized Chernoff distribution of the random variable $\text{argmax}_{z \in \mathbb{R}} (W(z)-\phi(z))$. Finally, when $\rho(\cdot,0)$ is a white noise derived from an \textit{abrupt} L\'evy process, we show that the structure of shocks of the solution is a.s discrete at any fixed time $t>0$ under some mild assumptions on $H$. 
\end{abstract}

\maketitle
\section{Introduction}
We are interested in the following conservation law problem 
\begin{equation}\label{scalar1}
\left\{
    \begin{array}{ll}
        \rho_t=(H(\rho))_x ~~ , \text{ for }~~ t>0, x \in \mathbb{R}\\
        \rho(x,0)=\xi(x) ~~ , ~~x \in \mathbb{R}
    \end{array}
\right.
\end{equation}
where $H$ is a $C^2$ strictly convex function with superlinear growth at infinity and $\xi$ is a white noise. A question of interest is to describe the law of the process $\rho(\cdot,t)$ at any given time $t>0$. \\

\subsection{Background}\leavevmode\\

There is a straightforward link between the scalar conservation law and the Hamilton-Jacobi PDE. Indeed, if one defines
\begin{align*}
u(x,t)=\int_{-\infty}^{x}\rho(y,t)dy
\end{align*}
and the potential 
\begin{align*}
U_0(x)=\int_{-\infty}^{x} \xi(y) dy
\end{align*}
then $u$ solves the PDE
\begin{equation}\label{HJ1}
u_t=H(u_x)
\end{equation}
and is determined by the Hopf-Lax formula (see \cite{evans10}[Theorem 4, Chapter 3.3])
\begin{equation}\label{hopf-lax}
u(x,t)=\sup_{y \in \mathbb{R}} \left(U_0(y)-tL\left(\frac{y-x}{t}\right)\right)
\end{equation}
where $L$ is the Legendre transform of $H$ defined as 
\begin{align*}
L(q)=\max\limits_{p \in \mathbb{R}} \left(qp-H(p)\right)
\end{align*}
The rightmost maximizer $y(x,t)$ in the equation \eqref{hopf-lax} is called the backward Lagrangian, and is directly linked to the entropy solution $\rho$ of the scalar conservation law \eqref{scalar1} by the Lax-Oleinik formula (see \cite{evans10}[Theorem 1, Chapter 3.4])
\begin{equation*}
\rho(x,t)=(H')^{-1}\left (\frac{y(x,t)-x}{t}\right)=L'\left(\frac{y(x,t)-x}{t}\right)
\end{equation*}
The reader may be familiar with this other form of the Hamilton-Jacobi PDE 
\begin{equation}\label{HJ2}
u_t+H(u_x)=0
\end{equation}
If we denote by $u$ a solution of \eqref{HJ2}, then it is easy to see that $\tilde{u}(x,t):=-u(x,t)$ verifies $\tilde{u}_t=\tilde{H}(\tilde{u}_x)$ for the Hamiltonian $\tilde{H}(\rho)=H(-\rho)$. We will thus only restrict ourselves to the version of the scalar conservation law in \eqref{scalar1}.\\

When the Hamiltonian $H$ takes the simple form $H(\rho)=\frac{\rho^2}{2}$, the scalar conservation law \eqref{scalar1} is called Burgers equation and is written $\rho_t=\rho \rho_x$. The Lax-Oleinik formula simplifies to
\begin{equation}\label{burgers-lo}
\rho(x,t)=\frac{y(x,t)-x}{t}
\end{equation}
The Burgers equation has seen an extensive interest when the initial data $\rho(\cdot,0)$ is random in the context of Burgers turbulence. We will present thereby the most relevant results in this area. 

\subsection{Burgers equation when $\rho(\cdot,0)$ is a Brownian white noise}\leavevmode \\

This is the case when the initial potential $U_0$ is expressed as
\begin{equation}\label{noise}
U_0(x)=\sigma B(x), ~~ x \in \mathbb{R}
\end{equation}
where $\sigma>0$ is a diffusion factor and $B$ is a two-sided standard linear Brownian motion. 
In a remarkable paper \cite{groeneboom89} with the aim of studying the global behavior of isotonic estimators, Groeneboom completely determined the statistics of the process
\begin{align*}
\left(V(a):=\sup \left\{ x \in \mathbb{R} : B(x)-(x-a)^2 \text{ is maximal}\right\}, a \in \mathbb{R}\right)
\end{align*}
He showed that this process is pure-jump with jump kernels expressed in terms of Airy functions. 
By the Hopf-Lax formula and \eqref{burgers-lo}, this process is related to the solution of the Burgers equation with Brownian white noise initial data. \\

More precisely, let $\rho_{\sigma}(x,t)$ be the entropy solution of Burgers equation when the initial potential is determined by \eqref{noise}. Since in the Burgers case the Hamiltonian enjoys the same scaling as the Brownian motion. It follows that for every $t>0$, the process $(\rho_{\sigma}(x,t) , x \in \mathbb{R})$ has the same law as $(\sigma^{\frac{2}{3}}t^{-\frac{1}{3}}\rho_1(x((\sigma t)^{-\frac{2}{3}},1) , x \in \mathbb{R})$. The following theorem gives a precise description of the law of the entropy solution at time $t=1$.
\begin{thm}[Groeneboom 89, \cite{groeneboom89}]\label{groeneboom}
The process $(\rho_{\frac{1}{\sqrt{2}}}(x,1),x\in \mathbb{R})$ is a stationary piecewise-linear Markov process with generator $\mathcal{A}$ acting on a test function $\varphi \in C_{c}^{\infty}(\mathbb{R})$ as 
\begin{equation*}
\mathcal{A}\varphi(y)=-\varphi'(y)+\int_{y}^{\infty} (\varphi(z)-\varphi(y))n(y,z)dz
\end{equation*}
The jump density $n$ is given by the formula 
\begin{align*}
n(y,z)=\frac{J(z)}{J(y)}K(z-y)~~ ,~~ z>y
\end{align*}
where $J$ and $Z$ are positive functions defined on the line and positive half-line respectively, whose Laplace transforms
\begin{align*}
j(q)=\int_{-\infty}^{\infty} e^{qy} J(y)dy, ~~~~~k(q)= \int_{0}^{\infty}e^{-qy} K(y)dy
\end{align*}
are meromorphic functions on $\mathbb{C}$ given by
\begin{align*}
j(q)=\frac{1}{\mathrm{Ai}(q)}, ~~~~ k(q)=-2\frac{d^2}{dq^2} \mathrm{log}\mathrm{Ai}(q)
\end{align*}
where $\mathrm{Ai}$ denotes the first Airy function.
\end{thm}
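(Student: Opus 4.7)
The plan is to translate the statement into Groeneboom's original argmax problem via the Lax--Oleinik formula, and then derive both the Markov structure and the explicit jump kernel by a path decomposition of Brownian motion around a parabolic maximum.

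First I would unfold the formula for $\rho_{1/\sqrt{2}}(x,1)$. For $H(\rho)=\rho^2/2$ the Legendre transform is $L(q)=q^2/2$, so the Lax--Oleinik formula at $t=1$ reads $\rho(x,1)=y(x,1)-x$ with $y(x,1)=\mathrm{argmax}_y\bigl(\tfrac{1}{\sqrt{2}}B(y)-\tfrac{1}{2}(y-x)^2\bigr)$. The Brownian rescaling $y\mapsto \sqrt{2}\,y,\ x\mapsto \sqrt{2}\,x$ brings this into the canonical form $V(x)=\mathrm{argmax}_y(B(y)-(y-x)^2)$ studied in \cite{groeneboom89}, so it suffices to prove the theorem with $\rho_{1/\sqrt{2}}(x,1)$ replaced by $V(x)-x$ up to a deterministic affine rescaling.

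The next step is the qualitative path structure. Because $B$ is locally H\"older of any exponent less than $1/2$ while the parabolic penalty is smooth and strictly convex, the argmax is a.s.\ unique for each $x$; a standard envelope argument then shows that $x\mapsto V(x)$ is nondecreasing, pure-jump, and right-continuous. Between jumps $V$ is constant, so $\rho(x,1)=V(x)-x$ decreases linearly with slope $-1$ and jumps upward at the shocks. The Markov property follows from the strong Markov property of $B$ at the space point $V(x)$: conditionally on the current argmax value and location, the future $(V(x'),x'\ge x)$ depends on the Brownian path only through its restriction to $[V(x),\infty)$, with the height $B(V(x))$ serving as the one-dimensional sufficient statistic.

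To extract the stationary law and jump intensity I would combine a Williams-type decomposition of $B$ into its pre- and post-maximum pieces with a Cameron--Martin removal of the parabolic drift. After Girsanov the optimisation becomes one of Brownian motion against a time-dependent linear barrier with a Gaussian density weighting, and Feynman--Kac reduces the relevant survival kernels to the Sturm--Liouville problem associated with the Airy equation $u''(q)=q\,u(q)$. Writing $J$ and $K$ as integrals of products of $\mathrm{Ai}$ and $\mathrm{Bi}$ and using the Wronskian $\mathrm{Ai}(q)\mathrm{Bi}'(q)-\mathrm{Ai}'(q)\mathrm{Bi}(q)=1/\pi$ should yield the Laplace transforms $j(q)=1/\mathrm{Ai}(q)$ and $k(q)=-2(\log\mathrm{Ai})''(q)$ claimed in the theorem.

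The main obstacle is this last step: controlling the joint law of $B$ below two disjoint parabolic arches (which governs $n(y,z)$) and packaging the resulting double Laplace transforms into Airy-function quotients of the clean multiplicative form $n(y,z)=J(z)K(z-y)/J(y)$. This factorisation is the algebraic heart of Groeneboom's analysis and requires sustained use of Airy function identities; in particular, disentangling the spatial dependence of $n$ into a separable product of $J(z)/J(y)$ and $K(z-y)$ is what makes the generator $\mathcal{A}$ as simple as stated.
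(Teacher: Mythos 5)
The statement you are asked to prove is the quoted Theorem of Groeneboom (1989); the paper cites it without proof and instead derives it, in the special parabolic case, as a corollary of the general machinery developed in Sections \ref{sect3} and \ref{sect4}. Your route is essentially that of the paper: Lax--Oleinik reduction to an argmax problem, Millar-style path decomposition of the conditioned process at its ultimate maximum, Girsanov removal of the parabolic drift, and Feynman--Kac reduction to a Sturm--Liouville problem whose Green function is built from $\mathrm{Ai}$ and $\mathrm{Bi}$. The paper itself explicitly records this reduction in the remark following Theorem \ref{jou}, where it observes that for $\phi''$ constant the Fourier transform in time of the forward equation for $m$ yields exactly the Airy ODE, and in Proposition 4.9, which verifies that the excursion-area formulas of Theorems \ref{chernoff} and \ref{jou} agree with Groeneboom's Airy expressions. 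So the proposal is on the right track and matches the paper's general approach.

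Two concrete corrections and one missing ingredient. First, the Brownian rescaling you wrote, $y\mapsto\sqrt{2}\,y$, does not put $\tfrac{1}{\sqrt{2}}B(y)-\tfrac12(y-x)^2$ into the canonical Groeneboom form: balancing the square-root scaling of $B$ against the quadratic scaling of the parabola requires $c^{3/2}/\sqrt2=1$, i.e.\ $y\mapsto 2^{1/3}y$, $x\mapsto 2^{1/3}x$. Second, and more importantly, you correctly flag the factorisation $n(y,z)=J(z)K(z-y)/J(y)$ as the difficult step but do not identify what actually makes it go through. The general kernel formula from Proposition \ref{kernel} gives
\begin{align*}
K^{\phi}(y,z)=\frac{\phi'(z)-\phi'(y)}{\sqrt{2\pi(z-y)^3}}\,\exp\!\Bigl(-\tfrac12\textstyle\int_y^z\phi'(u)^2\,du\Bigr)\,\mathbb{E}\Bigl[\exp\!\Bigl(-\textstyle\int_y^z\phi''(u)\mathbf{e}(u)\,du\Bigr)\Bigr],
\end{align*}
and in the parabolic case the first factor depends on $z-y$, the excursion factor depends on $z-y$ (by Brownian scaling, since $\phi''$ is constant), but the Cameron--Martin factor is $\exp\bigl(-\tfrac{2}{3}(z^3-y^3)\bigr)$, which is \emph{not} a function of $z-y$. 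To obtain the clean product form you must split it as $e^{-\frac23 z^3}/e^{-\frac23 y^3}$ and absorb each piece into the boundary function, replacing the paper's $j(\cdot)$ by $J(\cdot)=e^{-\frac23(\cdot)^3}j(\cdot)$. This is precisely the rebalancing the paper records in its remark following Theorem \ref{chernoff} (``$k(t)=e^{\frac23 t^3}g(t)$'') and carries out in Proposition 4.9. Without this step, the Wronskian identity for $\mathrm{Ai},\mathrm{Bi}$ will not produce the separable form the theorem asserts, so your proposal has a genuine gap at the algebraic heart of the argument.
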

\begin{remark}
For general $t>0$, the process $(\rho_{\frac{1}{\sqrt{2}}}(x,t),x \in \mathbb{R})$ is also a stationary piecewise-linear Markov process with generator
\begin{equation*}
\mathcal{A}^t\varphi(y)=-\frac{1}{t} \varphi'(y)+\int_{y}^{\infty} t^{-\frac{1}{3}}n(yt^{\frac{1}{3}},zt^{\frac{1}{3}})(\varphi(z)-\varphi(y))dz
\end{equation*}
In particular, the linear pieces have slope $\displaystyle -\frac{1}{t}$. 
\end{remark}

\subsection{Burgers equation when $\rho(\cdot,0)$ is a spectrally negative L\'evy process} \leavevmode \\

A L\'evy process $(X_t)_{t \in \mathbb{R}}$ is a process with stationary independent increments and such that $X_0=0$. By spectrally negative L\'evy process, we mean a process that has only downward jumps. For the Burgers equation, Bertoin in \cite{bertoin98} proved a remarkable closure theorem for this class of initial data. We quote here his result.
\begin{thm}[Bertoin 98, \cite{bertoin98}]\label{bertoin}
Consider Burgers equation of the form $\rho_t+\rho\rho_x=0$ with initial data $\xi(x)$ which is a spectrally negative L\'evy process for $x \ge 0$ and $\xi(x)=0$ for $x<0$. Assume that the expected value of $\xi(1)$ is positive. Then for each fixed $t>0$, the backward Lagrangian $y(x,t)$ has the property that $(y(x,t)-y(0,t))_{x \ge 0}$ is independent of $y(0,t)$ and is in the parameter $x$ a subordinator, i.e. a nondecreasing L\'evy process. Its distribution is that of the first passage process 
\begin{align*}
x \mapsto \inf \{ z \ge 0 : t\xi(z)+z>x \}
\end{align*}
Furthermore, if we denote by $\psi(s)$ and $\Theta(t,s)$ $(s \ge 0)$ respectively the Laplace exponents of $\xi(x)$ and $y(x,t)-y(x,0)$,
\begin{align*}
\mathbb{E}[\mathrm{exp}(s\xi(x))]=\mathrm{exp}(x\psi(s)) \\
\mathbb{E}[\mathrm{exp}(s(y(x,t)-y(0,t)))]=\mathrm{exp}(x\Theta(t,s))
\end{align*}
 then we have the functional identity 
\begin{align*}
\psi(t\Theta(t,s))+\Theta(t,s)=s
\end{align*}
Moreover, the process $(\rho(x,t)-\rho(0,t))_{x \ge 0}$ is a L\'evy process, and its Laplace exponent $\psi(t,q)$ verifies the Burgers equation 
\begin{equation}\label{laplace-exp}
\psi_t+\psi\psi_q=0
\end{equation}
\end{thm}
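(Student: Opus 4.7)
The strategy is to reduce the theorem to first-passage theory for the auxiliary spectrally negative L\'evy process
\[
\eta(z) := z + t\xi(z), \qquad z \ge 0.
\]
Because $\xi$ is spectrally negative so is $\eta$, and because $\mathbb{E}[\eta(1)] = 1 + t\mathbb{E}[\xi(1)] > 0$ one has $\eta \to +\infty$ a.s.\ and all first-passage times above positive levels are a.s.\ finite. The Hopf-Lax formula for $\rho_t + \rho\rho_x = 0$ reads
\[
u(x, t) = \inf_{y \ge 0}\left\{U_0(y) + \frac{(y - x)^2}{2t}\right\},
\]
and its integrand has (a.e.) derivative $\xi(y) + (y - x)/t = (\eta(y) - x)/t$. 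Local minima thus occur at upcrossings of $\eta$ through level $x$, and spectral negativity of $\eta$ makes these crossings continuous (creeping) with no overshoot.

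The central step is the identification of the increment process $(y(x, t) - y(0, t))_{x \ge 0}$ with a first-passage functional of an independent copy of $\eta$. Since $y(0, t)$ itself depends on the future of $\xi$ and is not a priori a stopping time of its natural filtration, a naive strong Markov argument is insufficient and must be replaced by a careful path-by-path analysis. Setting $\hat\xi(z) := \xi(y(0, t) + z) - \xi(y(0, t))$ and $\hat\eta(z) := z + t\hat\xi(z)$, the plan is to complete the square in the shifted Hopf-Lax cost and thereby show that the residual minimization problem for parameter $x \ge 0$ is again a Hopf-Lax problem driven by $\hat\xi$, whose rightmost minimizer coincides with the first-passage time of $\hat\eta$ above $x$. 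This would simultaneously yield the independence of the increments from $y(0, t)$, the subordinator property (nondecreasing with stationary independent increments), and the distributional equality with the first-passage process $x \mapsto \inf\{z \ge 0 : t\xi(z) + z > x\}$.

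For the functional identity, compute the L\'evy exponent of $\eta$ from $\mathbb{E}[e^{\lambda\eta(1)}] = e^{\lambda + \psi(\lambda t)}$, giving $\Psi(\lambda) = \lambda + \psi(\lambda t)$ for $\lambda \ge 0$. An optional-stopping argument on the exponential martingale $\exp(\lambda\eta(z) - z\Psi(\lambda))$ at the first-passage time, combined with the absence of overshoot for spectrally negative processes, shows that the Laplace exponent $\Theta(t, \cdot)$ of the first-passage subordinator is the functional inverse of $\Psi$. Up to the paper's sign conventions this is exactly the identity $\psi(t\Theta(t, s)) + \Theta(t, s) = s$.

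Finally, the Lax-Oleinik formula $\rho(x, t) = (x - y(x, t))/t$ gives
\[
\rho(x, t) - \rho(0, t) = \frac{x}{t} - \frac{y(x, t) - y(0, t)}{t},
\]
a deterministic linear drift minus a subordinator, hence a L\'evy process in $x$. Its Laplace exponent $\psi(t, q)$ is an explicit affine function of $\Theta(t, \cdot)$; substituting this expression into the functional identity yields an implicit equation relating $\psi(t, q)$ to the original L\'evy exponent $\psi$. Differentiating this implicit equation in $t$ and $q$ and eliminating the common factor $\psi'(\cdot)$ produces the Burgers equation $\psi_t + \psi\psi_q = 0$. The principal obstacle I foresee is the identification in the second paragraph: the downward jumps of $\eta$ allow the Hopf-Lax functional to possess multiple local minima, so pinning down the rightmost global minimizer as the first creeping upcrossing of $\hat\eta$ requires a delicate argument that exploits both the specific quadratic structure of the cost and the absence of upward jumps of $\eta$.
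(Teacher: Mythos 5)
The paper does not prove this theorem; it is quoted, with attribution, from Bertoin (1998) as background, so there is no in-paper proof to compare your sketch against. Judging it on its own merits, there is a genuine gap at exactly the step you flag as the ``principal obstacle,'' and the problem is not merely one of delicacy: the pathwise claim is false. The rightmost global minimiser of the shifted Hopf--Lax cost $y \mapsto \int_0^y (\hat{\eta}(z) - x)\,dz$ is in general strictly larger than the first passage time $\hat{T}_x := \inf\{z \ge 0 : \hat{\eta}(z) > x\}$, even after recentring. The recentring at $y(0,t)$ does give you $\int_0^y \hat{\eta}(z)\,dz > 0$ for all $y > 0$ (because $y(0,t)$ is the rightmost minimiser at level zero), but this is not strong enough. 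For a concrete obstruction take $t=1$ and a compound-Poisson-with-drift path with $\hat{\eta}(z) = 3z$ on $[0,1)$, a single downward jump of $-5.9$ at $z=1$, and $\hat{\eta}(z) = 3z-5.9$ thereafter: then $\int_0^y \hat{\eta}\,dz > 0$ for every $y>0$, yet for $x = \frac{1}{2}$ one finds $\int_{1/6}^{6.4/3}(\hat{\eta}(z) - \frac{1}{2})\,dz \approx -0.89 < 0$, so the global minimum of the cost sits at $6.4/3 \approx 2.13$ rather than at $\hat{T}_{1/2} = 1/6$. A single large downward jump after the first creeping upcrossing breaks the identification on an event of positive probability.

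This gap also sinks the independence argument, which was to follow from the same pathwise identification (and, as you correctly note, $y(0,t)$ is not a stopping time, so the strong Markov property is not available directly). Bertoin's conclusion is genuinely and only distributional: path by path, the closed range of the Lagrangian $y(\cdot,t)$ is the contact set of the greatest convex minorant of $\int_0^\cdot \eta$, which is \emph{not} the ladder set of $\eta$; the two sets merely have the same law, a fact obtained by a duality/time-reversal argument for the L\'evy process $\eta$ rather than by a pathwise formula, and that is where all the work is. The remaining steps of your sketch are fine conditional on the subordinator identification: the exponent $\Psi(\lambda) = \lambda + \psi(t\lambda)$, the inversion $\Theta(t,\cdot) = \Psi^{-1}$ via the exponential martingale together with absence of overshoot for spectrally negative processes, and the derivation of $\psi_t + \psi\psi_q = 0$ by implicit differentiation are all standard and correct.
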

\begin{remark}
This theorem is remarkable in the sense that it provides an infinite-dimensional, nonlinear dynamical system which preserves the independence and homogeneity properties of its random initial configuration. Moreover, it was observed in \cite{menonpego} that the evolution according to Burgers equation of the Laplace exponents in \eqref{laplace-exp} corresponds to a Smoluchowski coagulation equation \cite{smoluchowski} with additive rate which determines the jump statistics. This connection is simply due to the L\'evy-Khintchine representation of Laplace exponents.
\end{remark}

\subsection{Scalar conservation law with general Hamiltonian $H$ }\leavevmode \\

A natural question that arises is if the previous phenomenon (the entropy solution at later times having a simple form that can be explicitly described) is intrinsic to the Burgers equation or if the same holds for scalar conservation laws with general Hamiltonians $H$. In an attempt to answer this question, Menon and Srinivasan in \cite{menon10} proved that when the initial condition $\xi$ is a spectrally positive strong Markov process, then the entropy solution of \eqref{scalar1} at later times remains Markov and spectrally positive. However, it is not as clear whether the Feller property is preserved through time. The following conjecture was stated in that paper, together with different heuristic but convincing ways to see why that must be true.

\begin{conjecture}\label{conjec}
If the initial data $\xi$ of the scalar conservation law in \eqref{scalar1} is either
\begin{enumerate}
\item{A \textit{white noise} derived from a spectrally positive L\'evy process.}
\item{A stationary spectrally positive Feller process with bounded variation.}
\end{enumerate} 
then the solution $\rho(\cdot,t)$ for any fixed time $t>0$ is a stationary spectrally positive Feller process with bounded variation. Moreover, its jump kernel and drift verify an integro-differential equation.  
\end{conjecture}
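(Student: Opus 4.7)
The plan is to reduce the conjecture to statements about the rightmost maximizer process $y(\cdot,t)$ via the Lax--Oleinik formula $\rho(x,t) = L'((y(x,t)-x)/t)$. Since $L'$ is a monotone smooth bijection of $\mathbb{R}$ (because $H$ is $C^2$, strictly convex with superlinear growth), the qualitative properties of $\rho(\cdot,t)$---stationarity, spectral positivity, Feller property, bounded variation---will follow from the corresponding properties of the slope process $(y(x,t)-x)_{x \in \mathbb{R}}$. Stationarity is immediate from the translation invariance of white noise in case (1) and from stationarity of the Feller process $\xi$ in case (2).

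Next I would analyze the one-point law of the maximizer. By a change of variable, $y(x,t)-x$ is the rightmost $\mathrm{argmax}$ of $z \mapsto U_0(x+z) - tL(z/t)$, a two-sided Lévy or Feller path minus a strictly convex function of superlinear growth. Its law is exactly the generalized Chernoff distribution announced in the abstract: one identifies the argmax with the unique point where $U_0$ touches, from below, the family of shifted curves $\{tL(({}\cdot - x)/t) + c : c \in \mathbb{R}\}$, and applies excursion theory for $U_0$ below this envelope together with the strong Markov property of $U_0$ at the touching point to get an explicit formula.

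To obtain the Markov property in $x$, I would exploit the following decoupling: conditional on $y(x,t)$ and on the value $U_0(y(x,t))$, the path $(U_0(y(x,t)+s) - U_0(y(x,t)))_{s \geq 0}$ is independent of the past by the strong Markov property of the initial data. Standard monotonicity of backward Lagrangians gives $y(x',t) \geq y(x,t)$ for $x' > x$, so the determination of $y(x',t)$ uses only this future path against a shifted convex obstacle, exhibiting $\rho(\cdot,t)$ as Markov. Spectral positivity then comes from the fact that only upward jumps of $U_0$ can force $y(\cdot,t)$ to jump forward as $x$ increases, which translates via $L'$ to only upward jumps of $\rho$. Between shocks, differentiating the Hopf--Lax envelope condition gives the smooth ODE $y(x,t)-x = tH'(\rho(x,t))$, yielding the deterministic drift.

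The main obstacle, and the heart of the argument, is writing down the jump kernel explicitly and verifying that it satisfies an integro-differential equation. For this I would compute the excursion intensity of $U_0$ below the moving convex envelope as the center $x$ varies: entrance laws are given by the generalized Chernoff distribution applied to the two sides of a touching point, and the rate at which an excursion becomes the new global maximizer gives the jump rate as $x$ increases. The integro-differential equation for $(n,\text{drift})$ is then obtained by writing the Kolmogorov forward equation for the stationary law of $\rho(\cdot,t)$, or equivalently by balancing local inflow and outflow between $x$ and $x+dx$. A secondary difficulty in case (1) is excluding accumulation of shocks at a fixed time $t>0$; this is what the abruptness hypothesis on the underlying Lévy process is designed to handle, and I would address it separately using path properties of abrupt Lévy processes combined with the excursion-theoretic description above.
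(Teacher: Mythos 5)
The statement you are attempting to prove is stated in the paper as a \emph{conjecture}, and the paper does \emph{not} prove it in the generality asserted. The paper proves only a special case: Theorem \ref{scalar} establishes part (1) for Brownian white noise initial data, and the authors explicitly state that the Feller property, which lies at the heart of the conjecture, ``is not as clear whether [it] is preserved through time.'' A variant of part (2), under restrictive extra hypotheses, is attributed to Kaspar and Rezakhanlou \cite{rezakhkaspar}. So comparing your proposal ``against the paper's own proof'' is only partially meaningful: there is no proof of the full statement to compare against.

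That said, your outline does track the machinery the paper uses for the Brownian case: Lax--Oleinik reduction to the rightmost argmax process $\Psi^{\phi}$, Millar-type path decomposition at the ultimate maximum to get the Markov property (Theorem \ref{psi}), monotonicity of the backward Lagrangian to get spectral positivity, and the ODE $y(x,t)-x=tH'(\rho(x,t))$ for the drift between shocks. Those pieces are sound and are what the paper does.

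The genuine gaps are in exactly the places where the paper itself stops short. First, you assert that the Feller property of $\rho(\cdot,t)$ ``will follow from the corresponding properties of the slope process,'' but this begs the question --- the Markov property of the slope process was already established by Menon and Srinivasan, and the Feller property \emph{is} the open content. In the paper this is only obtained for Brownian $U_0$ via hard analytic work: proving smoothness and integrability of the killed transition density $f(s,x,t,y)$ (Theorem \ref{representation} and its sequel, a full Duhamel fixed-point argument), deriving the closed form of the jump kernel $K^\phi$ via Girsanov and Bessel bridges (Proposition \ref{kernel}), and of the harmonic function $j(s)$ (Theorem \ref{jou}). Your proposal names no mechanism that would replace these estimates for a general spectrally positive L\'evy white noise, or for a stationary Feller process with bounded variation where even the Millar decomposition is more delicate because increments are not independent. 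Second, you claim the integro-differential equation ``is then obtained by writing the Kolmogorov forward equation for the stationary law,'' but the paper explicitly says that verifying the kinetic equation \eqref{kinetic} from the explicit kernel \eqref{kernel-formula} ``seems also inaccessible at the present due to the complicated term involving the Brownian excursion.'' Simply invoking a forward equation does not bridge that gap. Third, you fold the abruptness hypothesis into this argument, but that hypothesis belongs to the separate Theorem \ref{discrete} on the discreteness of the shock set, not to the conjecture; discreteness of shocks is not part of the conjecture's conclusion, and abruptness excludes many of the spectrally positive L\'evy processes that the conjecture is supposed to cover. Your sketch is a reasonable description of the strategy, but as a proof of the conjecture it is missing the analytic heart of the argument and overstates what is currently known.
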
 

\begin{remark}
By a result of Courr\`ege (see \cite{applebaum}[Theorem 3.5.3]), the generator $\mathcal{A}$ of any spectrally positive Feller process with bounded variation takes the form
\begin{align*}
\mathcal{A} \varphi(y)=b(y)\varphi'(y)+\int_{y}^{\infty} (\varphi(z)-\varphi(y))n(y,dz)
\end{align*}
given that $C^{\infty}_{c}(\mathbb{R}) \subset \mathcal{D}(\mathcal{A})$ ($C^{\infty}_{c}(\mathbb{R})$ is the space of infinitely differentiable functions with compact support and $\mathcal{D}(\mathcal{A})$ is the domain of the generator $\mathcal{A}$). Moreover the kernel $n$ verifies the integrability condition : $\int_{y}^{\infty} (1 \wedge \vert y-z \vert^2)n(y,dz)<\infty$.
\end{remark}

A variant\footnote{Under some mild conditions on the Hamiltonian $H$, and a slight modification of the nature of the initial data.} of the second part of this conjecture when the initial data is a piecewise-deterministic spectrally positive Feller process was recently proved by Kaspar and Rezakhanlou in \cite{rezakhkaspar1} and \cite{rezakhkaspar}. We give below an explicit exposition of their result together with the exact form of the integro-differential equation verified by the drift and the jump kernel. This equation \eqref{kinetic} was formally derived by Menon and Srinivasan in \cite{menon10} and shown to be equivalent to the following Lax equation 
\begin{equation*}
\partial_t \mathcal{A}=[\mathcal{A},\mathcal{B}]=\mathcal{A}\mathcal{B}-\mathcal{B}\mathcal{A}
\end{equation*}
where $\mathcal{A}^{t}$ is the generator of $x \mapsto \rho(x,t)$ and $\mathcal{B}^{t}$ is the generator of $t \mapsto \rho(x,t)$. We give explicit formulas for these generators below in the statement of Theorem \ref{rezakh}.  \\

\begin{notation}
We write $\mathcal{M}_1$ for the set of probability measures on the real line, and
\begin{align*}
[H]_{y,z}=\frac{H(y)-H(z)}{y-z} \text{ for } y \ne z
\end{align*}
\end{notation}
\begin{thm}[Kaspar and Rezakhanlou 20, \cite{rezakhkaspar}]\label{rezakh}
Assume that the initial data $\rho^0=\rho^0(x)$ is  zero of $x<0$ and is a Markov process for $x \ge 0$ that starts at $\rho^{0}(0)=0$. More precisely, its infinitesimal generator $\mathcal{A}^0$ has the form
\begin{align*}
\mathcal{A}^{0}\varphi(\rho_{-})=b^{0}(\rho_{-})\varphi'(\rho_{-}) +\int_{\rho_{-}}^{\infty} (\varphi(\rho_{+})-\varphi(\rho_{-}))f^{0}(\rho_{-},\rho_{+})d\rho_{+}
\end{align*}
Furthermore, assume that 
\begin{enumerate}
\item{The rate kernel $f^{0}(p{-},p_{+})$ is $C^1$ and is supported on 
\begin{align*}
\{ (p_{-},p_{+}) : P_{-} \le p_{-} \le p_{+} \le P_{+} \}
\end{align*}
for some constants $P_{\pm}$.}
\item{The Hamiltonian function $H : [P_{-},P_{+}] \to \mathbb{R}$ is $C^2$, convex, has positive right-derivative at $p=P_{-}$ and finite left-derivative at $p=P_{+}$.}
\item{The initial drift $b^0$ is $C^1$ and satisfies $b^0 \le 0$ with $b^0(\rho)=0$ whenever $\rho \notin [P_{-},P_{+}]$.}
\end{enumerate}
Then for each fixed $t>0$, the process $x\mapsto \rho(x,t)$ (where $\rho$ is a solution of \eqref{scalar1}) has $x=0$ marginal given by $\ell^{0}(d\rho_0,t)$ where $\ell^{0}: [0,\infty) \to \mathcal{M}_1$ is the unique function such that $\ell^{0}(d\rho,0)=\delta_0(d\rho)$ and 
\begin{align*}
\frac{d\ell^{0}(d\rho,t)}{dt}=(\mathcal{B}^{t*} \ell^{0}(\cdot,t))(d\rho,t)
\end{align*}
where $\mathcal{B}^{t*}$ is the adjoint operator of $\mathcal{B}^{t}$, that acts on measures with 
\begin{align*}
\mathcal{B}^{t}\varphi(\rho_{-})=-H'(\rho_{-})b(\rho_{-},t)\varphi'(\rho_{-})-\int_{\rho_{-}}^{\infty} [H]_{\rho_{-},\rho_{+}}(\varphi(\rho_{+})-\varphi(\rho_{-}))f(\rho_{-},\rho_{+},t)d\rho_{+}
\end{align*} 
for any test function $\varphi$. Moreover the process $x\mapsto \rho(x,t)$ evolves for $0<x<\infty$ according to a Markov process with generator $\mathcal{A}^{t}$ given by 
\begin{align*}
\mathcal{A}^{t}\varphi(\rho_{-})=b(\rho_{-},t)\varphi'(\rho_{-}) +\int_{\rho_{-}}^{\infty} (\varphi(\rho_{+})-\varphi(\rho_{-}))f(\rho_{-},\rho_{+},t)d\rho_{+}
\end{align*}
Here $b$ and $f$ are obtained from their initial conditions 
\begin{align*}
b(\rho,0)=b^{0}(\rho), ~~~~ f(\rho_{-},\rho_{+},0)=f^{0}(\rho_{-},\rho_{+})
\end{align*}
$b$ solves the ODE with parameter 
\begin{equation*}
\partial_t b(\rho,t)=H''(\rho)b(\rho,t)^2
\end{equation*}
and $f$ solves the following Boltzmann-like kinetic equation 
\begin{align}\label{kinetic}
\partial_t f(\rho_{-},\rho_{-},t)=
Q(f,f)+C(f)+\partial_{\rho_{-}} (fV_{\rho_{-}}(\rho_{-},\rho_{+},t))+\partial_{\rho_{+}}(fV_{\rho_{+}}(\rho_{-},\rho_{+},t))
\end{align}
where the velocities $V_{\rho_{-}}$ and $V_{\rho_{+}}$ are given by 
\begin{align*}
V_{\rho_{-}}(\rho_{-},\rho_{+},t)=([H]_{\rho_{-},\rho_{+}}-H'(\rho_{-}))b(\rho_{-},t)\\
V_{\rho_{+}}(\rho_{-},\rho_{+},t)=([H]_{\rho_{-},\rho_{+}}-H'(\rho_{+}))b(\rho_{+},t)
\end{align*}
the coagulation-like collision kernel $Q$ is 
\begin{align*}
Q(f,f)(\rho_{-},\rho_{+},t)=\int_{\rho_{-}}^{\rho_{+}}([H]_{\rho_{*},\rho_{+}}-[H]_{\rho_{-},\rho_{*}})f(\rho_{-},\rho_{*},t)f(\rho_{*},\rho_{+},t)d\rho_{*} \\
-\int_{\rho_{+}}^{\infty}([H]_{\rho_{-},\rho_{+}}-[H]_{\rho_{+},\rho_{*}})f(\rho_{-},\rho_{+},t)f(\rho_{+},\rho_{*},t)d\rho_{*} \\
-\int_{\rho_{-}}^{\infty} ([H]_{\rho_{-},\rho_{*}}-[H]_{\rho_{-},\rho_{+}})f(\rho_{-},\rho_{+},t)f(\rho_{-},\rho_{*},t) d\rho_{*}
\end{align*}
and the linear operator $C$ is given by
\begin{align*}
C(f)(\rho_{-},\rho{+})=f(\rho_{-},\rho_{+})(b(\rho_{-},t)H''(\rho_{-})-([H]_{\rho_{-},\rho_{+}}-H'(\rho_{-}))\partial_{\rho_{-}}b(\rho_{-},t))
\end{align*}
\end{thm}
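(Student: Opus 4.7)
My overall plan is to work with the particle-system picture of the initial profile: view $\rho^0$ on $[0, \infty)$ as a piecewise-deterministic trajectory that decreases smoothly along the drift ODE $d\rho/dx = b^0(\rho) \le 0$ and makes upward jumps with rate density $f^0(\rho_-, \rho_+)$. Under \eqref{scalar1}, upward jumps are admissible Lax shocks (by convexity of $H$ and spectral positivity), and they travel in the $x$-direction with Rankine--Hugoniot velocity $-[H]_{\rho_-, \rho_+}$, while smooth pieces are transported along the characteristics $dx/dt = -H'(\rho)$, with $\rho$ constant along each characteristic. I would first establish the result for initial configurations with finitely many jumps on a bounded $x$-interval (where the post-evolution configuration can be written down explicitly in terms of shock trajectories and characteristic fans) and then pass to the general hypothesis by an approximation argument, using the compact support of $f^0$ in $[P_-, P_+]$ to keep the range of $\rho$ uniformly bounded.

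\textbf{Evolution of the drift.} The ODE for $b$ follows from pure characteristic geometry. In a smooth region $\rho(x,t) = \rho^0(y)$ with $x = y - t H'(\rho^0(y))$, so differentiating in $y$ and inverting gives
\[
b(\rho, t) = \partial_x \rho(x, t) = \frac{b^0(\rho)}{1 - t H''(\rho) b^0(\rho)},
\]
which immediately verifies $\partial_t b = H''(\rho) b^2$ and, combined with $b^0 \le 0$, shows $b(\cdot, t) \le 0$ for all $t$.

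\textbf{Kinetic equation for $f$.} This is the technical heart. I would compute the change in $f(\rho_-, \rho_+, t)$ over an infinitesimal step $\Delta t$ and classify the contributing events. The coagulation kernel $Q(f,f)$ arises from binary shock collisions: a $(\rho_-, \rho_*)$-shock overtakes an adjacent $(\rho_*, \rho_+)$-shock at relative speed $[H]_{\rho_*, \rho_+} - [H]_{\rho_-, \rho_*}$ whenever this is positive, producing a $(\rho_-, \rho_+)$-shock; the infinitesimal independence coming from the spatial Markov property justifies factorising the two-shock joint density as $f(\rho_-, \rho_*) f(\rho_*, \rho_+)$, and the gain together with the two symmetric loss contributions (collisions that annihilate a $(\rho_-, \rho_+)$-shock against a shock to its right or left) assemble into $Q$. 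The transport terms $\partial_{\rho_\pm}(f V_{\rho_\pm})$ arise because the smooth arcs adjacent to a shock are advected along characteristics with velocity $-H'(\rho_\pm)$ while the shock itself moves with velocity $-[H]_{\rho_-, \rho_+}$, so in the shock's frame the boundary values $\rho_\pm$ slide along the profile at rate $V_{\rho_\pm} = ([H]_{\rho_-, \rho_+} - H'(\rho_\pm)) b(\rho_\pm, t)$. Finally, $C(f)$ accounts for the Jacobian of the characteristic map acting on the rate per unit $x$ at which new shocks are seen, coupled with the evolution of $b$ via $\partial_t b = H''(\rho) b^2$.

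\textbf{Markov property and the main obstacle.} The spatial Markov property at time $t$ rests on the monotonicity of the backward Lagrangian $x \mapsto y(x, t)$: given the value $\rho(x_0, t)$, the profiles on $(-\infty, x_0)$ and $(x_0, \infty)$ depend on disjoint pieces of the initial data, and the Markov property of $\rho^0$ at the point $y(x_0, t)$ passes through the Lax--Oleinik map to yield conditional independence at $x_0$. To make this rigorous I would first treat initial data with finitely many jumps (where shock trajectories are piecewise linear and all collision times can be enumerated), then take a limit using tightness and the $C^1$/compact-support assumptions to identify the limiting generator. The main obstacle is the infinitesimal bookkeeping in the previous paragraph: one must show that the probability of two or more shock events in $[t, t+\Delta t]$ is $o(\Delta t)$ uniformly, that no extra contribution arises from newly born shocks (rarefaction arcs being overtaken), and that the derivation closes as an equation for $f$ alone rather than for higher-order correlations. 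Propagating the $C^1$ regularity of $f(\cdot, \cdot, t)$ in time, needed to interpret \eqref{kinetic} classically, is also delicate and crucially exploits the confinement of $\rho$ to $[P_-, P_+]$ together with the finite-derivative assumptions on $H$ at the endpoints.
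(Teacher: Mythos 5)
This theorem is quoted from Kaspar and Rezakhanlou (\cite{rezakhkaspar1}, \cite{rezakhkaspar}), not proved in the present paper, so there is no in-paper argument to compare your sketch against. Judged on its own, your outline is consistent with the cited source's strategy: a piecewise-deterministic particle picture with downward drift and upward jumps, Lax--Oleinik/characteristic dynamics turning jumps into Rankine--Hugoniot shocks, an explicit collision bookkeeping on configurations with finitely many active shocks, and a limiting argument exploiting the compact range $[P_-,P_+]$ and the $C^1$ rate kernel. Your characteristic computation of the drift is correct: from $x=y-tH'(\rho^0(y))$ one gets $b(\rho,t)=b^0(\rho)/(1-tH''(\rho)b^0(\rho))$, which solves $\partial_t b=H''b^2$; moreover the denominator is $\ge 1$ since $H''\ge 0$ and $b^0\le 0$, so there is no finite-time blowup and $b(\cdot,t)\le 0$ is preserved. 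The identification of upward jumps as admissible Lax shocks moving at the Rankine--Hugoniot speed $-[H]_{\rho_-,\rho_+}$ for $\rho_t=H(\rho)_x$, the reading of $Q$ as a binary coagulation kernel, and the reading of $\partial_{\rho_\pm}(fV_{\rho_\pm})$ as transport of the boundary data at the relative characteristic--shock speed are all correct in sign and interpretation.

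The genuine caveat is that the steps you set aside as ``obstacles'' -- the uniform $o(\Delta t)$ bound ruling out multiple interactions, closure of the hierarchy at the level of the pair density $f$ rather than higher-order correlations, propagation of $C^1$ regularity of $f(\cdot,\cdot,t)$, well-posedness and uniqueness of the marginal flow $\ell^0$, and the passage from finite to infinite configurations -- are not peripheral technicalities but the substance of the two Kaspar--Rezakhanlou papers, and your description of $C(f)$ as ``the Jacobian of the characteristic map acting on the rate per unit $x$'' is too loose to be checked as stated. As a roadmap, the sketch reflects the approach of the cited work; as a proof, it does not yet engage the parts where the real difficulty lies.
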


The purpose of this paper is to prove the first part of the conjecture when the initial data $\xi$ is a Brownian \textit{white noise} and thus extend the results of Groeneboom \cite{groeneboom89} in the Burgers case. We show that at any fixed time $t>0$, the solution $\rho(\cdot,t)$ is a stationary piecewise-smooth Feller process and we give an explicit description of its generator. This result proves the complete integrability of scalar conservation laws for this class of initial data and moves away from the unnatural emphasis on Burgers equation. Our method as will be seen by the reader can be extended when the white noise is derived from a spectrally positive L\'evy process with non-zero Brownian exponent. Our shortcoming in this case will be not having explicit formulas for the jump kernel. We also show that the structure of shocks of Burgers turbulence holds for the general scalar conservation law under the assumption of rough initial data.\\

Since the entropy solution is expressed via the Lax-Oleinik formula. It is natural to study the law of the process $\Psi^{\phi}$ defined as 

\begin{equation}\label{psi}
\Psi^{\phi}(x)=\sup \left\{ y \in \mathbb{R} : U_0(y)-\phi(y-x) =\max\limits_{z \in \mathbb{R}} \left( U_0(z)-\phi(z-x) \right) \right\} ,~~ x \in \mathbb{R}
\end{equation}

where $U_0$ is a spectrally positive L\'evy process and $\phi$ is a $C^2$ strictly convex function with superlinear growth, such that $U_0(y)=o(\phi(y))$\footnote{We write $f=o(g)$ if $\lim \frac{f}{g}=0$ and $f=O(g)$ if $\frac{f}{g}$ is bounded.} for $\vert y \vert \to \infty$. The relationship between the process $\Psi^{\phi}$ and the entropy solution $\rho(\cdot,t)$ of \eqref{scalar1} is the following 
\begin{align*}
\rho(x,t)=L'\left(\frac{\Psi^{tL(\frac{\cdot}{t})}(x)-x}{t}\right)
\end{align*}\\

Our paper is organized as follows

\begin{enumerate}
\item{In Section \ref{prelim}, we give some preliminary results on the process $\Psi^{\phi}$ when $U_0$ is a spectrally positive L\'evy process such as its Markovian property.}
\item{In Section \ref{sect3}, we will focus on the case where $U_0$ is a two-sided Brownian motion and show that the process $\Psi^{\phi}$ is pure jump, following similar ideas used by Groeneboom in \cite{groeneboom89}. The main ingredient being the path decomposition of Markov processes when they reach their ultimate maximum. This result implies that the Brownian motion $U_0$ has excursions below the sequence of convex functions $(x \mapsto \phi(x-x_n))_{ n \in \mathbb{N}}$ where $(x_n)_{n \in \mathbb{N}}$ are the jump times of the process $\Psi^{\phi}$ (which is a discrete set by a result of Section \ref{sect5}). However, the justification of many manipulations used in \cite{groeneboom89} rely on the regularity and asymptotic properties of Airy functions at infinity, as those arise naturally in the expressions of transition densities used throughout the study of the Brownian motion with parabolic drift. Unfortunately, those special functions are intrinsic to this special case as we will explain later, and one do not have similar expressions in the general case.}
\item{In Section \ref{sect4}, we circumvent this difficulty by using a more analytic approach to prove the smoothness and integrability of the densities that were used in Section \ref{sect3}. Moreover, via Girsanov theorem we manage to express explicitly the jump kernel of the process $\Psi^{\phi}$ in terms of the distribution of Brownian excursion areas. Along the way, we find the joint density of the maximum and its location of the process $(W(z)-\phi(z))_{z \in \mathbb{R}}$ where $W$ is a two-sided Brownian motion. In particular, the density of $\text{argmax}_{z\in \mathbb{R}} (W(z)-\phi(z))$ enjoys a simple expression similar to Chernoff distribution for the parabolic drift.}
\item{ Finally, in Section \ref{sect5} we give a sufficient condition on the L\'evy process $U_0$ for the process $\Psi^{\phi}$ to have discrete range (with the convention that a set is discrete if it is countable with no accumulation points). As a consequence, this implies that the structure of shocks of the entropy solution $\rho(\cdot,t)$ is discrete for any time $t>0$ when the initial data belongs to the large class of \textit{abrupt} L\'evy processes introduced by Vigon in \cite{vigon}, this result generalizes the findings of Bertoin \cite{bertoin-shocks} and Abramson \cite{abramson} when $U_0$ is spectrally positive.}\\
\end{enumerate}

We give here our main results \\

\begin{thm}\label{scalar}
Suppose that the initial potential $U_0$ is a two-sided Brownian motion and let $\rho$ be the solution of the scalar conservation law $\rho_t=(H(\rho))_x$. Then for every fixed $t>0$, the process $x \mapsto \rho(x,t)$ is a stationary piecewise-smooth Feller process. Its generator is given by 
\begin{align*}
\mathcal{A}^{t}\varphi(\rho_{-})=-\frac{\varphi'(\rho_{-})}{tH''(\rho_{-})}+\int_{\rho_{-}}^{\infty} (\varphi(\rho_{+})-\varphi(\rho_{-})) n(\rho_{-},\rho_{+},t)d\rho_{+}
\end{align*}
for any test function $\varphi \in C_{c}^{\infty}(\mathbb{R})$, where 
\begin{equation}\label{kernel-formula}
n(\rho_{-},\rho_{+},t)=\frac{(\rho_{+}-\rho_{-})K(\rho_{-},\rho_{+},t)}{\sqrt{2\pi t (H'(\rho_{+})-H'(\rho_{-}))^3}}
 \frac{\rho_{+}+\int_{\rho_{+}}^{\infty} \frac{H''(\rho)-K(\rho_{+},\rho,t)}{\sqrt{2\pi t (H'(\rho)-H'(\rho_{+}))^3}}d\rho}{\rho_{-}+\int_{\rho_{-}}^{\infty} \frac{H''(\rho)-K(\rho_{-},\rho,t)}{\sqrt{2\pi t (H'(\rho)-H'(\rho_{-}))^3}}d\rho}
\end{equation}
for $\rho_{-}<\rho_{+}$, and 
\begin{align*}
K(\rho_{-},\rho_{+},t)=H''(\rho_{+})
\text{exp} \left (-\frac{t}{2}  \int_{\rho_{-}}^{\rho_{+}} \rho_{*}^2 H''(\rho_{*})d\rho_{*} \right) \times \\
\mathbb{E}\left[\text{exp} \left(-\int_{\rho_{-}}^{\rho_{+}} \textbf{e}(tH'(\rho_{*}))d\rho_{*} \right)\right]
\end{align*}
where $\textbf{e}$ is a Brownian excursion on the interval $[tH'(\rho_{-}),tH'(\rho_{+})]$. 
\end{thm}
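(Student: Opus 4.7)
The plan is to reduce Theorem~\ref{scalar} to the study of the process $\Psi^{\phi}$ defined in \eqref{psi} with $\phi(u)=tL(u/t)$, and then to transport the resulting structure through the Lax-Oleinik identity
\[
\rho(x,t) \;=\; L'\!\left(\frac{\Psi^{\phi}(x)-x}{t}\right).
\]
Stationarity of $x\mapsto \rho(x,t)$ follows from the substitution $y=x+z$ in \eqref{psi}, which rewrites $\Psi^{\phi}(x)-x$ as the argmax in $z$ of $z\mapsto (U_{0}(x+z)-U_{0}(x))-\phi(z)$; translation invariance of two-sided Brownian motion then shows that this argmax has a law independent of $x$. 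The Markov property for $\Psi^{\phi}$ is established in Section~\ref{prelim} and is inherited by $\rho(\cdot,t)$.

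Next I would invoke the excursion-theoretic decomposition of Section~\ref{sect3}, combined with the discreteness criterion of Section~\ref{sect5} (which applies to Brownian motion since it is abrupt in the sense of Vigon), to show that $\Psi^{\phi}$ is a.s.\ piecewise constant with only discretely many jumps on every compact interval. On each constancy interval, say $\Psi^{\phi}\equiv y$, one has $\rho(x,t)=L'((y-x)/t)$, and differentiating together with the Legendre-duality identity $L''(q)=1/H''(L'(q))$ yields $d\rho/dx=-1/(tH''(\rho))$, which is exactly the drift coefficient appearing in $\mathcal{A}^{t}$ and already tells us that $\rho(\cdot,t)$ is piecewise smooth.

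For the jump part, a jump of $\Psi^{\phi}$ from $y_{-}$ to $y_{+}$ at position $x_{0}$ occurs precisely when $U_{0}$ admits an excursion strictly below the shared maximum level of $y\mapsto U_{0}(y)-\phi(y-x_{0})$ on $[y_{-},y_{+}]$. Decomposing $U_{0}$ into its past on $(-\infty,y_{-}]$, the excursion on $[y_{-},y_{+}]$, and its future on $[y_{+},\infty)$, then applying Girsanov's theorem to absorb the convex drift $-\phi(\cdot-x_{0})$ onto a standard Brownian excursion, produces (after the change of variables $y=x_{0}+tH'(\rho_{*})$) exactly the factor $K(\rho_{-},\rho_{+},t)/H''(\rho_{+})$ in \eqref{kernel-formula}. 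The rational factor in \eqref{kernel-formula} is the generalized Chernoff-type density ratio derived in Section~\ref{sect4} for the joint law of the location and height of $\max_{z\in\mathbb{R}}(W(z)-\phi(z))$; it quantifies the relative likelihood that $y_{+}$ versus $y_{-}$ is the global argmax given the paths to the left of $y_{-}$ and right of $y_{+}$, and plays the role of $J(z)/J(y)$ in Theorem~\ref{groeneboom}. Finally, converting the jump intensity from $dy_{+}$ to $d\rho_{+}$ contributes the Jacobian $tH''(\rho_{+})$, which combines with the $\sqrt{2\pi t(H'(\rho_{+})-H'(\rho_{-}))^{3}}$ scale of the excursion length $y_{+}-y_{-}=t(H'(\rho_{+})-H'(\rho_{-}))$ to reproduce the stated formula.

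The main obstacle is the analytic justification of the Girsanov/excursion computation in the absence of closed-form special functions. Groeneboom's argument in the Burgers setting is supported throughout by explicit Airy-function transition densities, which are intrinsic to the quadratic choice of $\phi$; for a general $C^{2}$ strictly convex $\phi$ one must instead establish smoothness and integrability of the joint density of the location and height of $\max_{z}(W(z)-\phi(z))$ by abstract analytic arguments (the content of Section~\ref{sect4}), and verify the positivity and finiteness of both the numerator and the denominator in \eqref{kernel-formula}. Once $n(\rho_{-},\rho_{+},t)$ is shown to be continuous in its arguments and locally integrable in $\rho_{+}$, the Feller property of the semigroup will follow from the standard criterion that $\mathcal{A}^{t}$ maps $C_{c}^{\infty}(\mathbb{R})$ into $C_{0}(\mathbb{R})$, completing the proof.
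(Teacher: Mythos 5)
Your proposal follows the paper's own route at the level of structure: reduce to $\Psi^{\phi}$ with $\phi=tL(\cdot/t)$, compute its generator via Millar path decomposition plus Girsanov and Brownian-excursion area functionals (Sections~\ref{sect3}--\ref{sect4}), and then push through the Lax--Oleinik map $\rho=L'((\Psi^{\phi}(x)-x)/t)$ with the Jacobian $dy_{+}=tH''(\rho_{+})\,d\rho_{+}$. Two points where you diverge from the paper's proof are worth noting. First, you obtain the drift $-1/(tH''(\rho))$ by first invoking Section~\ref{sect5} to argue $\Psi^{\phi}$ is piecewise constant on a discrete jump set, then differentiating $\rho(x,t)=L'((y-x)/t)$ on constancy intervals; the paper instead differentiates the generator limit $\lim_{h\to0}h^{-1}\mathbb{E}[\varphi(\rho(h,t))-\varphi(\rho_{-})]$ directly via the chain rule, producing the drift from the deterministic $-h$ shift in $\Psi^{\phi}(h)-h$ without ever needing discreteness. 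This matters because Theorem~\ref{discrete-psi} imposes extra hypotheses ($\lim_{|y|\to\infty}|\phi'(y)|=\infty$, $U_{0}(y)=o(\phi(y))$), so your route smuggles in a conditional assumption that the paper's generator computation avoids. Second, your characterization of the rational factor as a ``generalized Chernoff-type density ratio'' is imprecise: that factor is the Doob $h$-transform ratio $j(z)/j(y)$ from conditioning the post-maximum process to stay below the rolling barrier (the analogue of $J(z)/J(y)$ in Theorem~\ref{groeneboom}), not the Chernoff density $\tfrac{1}{2}j(t)\tilde{j}(-t)$, which is a different quantity built from a product of one-sided hitting probabilities. Beyond these two points the proposal is a correct skeleton, and you correctly locate the analytic crux in Section~\ref{sect4}: establishing smoothness and integrability of the killed transition density $f$ away from $\{t=s\}$ via the Duhamel fixed-point argument (Theorem~\ref{representation}) in the absence of Airy-function asymptotics, and then extracting $K^{\phi}$ and $j$ (Proposition~\ref{kernel}, Theorem~\ref{jou}) from the Girsanov change of measure and the PDE satisfied by $g$.
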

\begin{remark}\leavevmode
\begin{enumerate}
\item{
The profile of the solution at any fixed time $t>0$ is a concatenation of smooth pieces that evolve as solutions of ODEs with vector field (or drift) $b(\rho,t):=-\frac{1}{tH''(\rho)}$ and are interrupted by stochastic upward jumps distributed via the jump kernel $n(\cdot,\cdot,t)$. We prove in Section \ref{sect5} that in the Brownian white noise case, under mild assumptions on the Hamiltonian $H$, the set of jump times is discrete, i.e. : there are only a finite number of jumps on any given compact interval.}
 \item{For any $\epsilon>0$, the profile of $x \mapsto \rho(x,\epsilon)$ is a piecewise-deterministic Markov process and belongs to the class of initial data considered in the second part of the Conjecture \ref{conjec}. A consequence of this observation would be that the kernel $(\rho_{-},\rho_{+},t) \mapsto n(\rho_{-},\rho_{+},t)$ in the expression \eqref{kernel-formula} verifies the kinetic equation \eqref{kinetic}. However, Theorem \ref{rezakh} only considers a variant of the original statement of the conjecture as it forces the initial data to be flat on the negative real-line (whereas here we deal with a stationary process) and restricts the range of $\rho^0$ on a compact interval $[P_{-},P_{+}]$. These technical modifications arise from the very challenging proof of existence and uniqueness of a classical solution to \eqref{kinetic} under general assumptions. Verifying that the kernel $n$ in the Brownian white noise case is a solution to the kinetic equation \eqref{kinetic} from the explicit expression \eqref{kernel-formula} seems also inaccessible at the present due to the complicated term involving the Brownian excursion. This verification was done for the Burgers case by Menon and Srinivasan in \cite{menon10}[Section 6] through many non-trivial calculations, but relied extensively on the connection with Airy functions and an associated Painlev\'e property. }
 \end{enumerate}
\end{remark}

The following result is a consequence of our study of the process $\Psi^{\phi}$. It gives an explicit formula for the density of the random variable $\text{argmax}_{\omega \in \mathbb{R}} (W(\omega)-\phi(\omega))$ where $W$ is a two-sided Brownian motion. From results of Section \ref{sect4}, we also have access to the joint distribution of 
\begin{align*}
 (\text{argmax}_{\omega \in \mathbb{R}} (W(\omega)-\phi(\omega)),\max_{\omega \in \mathbb{R}} (W(\omega)-\phi(\omega)))
 \end{align*}
 but we omit it here because the expression is quite large. 

\begin{thm}\label{chernoff}
Let $\omega_M$ be the location of the maximum of the process $(S(\omega)=W(\omega)-\phi(\omega))_{\omega \in \mathbb{R}}$ where $W$ is a two-sided Brownian motion, its density is equal to
\begin{align*}
\frac{\mathbb{P}[\omega_M \in dt]}{dt}=\frac{1}{2}f^{\phi}(t)f^{\phi(-\cdot)}(-t) 
\end{align*}
for any $t \in \mathbb{R}$, and where 
\begin{align*}
f^{\phi}(t)=\phi'(t)+\int_{0}^{\infty} \frac{1-p^{\phi}(t,u)}{\sqrt{2\pi u^3}} du
\end{align*}
with 
\begin{align*}
p^{\phi}(t,u)=\text{exp} \left (-\frac{1}{2} \int_{t}^{t+u} \phi'(z)^2 dz \right ) \mathbb{E}\left[\text{exp}\left (-\int_{t}^{t+u} \phi''(z) \textbf{e}(z)dz \right)\right]\text{ for } u>0
\end{align*}
where $\textbf{e}$ is a Brownian excursion on $[t,t+u]$.
\end{thm}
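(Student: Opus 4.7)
Plan: My strategy is to derive Theorem~\ref{chernoff} from the joint density of $(\omega_M, M) = (\text{argmax}\,(W - \phi), \max\,(W - \phi))$ alluded to just above the statement, and then integrate out $M$.

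The first step is a Williams-type path decomposition at the ultimate maximum. Strict convexity and superlinear growth of $\phi$ ensure that $Z(\omega) := W(\omega) - \phi(\omega)$ a.s.\ attains a unique maximum at some random $\omega_M$. Conditional on $(\omega_M, M) = (t, m)$, the two reflected fragments $\omega \mapsto m - Z(t + \omega)$ and $\omega \mapsto m - Z(t - \omega)$ on $[0, \infty)$ are independent nonnegative processes, each a Brownian motion with drift $-\phi'$ (resp.\ $-\phi'(-\cdot)$) conditioned never to return to $0$. Using this decomposition, the joint density of $(\omega_M, M)$ factors as $\tfrac12\, h^{\phi}(t, m)\, h^{\phi(-\cdot)}(-t, m)$ for suitable half-densities $h^{\phi}$ that encode the contributions of the two fragments; the factor $\tfrac12$ arises from evaluating the Gaussian transition density of $W$ at $t$ that appears in the decomposition.

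Next I would identify $p^{\phi}(t, u)$ as a Girsanov-weighted Brownian-excursion functional. Cameron--Martin gives the Radon--Nikodym derivative of BM-with-drift-$(-\phi')$ against standard BM, producing the prefactor $\exp(-\tfrac12 \int_t^{t+u} \phi'(z)^2\, dz)$; an integration by parts converts the Itô integral $\int_t^{t+u} \phi'(z)\, dW(z)$ into $\int_t^{t+u} \phi''(z) W(z)\, dz$ (modulo boundary terms that vanish along the excursion), and under the conditioning in the path decomposition $W$ is replaced by a Brownian excursion $\textbf{e}$. Now I integrate $m$ out in $h^{\phi}(t, m)$: the absolutely continuous part (corresponding to $Z$ leaving the maximum along the drift) contributes $\phi'(t)$, while each Itô excursion of length $u$ contributes $(1 - p^{\phi}(t, u))/\sqrt{2\pi u^3}$, using the intensity $1/\sqrt{2\pi u^3}$ of the Itô excursion-length measure. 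Combined with the symmetric contribution from the left fragment, this yields $\tfrac12\, f^{\phi}(t)\, f^{\phi(-\cdot)}(-t)$.

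The main obstacle I expect is justifying the two Girsanov manipulations and the excursion-by-excursion summation outside the parabolic case, where one cannot appeal to Groeneboom's explicit Airy-function formulas. This should rely on the analytic smoothness and integrability results established in Section~\ref{sect4}, together with dominated-convergence arguments to interchange the $m$-integration with the excursion intensity and to control the tails coming from the superlinear growth of $\phi$.
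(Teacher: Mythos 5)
Your overall plan has the right shape and tracks the paper's proof at a high level: both factorize the density of $\omega_M$ by decomposing at the ultimate maximum (the paper via Lemma \ref{joint} for the post-max side, combined with independence and a time-reversal identity for the pre-max side, together with the entrance law of $\tilde S^\downarrow$), both use Girsanov to convert the drift into the quadratic prefactor $\exp(-\tfrac12\int\phi'^2)$ plus the Bessel/excursion functional, and both identify $p^{\phi}$ as the appropriate weighted excursion-area Laplace functional after integration by parts. In that sense you have correctly identified the architecture.

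However there is a real gap at the step that is actually the hardest: the passage from "integrate $m$ out of $h^{\phi}(t,m)$" to the explicit formula $f^{\phi}(t)=\phi'(t)+\int_0^{\infty}(1-p^{\phi}(t,u))/\sqrt{2\pi u^3}\,du$. You justify this by appealing to the It\^o excursion-length intensity $1/\sqrt{2\pi u^3}$ and a vague "absolutely continuous part contributes $\phi'(t)$." This is not a proof. First, $S(\omega)=W(\omega)-\phi(\omega)$ is a time-inhomogeneous diffusion, so there is no stationary It\^o excursion theory to appeal to out of the box; the excursion measure "at time $t$" would itself need to be constructed and its length intensity identified as $1/\sqrt{2\pi u^3}$, which is not automatic once the drift $-\phi'$ is turned on. Second, the quantity $f^{\phi}(t)=-j(t)=-\lim_{x\uparrow 0}\partial_x\mathbb{P}[\tau_0=\infty\mid S(t)=x]$ is a boundary-derivative of a survival probability, not a mass of excursions, and the split into a drift term plus an integral against the excursion-length measure is an output of a genuine computation rather than something one can read off from a decomposition of paths. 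The paper proves exactly this identity in Theorem \ref{jou} by taking the space Laplace transform of $m(s,x,t,y)$, using the PDE \eqref{pde-m}, solving by the method of characteristics, and carefully expanding the singular boundary term near $t=s$ to extract $-2/\sqrt{2\pi\epsilon}+\phi'(s)+O(\sqrt{\epsilon})$, which is then cancelled against the small-$u$ divergence of $\int 1/\sqrt{2\pi u^3}$. None of this cancellation or the $\phi'(t)$ term is produced by your sketch, and the hand-wave about "the Gaussian transition density of $W$ at $t$" as the source of the $\tfrac12$ is also not how the factor actually arises (in the paper it comes from $\tfrac12\partial_y^2$ in the Kolmogorov generator via Lemma \ref{joint}). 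To turn your proposal into a proof you would either need to make the time-inhomogeneous excursion argument rigorous, including the boundary/drift term, or fall back on the paper's analytic route through Theorem \ref{jou}.
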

\begin{remark}
In the parabolic drift case (Chernoff distribution), the term $\phi''$ is constant and the Laplace transform of a standard Brownian excursion area is known to be expressed via Airy functions. We will develop on the connection between the formulas found by Groeneboom in \cite{groeneboom89} and ours at the end of Section \ref{sect4}. Also, we refer the reader to the survey \cite{janson} for a more detailed exposition on the distribution and Laplace transform of various Brownian paths areas.
\end{remark}

\begin{figure}
\begin{center}
\begin{tikzpicture}[scale=1.7]
\begin{axis}[
          xmin=-1,xmax=10,
          ymin=-1,ymax=10,
          axis lines=middle,
          restrict y to domain=-2:10,
          enlargelimits,
          xlabel={\tiny{$x$}},
          ylabel={\tiny{$\rho(\cdot,t)$}},
          ticklabel style={fill=white}]
\addplot[domain=-3:3,mark=none] {2+0.1*(3-x)^2} node[fill=white, below]{};
\addplot[domain=3:8,mark=none] {4+0.1*(8-x)^2} node[fill=white, below]{};
\addplot[domain=8:12,mark=none] {6+0.1*(12-x)^2} node[fill=white, below]{};
\addplot[holdot] coordinates{(3,2)(8,4)};
\addplot[soldot] coordinates{(3,6.5)(8,7.6)};
\draw [dashed,help lines] (3,6.5) -| (3,0);
\draw [dashed,help lines] (8,7.6) -| (8,0);
\node[] at (3,-0.7) {\tiny{$x_1$}};
\node[] at (8,-0.7) {\tiny{$x_2$}};
\node[] at (0,1.7) {\tiny{Drift  }$b(\cdot,t)$};
\end{axis}
\end{tikzpicture}
\caption{The typical profile of the entropy solution at a given time $t>0$.}
\end{center}
\end{figure}
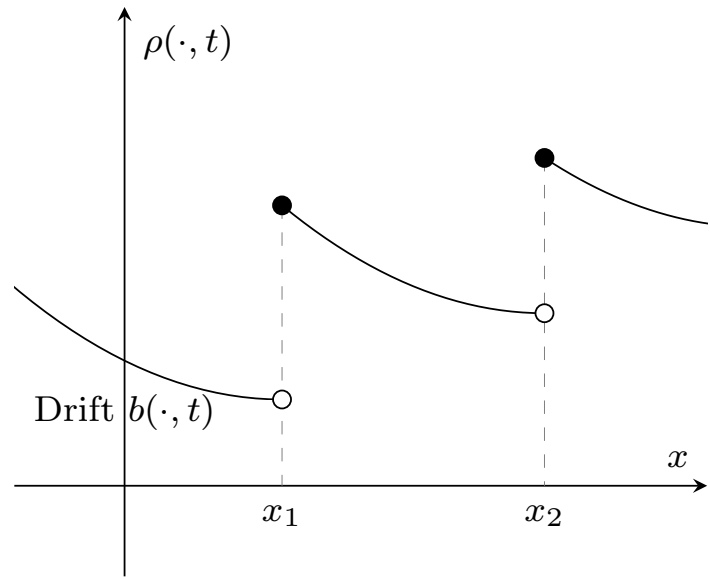

We define now a class of rough L\'evy processes called \textit{abrupt} that were introduced by Vigon in \cite{vigon}.
\begin{defn}
A L\'evy process $(X_t)_{t \in \mathbb{R}}$ is said to be \textit{abrupt} if its paths have unbounded variation and almost surely for all local maxima $m$ of $X$ we have 
\begin{align*}
\liminf\limits_{h \downarrow 0} \frac{1}{h}(X_{m-h}-X_{m-})=+\infty \text { and } \limsup\limits_{h \downarrow 0} \frac{1}{h}(X_{m+h}-X_m)=-\infty 
\end{align*}
\end{defn}
\begin{remark}
A L\'evy process $X$ with paths of unbounded variation is \textit{abrupt} if and only if 
\begin{align*}
\int_{0}^{1} t^{-1} \mathbb{P} \left[X_t \in [at,bt] \right]dt < \infty ,~~ \forall a<b
\end{align*}
Examples of \textit{abrupt} L\'evy processes include stable processes with index $\alpha \in (1,2]$ and any process with non-zero Brownian exponent.
\end{remark}
Our last main result determines the structure of shocks of the scalar conservation law when the initial data is a white noise derived from an \textit{abrupt} L\'evy process. 

\begin{thm}\label{discrete}
Assume that the L\'evy process $U_0$ is spectrally positive, \textit{abrupt} and is such that $U_0(y)=O(\vert y \vert)$ for $\vert y \vert \rightarrow \infty$, then the set 
\begin{align*}
\mathcal{L}^{t}=\{ y \in \mathbb{R} : y=y(x,t) \text{ or } y=y(x-,t) \text{ for some } x \in \mathbb{R} \}
\end{align*}
is almost surely discrete for any fixed time $t>0$. We say then that the structure of shocks of the entropy solution $\rho(\cdot,t)$ is discrete.  
\end{thm}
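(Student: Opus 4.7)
The plan is first to recast $\mathcal{L}^{t}$ in purely analytic terms. Via the Hopf--Lax formula \eqref{hopf-lax}, $y \in \mathcal{L}^{t}$ if and only if there exist $x, c \in \mathbb{R}$ such that the shifted convex function $z \mapsto \phi(z - x) + c$ majorizes $U_0$ on all of $\mathbb{R}$ with contact at $z = y$, where $\phi(\cdot) := tL(\cdot/t)$; the inclusion of left limits $y(x-, t)$ in $\mathcal{L}^{t}$ merely allows the contact to occur at a left jump location of $U_0$. It therefore suffices to show that this set of \emph{touching points} is almost surely discrete.

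I would argue by contradiction. Suppose $y_{*}$ is an accumulation point and extract a sequence of distinct touching points $y_n \to y_{*}$ with parameters $(x_n, c_n)$. The hypothesis $U_0(y) = O(|y|)$, combined with the superlinear growth of $\phi$, forces $\{x_n\}$ to be bounded: if $|x_n| \to \infty$, a mean value comparison of $\phi(z - x_n) - \phi(y_n - x_n)$ with $U_0(z) - U_0(y_n)$ at a fixed $z$ far in the appropriate direction contradicts the linear bound on $U_0$. Passing to a subsequence, $x_n \to x_{*} \in \mathbb{R}$, and taking limits in the touching relations shows that $(y_{*}, x_{*})$ is itself a touching pair, so $y_{*}$ is a global maximum of the tilted process $\tilde U(z) := U_0(z) - \phi(z - x_{*})$. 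Vigon's characterization of abruptness \cite{vigon} is an integral condition on the L\'evy measure of $U_0$ and is therefore insensitive to $C^{2}$ drift perturbations; applied at this global maximum of $\tilde U$, it yields
\begin{align*}
\liminf_{h \downarrow 0} \frac{U_0(y_{*}) - U_0(y_{*} - h)}{h} = +\infty, \qquad \limsup_{h \downarrow 0} \frac{U_0(y_{*} + h) - U_0(y_{*})}{h} = -\infty.
\end{align*}

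The contradiction then comes from the touching inequality at each $y_n$. Combining $U_0(y_n) = \phi(y_n - x_n) + c_n$ with $U_0(y_{*}) \le \phi(y_{*} - x_n) + c_n$, and Taylor-expanding $\phi$ around $y_{*} - x_{*}$ while using $x_n \to x_{*}$, one obtains the finite linear bound $(U_0(y_{*}) - U_0(y_n))/(y_{*} - y_n) \le \phi'(y_{*} - x_{*}) + o(1)$ along a subsequence with $y_n \uparrow y_{*}$, and the symmetric finite lower bound $(U_0(y_n) - U_0(y_{*}))/(y_n - y_{*}) \ge \phi'(y_{*} - x_{*}) + o(1)$ along a subsequence with $y_n \downarrow y_{*}$. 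Setting $h_n = |y_{*} - y_n| \downarrow 0$ then contradicts the $\pm\infty$ asymptotics above, proving the claim. The main obstacle will be the rigorous transfer of Vigon's abrupt property from $U_0$ to the tilted non-L\'evy process $\tilde U$: while the L\'evy-measure formulation clearly survives the smooth drift perturbation, the original pointwise derivative statement is cast in terms of local extrema of $U_0$ itself, and the passage to tangencies with a $C^{2}$-curve rather than with a line is precisely what extends the arguments of Bertoin \cite{bertoin-shocks} and Abramson \cite{abramson} beyond the parabolic (Burgers) case.
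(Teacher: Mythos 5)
The proposal takes a genuinely different route from the paper. The paper first proves the more general Theorem~\ref{discrete-psi} about the range of $\Psi^\phi$ and then verifies the growth hypothesis on $\phi = tL(\cdot/t)$; the proof of Theorem~\ref{discrete-psi} localizes to a compact parameter window $[-M,M]$, observes that every touching point of a shifted $\phi$ is necessarily a contact point of $U_0$ with its $C_M$-Lipschitz \emph{majorant}, cites the discreteness of this contact set for abrupt L\'evy processes (\cite{evansabramson}, Proposition~6.1), controls $\mathbb{P}[G^{C_M}_0 \le -M] + \mathbb{P}[D^{C_M}_1 \ge M]$ via stationary regenerative set theory, and finally removes the integrability assumption on $U_0(1)$ by truncating large jumps. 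Your compactness-and-contradiction scheme is more elementary in structure and bypasses the regenerative set estimates and the truncation argument.

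However, there is a genuine gap at the step you yourself flag. You apply Vigon's $\pm\infty$ one-sided derivative property at $y_*$, which is a local maximum of the \emph{tilted} process $\tilde U = U_0 - \phi(\cdot - x_*)$, not of $U_0$ itself. Vigon's theorem (as quoted in the paper's definition) is a statement about local maxima of the L\'evy process $U_0$: it says that a.s.\ at every local max $m$ of $U_0$ the approaching and departing slopes blow up. Your remark that the characterizing integral $\int_0^1 t^{-1}\P[X_t\in[at,bt]]\,dt<\infty$ depends only on the one-dimensional marginals of $U_0$ is correct, but it only tells you $U_0$ is abrupt; it does not, by itself, produce the derivative blowup at the (random) set of local maxima of $U_0 - \phi(\cdot-x_*)$, which is in general disjoint from the set of local maxima of $U_0$. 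Concretely, a local max of $\tilde U$ is, after linearizing $\phi$ at $y_*$, only an ``almost local maximum'' of the drift-shifted L\'evy process $U_0 - \phi'(y_*-x_*)\,\cdot$, with a quadratic tolerance; extending the $\pm\infty$ blowup to such points requires an additional argument. This is precisely the content of the cited proposition of \cite{evansabramson}, which proves that the contact set of an abrupt L\'evy process with a Lipschitz function is discrete. Your proof can likely be repaired by invoking that result (or the underlying lemma that abrupt paths a.s.\ admit no time $t$ with $X(s) \le X(t) + a(s-t) + C|s-t|$ on a neighborhood for finite $a,C$), but as written the transfer of abruptness to the tilted process is asserted rather than proved, and it is the crux of the theorem.

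A secondary issue worth noting: when you extract $y_n \to y_*$ and pass to the limit in the touching relations, $U_0$ is only c\`adl\`ag, so $U_0(y_n) \to U_0(y_*-)$ when $y_n \uparrow y_*$; since $U_0$ is spectrally positive the possible jump at $y_*$ actually helps your one-sided bound, but this needs to be said.
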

\begin{remark}
From a point of view of hydrodynamic turbulence, a discontinuity of the entropy solution $\rho(\cdot,t)$ at position $x$ means the presence of a cluster of particles at this location at time $t$. Those clusters interact with each other via inelastic shocks, and the cluster at location $x$ and at time $t$ contains all the particles that were initially located in $[y(x-,t),y(x,t))$. Our result shows that at any given time $t>0$, the set of clusters is discrete. When the initial data is a L\'evy white noise, we can picture that there are infinitely many particles initially scattered everywhere with i.i.d velocities. Therefore, when we assume that this initial profile is rough (as it is the case when the potential $U_0$ is an \textit{abrupt} L\'evy process), this turbulence forces all the particles to aggregate in heavy disjoint lumps instantaneously for any time $t>0$.
\end{remark}

\subsection*{Acknowledgement}
I would like to thank my advisor Fraydoun Rezakhanlou for many fruitful discussions.
\section{Preliminaries}\label{prelim}

\begin{notation}
We will use the notation $\text{argmax}^{+} f$ to denote the rightmost maximizer of a function $f$ (i.e. : the last time at which a function $f$ reaches its maximum).
\end{notation}

Menon and Srinivisan proved in their paper \cite{menon10} a closure theorem for white noise initial data for the scalar conservation law solutions. They showed that if initially the potential $U_0$ is spectrally positive with independent increments then $\rho(\cdot,t)$ is a spectrally positive Markov process for any fixed $t>0$. The proof of this statement follows from standard use of path decomposition of strong Markov processes at their ultimate maximum. The same holds for our process $\Psi^{\phi}$. Precisely, we have the following theorem for which we give the proof for the sake of completeness.

\begin{thm}\label{psi}
Assume that $U_0$ is a spectrally positive L\'evy process, then the process $\Psi^{\phi}$ is a non-decreasing Markov process. Moreover for any $a \in \mathbb{R}$, the process $\Psi^{\phi}(.+a)-a$ has the same distribution as $\Psi^{\phi}$. 
\end{thm}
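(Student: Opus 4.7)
The plan is to address the three assertions in order of increasing difficulty: stationarity, monotonicity, and the Markov property (the latter being the main content). For stationarity, substituting $y = y'+a$ in \eqref{psi} gives
\[
\Psi^{\phi}(x+a) - a = \text{argmax}^{+}_{y' \in \mathbb{R}}\bigl((U_0(y'+a) - U_0(a)) - \phi(y'-x)\bigr),
\]
since adding the constant $-U_0(a)$ does not affect the argmax; the two-sided L\'evy property gives $(U_0(y'+a) - U_0(a))_{y' \in \mathbb{R}} \eqdist (U_0(y'))_{y' \in \mathbb{R}}$, so the claim follows. For monotonicity, fix $x_1 < x_2$ and set $y_i := \Psi^{\phi}(x_i)$. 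The two maximizing inequalities, when added, yield $g(y_2) \ge g(y_1)$, where $g(y) := \phi(y-x_1) - \phi(y-x_2)$. Since $\phi$ is strictly convex and $x_1 < x_2$, one has $g'(y) = \phi'(y-x_1) - \phi'(y-x_2) > 0$, so $g$ is strictly increasing and $y_1 \le y_2$.

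The Markov property is the crux. The plan is to invoke Millar's path decomposition at the ultimate maximum. Strict convexity and superlinear growth of $\phi$, combined with $U_0(y) = o(\phi(y))$, imply that $z \mapsto U_0(z) - \phi(z-x_1)$ attains almost surely a unique global maximum at $y_1 := \Psi^{\phi}(x_1)$. The monotonicity just proved gives, for every $x_2 > x_1$,
\[
\Psi^{\phi}(x_2) - y_1 = \text{argmax}^{+}_{s \ge 0}\bigl(\tilde U(s) - \phi(y_1+s-x_2)\bigr),
\]
with $\tilde U(s) := U_0(y_1+s) - U_0(y_1)$; in particular $(\Psi^{\phi}(x))_{x \ge x_1}$ is $\sigma(y_1, \tilde U)$-measurable. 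Symmetrically, monotonicity forces $\Psi^{\phi}(x) = \text{argmax}^{+}_{y \le y_1}(U_0(y) - \phi(y-x))$ for $x \le x_1$, so $(\Psi^{\phi}(x))_{x \le x_1}$ is a function of $y_1$ and the pre-$y_1$ path of $U_0$. One then applies Millar's decomposition to the spectrally positive L\'evy process $U_0$ drifted by the deterministic convex function $-\phi(\cdot - x_1)$: conditional on $y_1$, the pre-$y_1$ path of $U_0$ and the shifted path $\tilde U$ are independent, and the conditional law of $\tilde U$ is that of $U_0$ conditioned to stay strictly below $s \mapsto \phi(y_1+s-x_1) - \phi(y_1-x_1)$ for all $s > 0$, a law depending on the past only through $y_1$. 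This gives the desired Markov property.

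The main obstacle is the precise invocation of Millar's decomposition in this two-sided, drifted setting. Classical formulations apply to one-sided driftless Markov processes with a unique ultimate maximum, so one must verify uniqueness (handled by strict convexity of $\phi$ together with a careful treatment of the countably many jumps of $U_0$ near $y_1$, which requires the spectral positivity assumption) and extend the decomposition in the presence of the deterministic drift $-\phi(\cdot - x_1)$. The cleanest route is via the space-time Markov property of $(z, U_0(z))$ combined with an $h$-transform identifying the conditional laws on either side of $y_1$. Once this is in place, the Markov property of $\Psi^{\phi}$ follows by conditioning on $y_1$ and exploiting the conditional independence.
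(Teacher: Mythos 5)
Your proposal is correct and follows the paper's proof essentially step for step: stationarity from the shift-invariance of the two-sided L\'evy increments, monotonicity from the strict convexity of $\phi$ (your exchange argument --- adding the two optimality inequalities to get $g(\Psi^{\phi}(x_2)) \ge g(\Psi^{\phi}(x_1))$ with $g$ strictly increasing --- is a minor cosmetic variant of the paper's chain of inequalities), and the Markov property from Millar's path decomposition at the ultimate maximum combined with independent increments and the monotonicity of $\Psi^{\phi}$. The concerns you flag at the end are not in fact obstacles: $U^{x}(y) := U_0(y) - \phi(y - x)$ is already a (time-inhomogeneous) Markov process so Millar's theorem applies directly without any extension for ``drift,'' and the paper invokes spectral positivity not to ensure uniqueness of the maximizer but to guarantee the maximum of $U^{x}$ is attained as a right-hand limit, which is what lets the decomposition be performed at the rightmost argmax.
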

\begin{proof}
For $x_1 \le x_2$ and $y \le \Psi^{\phi}(x_1)$, we have that 
\begin{align*}
U_0(\Psi^{\phi}(x_1))-U_0(y) & \ge \phi(\Psi^{\phi}(x_1)-x_1)-\phi(y-x_1) \\
& \ge \phi(\Psi^{\phi}(x_1)-x_2)-\phi(y-x_2)
\end{align*}
By the convexity of $\phi$, and hence $\Psi^{\phi}(x_1) \le \Psi^{\phi}(x_2)$. Also, by definition $\Psi^{\phi}$ is a c\`adl\`ag process (right continuous with left hand limits). Take $h>0$, then 
\begin{equation}\label{independence}
\Psi^{\phi}(x+h)=\Psi^{\phi}(x)+\\
\text{argmax}^{+}_{y \ge 0} \left(U_0(y+\Psi^{\phi}(x))-\phi(y+\Psi^{\phi}(x)-(x+h) )\right )
\end{equation}
The process $U^{x}(y):=U_0(y)-\phi(y-x)$ is clearly Markov. By Millar's theorem of path decomposition of Markov processes when they reach their ultimate maximum (see \cite{millar}), the process $(U^{x}(y+\Psi^{\phi}(x))_{y \ge 0}$ is independent of $(U^{x}(y))_{y \le \Psi^{\phi}(x)}$ given $(\Psi^{\phi}(x),U^{x}(\Psi^{\phi}(x)))$ (because of the upward jumps of $U_0$, the maximum is attained at the right hand limit). Moreover, because of the independence of the increments of $U^x$, the process $(U^{x}(y+\Psi^{\phi}(x))-U^{x}(\Psi^{\phi}(x)))_{y \ge 0}$ is independent of $(U^{x}(y))_{y \le \Psi^{\phi}(x)}$ given $\Psi^{\phi}(x)$. Now it suffices to see that $(\Psi^{\phi}(y))_{y \le x}$ only depends on the pre-maximum process $(U^{x}(y))_{y \le \Psi^{\phi}(x)}$ because of the monotonicity of $\Psi^{\phi}$, this fact alongside the equation \eqref{independence} gives the Markov property of the process $\Psi^{\phi}$. The last statement follows easily from the stationarity of increments of $U_0$. 
\end{proof}
\begin{remark}
Notice that except in the last statement, the stationarity of increments was not used in the proof of the Markovian property of the process $\Psi^{\phi}$.
\end{remark}
\section{The process $\Psi^{\phi}$ in the Brownian case}\label{sect3}
In this section, we assume that $W:=U_0$ is a two-sided Brownian motion. We proved in the previous section that the process $\Psi^{\phi}$ is Markov and enjoys a space-time shifted stationarity property. Hence, we shall only determine its transition function at time zero and consequently the form of its generator at this time. In this section we will differentiate and switch the order of integrals and differentiations without any justification, as Section \ref{sect4} is devoted to take care of all those technicalities.

\begin{notation}
In the sequel, we will deal with functions of the form $f(s,x,t,y)$ where $t$ and $s$ play the role of temporal variables, and $x$ and $y$ that of spatial variables. Without confusion, the notation $\partial_x f(s,x,t,y)$ (resp. $\partial_y f(s,x,t,y)$) refer to the partial derivative of $f$ with respect to the second variable (resp. fourth variable).
\end{notation}

We state here the first result regarding the transition function of the process $\Psi^{\phi}$. 
\begin{thm}
Let $h>0$ and $\omega_1<\omega_2$ be two real numbers. Then we have that
\begin{align*}
\mathbb{P}[\Psi^{\phi}(h)\in d\omega_2 \vert \Psi^{\phi}(0)=\omega_1]=\mathbb{P}[\text{argmax}^{+}_{\omega \ge \omega_1} X^{h}(\omega) \in d\omega_2]
\end{align*}
where $X^{h}(\omega):=S^{\downarrow}(\omega)+r^{h}(\omega)$ and 
\begin{itemize}
\item{ $(S^{\downarrow}(\omega))_{\omega \ge \omega_1}$ is the Markov process $(S(\omega):=W(\omega)-\phi(\omega))_{\omega \ge \omega_1}$ started at zero and Doob-conditioned to stay negative (i.e to hit $-\infty$ before $0$). Precisely, its transition function is given by
\begin{equation}\label{transition}
\mathbb{P}[S^{\downarrow}(t) \in dy \vert S^{\downarrow}(s)=x ]=\frac{\mathbb{P}[\tau_{0}=\infty \vert S(t)=y]}{\mathbb{P}[\tau_{0}=\infty \vert S(s)=x]}f(s,x,t,y)dy 
\end{equation}
for $t>s>\omega_1$ and $x,y<0$, and where $\tau_0$ is the first hitting time of zero of the process $S$. The function $f$ is the transition density of the process $S$ killed at zero, at time $t$ and state $y$, formally defined as 
\begin{align*}
\mathbb{P}[S(t) \in dy, \max\limits_{s \le u \le t} S(u)<0 \vert S(s)=x]=f(s,x,t,y)dy
\end{align*}
Moreover, the entrance law of $S^{\downarrow}$ is given by
\begin{equation}\label{entrance}
\mathbb{P}[S^{\downarrow}(t) \in dy ]=\frac{\mathbb{P}[\tau_{0}=\infty \vert S(t)=y]}{\partial_x \mathbb{P}[\tau_{0}=\infty \vert S(s)=x]_{|x=0}}\partial_x f(\omega_1,0,t,y)dy
\end{equation}}
\item{The function $r^{h}$ is defined as $r^{h}(\omega)=\phi(\omega)-\phi(\omega-h)+c$ where $c$ is a constant such that $r^{h}(\omega_1)=0$.}
\end{itemize}
\end{thm}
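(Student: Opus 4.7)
The plan is to combine the shift identity of Theorem~\ref{psi} with Millar's path decomposition of the Markov process $S(\omega)=W(\omega)-\phi(\omega)$ at its ultimate maximum.

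First, specializing equation \eqref{independence} to $x=0$ and performing the change of variable $\omega=y+\omega_1$, I rewrite
\[
\Psi^{\phi}(h)=\operatorname*{argmax}^{+}_{\omega\ge\omega_1}\bigl(W(\omega)-\phi(\omega-h)\bigr)
=\operatorname*{argmax}^{+}_{\omega\ge\omega_1}\bigl(S(\omega)+r^{h}(\omega)\bigr),
\]
since $W(\omega)-\phi(\omega-h)=S(\omega)+[\phi(\omega)-\phi(\omega-h)]$ differs from $S(\omega)+r^{h}(\omega)$ only by the additive constant $-c$, which does not affect the location of the maximum. Thus, once the law of $(S(\omega)-M)_{\omega\ge\omega_1}$ conditional on $\Psi^{\phi}(0)=\omega_1$ is identified with that of $S^{\downarrow}$ (where $M:=S(\omega_1)$, independent of the argmax since both argmax and adding constants commute), the statement follows.

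The second step is the post-maximum decomposition. Conditioning on $\Psi^{\phi}(0)=\omega_1$ amounts to conditioning $S$ to attain its global maximum at the right-continuous time $\omega_1$. Millar's theorem, invoked as in the proof of Theorem~\ref{psi}, says that the post-maximum process $(S(\omega)-M)_{\omega\ge\omega_1}$ is independent of the pre-maximum path given $(\omega_1,M)$, starts at $0$, and stays strictly negative afterwards. Its semigroup is the Doob $h$-transform of $S$ killed upon hitting $0$, using the positive excessive function $h(s,x):=\mathbb{P}[\tau_{0}=\infty\mid S(s)=x]$. Writing the $h$-transformed kernel in terms of the killed transition density $f$ yields precisely formula \eqref{transition}.

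Third, since the starting point $x=0$ is on the killing boundary, one must pass to the limit $x\uparrow 0$ in \eqref{transition} to obtain the entrance law. Both $f(\omega_1,x,t,y)$ and $h(\omega_1,x)$ vanish as $x\uparrow 0$; dividing numerator and denominator by $|x|$ and invoking l'H\^opital yields the ratio $\partial_{x}f(\omega_1,0,t,y)/\partial_{x}h(\omega_1,0)$ appearing in \eqref{entrance}.

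The main obstacle is the rigorous justification of the boundary entrance law: one needs the smoothness of $x\mapsto h(\omega_1,x)$ at $0^{-}$ with nonzero derivative, the existence and integrability of $\partial_{x}f(\omega_1,0,t,y)$, and the fact that the limit of the $h$-transformed semigroup genuinely yields a Feller entrance law consistent with Millar's decomposition. These regularity issues are precisely what Section~\ref{sect4} is designed to verify; in the present section they are assumed, in line with the convention already stated by the author. A minor ancillary fact, handled quickly by the superlinear growth of $\phi$ together with standard Brownian estimates, is that the rightmost maximizer of $X^{h}$ on $[\omega_1,\infty)$ exists almost surely and is finite.
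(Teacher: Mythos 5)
Your proposal is correct and follows essentially the same route as the paper: rewrite $\Psi^{\phi}(h)$ conditional on $\{\Psi^{\phi}(0)=\omega_1\}$ as $\operatorname{argmax}^{+}_{\omega\ge\omega_1}(S(\omega)+r^{h}(\omega))$ given $\operatorname{argmax}^{+}S=\omega_1$, then identify the conditional law of the post-maximum process $(S(\omega)-S(\omega_1))_{\omega\ge\omega_1}$ with that of the Doob-conditioned process $S^{\downarrow}$ via Millar's decomposition at the ultimate maximum, and obtain the entrance law by letting $s\downarrow\omega_1$, $x\uparrow 0$. Your explicit framing of the transition kernel as a Doob $h$-transform with excessive function $h(s,x)=\mathbb{P}[\tau_0=\infty\mid S(s)=x]$ and the l'H\^opital argument for the entrance law are just unpacked versions of the paper's appeal to Millar's transition-density formula and the stated limit; the two arguments are the same.
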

\begin{proof}
We have that 
\begin{align*}
\mathbb{P}[\Psi^{\phi}(h)\in d\omega_2 \vert \Psi^{\phi}(0)=\omega_1]&=\mathbb{P}[\text{argmax}^{+}_{\omega \ge \omega_1} (W(\omega)-\phi(\omega-h))\in d\omega_2\vert \Psi^{\phi}(0)=\omega_1]\\
&=\mathbb{P}[\text{argmax}^{+}_{\omega \ge \omega_1} (S(\omega)-S(\omega_1)+r^{h}(\omega)) 
\in d\omega_2 \\
 \vert \text{argmax}^{+} S(\omega)=\omega_1]
\end{align*}
Now, using Millar path decomposition of Markov processes when they reach their ultimate maximum, the expression of the transition densities of the post-maximum process in \cite{millar}[Equation 9] on the process $S$, and the spatial homogeneity of the Brownian motion (and thus of $S$), we get \eqref{transition}. To get the entrance law it suffices to send $s$ to $\omega_1$ and $x$ to zero. 
\end{proof}

Let us now introduce some notation to keep our formulas compact. 
\begin{notation}
Denote by 
\begin{align*}
J(s,x)=\mathbb{P}[\tau_0=\infty \vert S(s)=x]=\mathbb{P}[ S(u)<0 \text{ for all } u \ge s \vert S(s)=x],~~x<0
\end{align*}
and define 
\begin{align*}
j(s,x)= \frac{\partial }{\partial x} J(s,x) ,~~s\in\mathbb{R},~~ x<0
\end{align*}
\begin{align*}
j(s)=\lim_{x \uparrow 0} \frac{\partial }{\partial x} J(s,x),~~ s\in \mathbb{R}
\end{align*}
Also denote 
\begin{align*}
\Phi(s,x,\omega)=\frac{\mathbb{P}[\tau_0 \in d\omega \vert S(s)=x]}{d\omega},~~ s < \omega , ~~x \in \mathbb{R}
\end{align*}
Furthermore, let $\tilde{S}$ be the process defined as $(\tilde{S}(\omega):=W(\omega)-\phi(-\omega))_{\omega \in \mathbb{R}}$. We define $\tilde{f}$ and $\tilde{\Phi}$ analogously. 
\end{notation}
With this notation, the entrance law of the process $S^{\downarrow}$ is expressed as 
\begin{equation}\label{entrance-1}
\mathbb{P}[S^{\downarrow}(t)\in dy]=\frac{J(t,y)}{j(s)}\partial_x f(\omega_1,0,t,y)dy ,~~ t > \omega_1 \text{ and } y<0
\end{equation}
The next result will allow us to recover the transition function of the process $\Psi^{\phi}$.
\begin{thm}\label{main}
Let $\omega_1<\omega_2$ and $x^{*} \in (0,r^{h}(\omega_2))$. Define $\omega^{*} \in (\omega_1,\omega_2)$  to be the unique point such that $r_h(\omega^*)=x^{*}$ (such a time exists because of the strict convexity of $\phi$ that makes $r_h$ strictly increasing). Then we have that 
\begin{align*}
\frac{\mathbb{P}[\text{argmax}^{+}_{\omega \ge \omega_1} X^{h}(\omega) \in d\omega_2, \max\limits_{\omega \ge \omega_1} X^{h}(\omega) \in dx^{*}]}{d\omega_2 dx^{*}}=\\
2\int_{-\infty}^{x_{*}} \frac{j(\omega_2-h)}{j(\omega_1)}\Phi(\omega^*-h,y-x^{*},\omega_2-h)\tilde{\Phi}(-\omega^{*},y-x^{*},-\omega_1)dy
\end{align*}
\end{thm}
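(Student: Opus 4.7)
The plan is to split the event by applying the strong Markov property of $S^\downarrow$ at the deterministic ``crossing time'' $\omega^*$ (defined by $r^h(\omega^*) = x^*$), and then to identify each of the two resulting pieces by combining (i) the entrance law for $S^\downarrow$, (ii) a Brownian time-reversal, (iii) an inversion of the Doob $h$-transform, and (iv) a drift-matching identity that produces the time shift by $-h$ appearing in the claimed formula. Writing $y := X^h(\omega^*) = S^\downarrow(\omega^*) + x^*$, one has $y - x^* = S^\downarrow(\omega^*) < 0$, so the integration variable $y$ ranges over $(-\infty, x^*)$, and the joint density is obtained by integrating over $dy$ the product of the pre-$\omega^*$ and post-$\omega^*$ conditional densities.

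For the pre-$\omega^*$ piece, note that on $[\omega_1, \omega^*]$ the inequality $X^h < x^*$ is automatic, since $r^h \le r^h(\omega^*) = x^*$ and $S^\downarrow < 0$ there. Hence the pre-$\omega^*$ contribution is simply the density of $S^\downarrow(\omega^*) = y - x^*$ given by the entrance law \eqref{entrance-1}, namely $\tfrac{J(\omega^*, y-x^*)}{j(\omega_1)}\,\partial_x f(\omega_1, 0, \omega^*, y-x^*)$. The key step is to identify $\partial_x f(\omega_1, 0, \omega^*, y-x^*)$ with $2\,\tilde{\Phi}(-\omega^*, y-x^*, -\omega_1)$ (up to sign). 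This follows from the reversal identity $\tilde{S}(-\cdot) \stackrel{d}{=} S(\cdot)$, itself a consequence of $W(-\cdot) \stackrel{d}{=} W(\cdot)$, which converts the killed-from-above transition density $f$ into a first-passage density at $0$ for $\tilde{S}$. The reflection-principle factor of $2$ emerges upon differentiating at $x=0$; in the driftless Brownian model this is exactly the identity $\partial_x f(s, 0, t, y) = -2\,\Phi(s, y, t)$, obtained from the explicit Gaussian form of $f$.

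For the post-$\omega^*$ piece, introduce $V(\omega) := S^\downarrow(\omega) - g(\omega)$ with $g(\omega) := x^* - r^h(\omega)$; the event then becomes ``$\omega_2$ is the unique zero of $V$ on $[\omega^*, \infty)$''. Undoing the Doob $h$-transform of $S^\downarrow$ contributes the factor $1/J(\omega^*, y-x^*)$, which cancels the $J(\omega^*, y-x^*)$ in the pre-piece. A direct SDE comparison shows that the processes $(V(\omega^*+t))_{t \ge 0}$ and $(S(\omega^*-h+t))_{t \ge 0}$ share the same drift $-\phi'(\omega^*-h+t)$ and hence have identical laws when started from the same value $y - x^*$; this translates ``first zero of $V$ at $\omega_2$'' into ``first zero of $S$ at $\omega_2 - h$'', producing the factor $\Phi(\omega^*-h, y-x^*, \omega_2-h)$. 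The additional requirement that $V$ remain negative on $(\omega_2, \infty)$ is captured, as a density in $x^*$, by the boundary derivative $j^V(\omega_2) = j(\omega_2 - h)$, since $J^V(\omega, x) = J(\omega - h, x)$ by the same drift-matching. Integrating over $y$ yields the stated formula.

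The main technical obstacles are twofold: (i) making rigorous the time-reversal identification of $\partial_x f$ with $\tilde{\Phi}$, in particular the interchange of the boundary derivative at $x=0$ with the Brownian reversal, which is also where the factor of $2$ emerges; and (ii) extracting the factor $j(\omega_2 - h)$ from the post-$\omega^*$ analysis, since the event ``$V$ stays strictly negative on $(\omega_2, \infty)$'' has probability zero when $V$ is started from $0$, so this factor must be interpreted as a limiting density obtained by shifting the level $x^*$ infinitesimally and differentiating. Both steps are analogous to Groeneboom's parabolic-drift computation \cite{groeneboom89} but must be carried out without the explicit Airy-function simplifications; the justifications are the subject of Section~\ref{sect4}.
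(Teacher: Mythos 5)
Your proposal is correct and mirrors the paper's own proof: the same Markov decomposition at the deterministic crossing time $\omega^*$, the same identification of the pre-$\omega^*$ density with the entrance law for $S^{\downarrow}$, the same cancellation of $J(\omega^*,y-x^*)$ by reverting the $h$-transform to naive conditioning on the positive-probability event, and the same drift-shift by $h$ that transforms $S+r^h$ into a time-translate of $S$ so that Lemma~\ref{joint} produces $j(\omega_2-h)\,\Phi(\omega^*-h,y-x^*,\omega_2-h)$. The only cosmetic difference is that you justify the identity $\tilde{f}(s,x,t,y)=f(-t,y,-s,x)$ probabilistically from $\tilde S(-\cdot)\stackrel{d}{=}S(\cdot)$ (via bridge reversal) and phrase the post-$\omega^*$ joint density heuristically, whereas the paper proves the reversal identity by forward/backward PDE duality and packages the joint density as Lemma~\ref{joint}; both yield the same factor of $2$ coming from $\Phi(s,x,t)=-\tfrac12\partial_y f(s,x,t,0)$.
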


Before proving this theorem, we will state a lemma that links the joint distribution of the maximum of the diffusion $S$ and its location with the functionals $f$ and $J$ . 
\begin{lem}\label{joint}
Let $M$ and $\omega_{M}$ be respectively the maximum of the process $(S(\omega))_{\omega \ge s}$ and its location, we have then that 
\begin{equation}
\frac{\mathbb{P}[\omega_M \in dt, M \in dz \vert S(s)=x]}{dt dz}=\frac{1}{2}j(t)\partial_y f(s,x-z,t,0)=-j(t)\Phi(s,x-z,t)
\end{equation}
\end{lem}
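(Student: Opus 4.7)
The plan is to compute the joint CDF
\begin{align*}
F(t, z) := \mathbb{P}[\omega_M \le t,\, M \le z \mid S(s) = x]
\end{align*}
and extract the joint density via $\rho(t,z) = \partial_t \partial_z F(t,z)$. The event factors as $\{M_1 \le z\} \cap \{\sup_{u > t} S(u) \le M_1\}$, where $M_1 := \max_{s \le u \le t} S(u)$. Conditioning on $(M_1, S(t))$, using the Markov property at $t$, and exploiting the spatial homogeneity of the increments of $S$ to translate a barrier at $m_1$ down to $0$ (so that $\mathbb{P}[\max_{[s,t]} S \le m_1,\, S(t) \in dy \mid S(s)=x] = f(s, x-m_1, t, y-m_1)\,dy$), I rewrite
\begin{align*}
F(t, z) = \int_{-\infty}^{z} \int_{-\infty}^{m_1} \partial_{m_1}\!\bigl[f(s, x-m_1, t, y-m_1)\bigr]\, J(t, y-m_1)\, dy \, dm_1,
\end{align*}
recognizing $\partial_{m_1} f(s, x-m_1, t, y-m_1)$ as the joint density of $(M_1, S(t))$ given $S(s) = x$.

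Differentiating in $z$ picks up only the outer limit, and after the substitution $u = y - z$ the chain rule gives
\begin{align*}
\partial_z F(t, z) = -\int_{-\infty}^{0} (\partial_x f + \partial_y f)(s, x-z, t, u)\, J(t, u)\, du.
\end{align*}
For $\partial_t$, I invoke the Kolmogorov forward equation $\partial_t f = L^{*}_u f$ with $L^{*}_u = \tfrac{1}{2}\partial_u^2 + \phi'(t)\partial_u$ and the backward equation $\partial_t J = -L_u J$ with $L_u = \tfrac{1}{2}\partial_u^2 - \phi'(t)\partial_u$. Since $\partial_x$ and $\partial_y = \partial_u$ commute with $\partial_t$ and with $L^{*}_u$, the time derivative of the integrand becomes $L^{*}_u(\partial_x f + \partial_y f)\, J - (\partial_x f + \partial_y f)\, L_u J$, and Green's identity collapses this to boundary contributions.

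At $u = -\infty$ every term vanishes by the Gaussian-type decay of $f$ and $J$. At $u = 0$ the relations $J(t, 0) = 0$, $\partial_u J(t, u)|_{u = 0^-} = j(t)$, and --- crucially --- $f(s, \cdot, t, 0) \equiv 0$ as a function of the starting point (hence $\partial_x f(s, x-z, t, 0) = 0$) leave only the $\partial_y f$ piece, giving
\begin{align*}
\rho(t, z) = \tfrac{1}{2}\, j(t)\, \partial_y f(s, x-z, t, 0),
\end{align*}
which is the first equality. The second follows from the flux-at-boundary formula for a diffusion killed at $0$: since $f(s, \cdot, t, 0) = 0$, the drift contribution to the current vanishes at $u=0$, so the outgoing probability current --- i.e.\ the density of the first hitting time of $0$ --- equals $\Phi(s, x, t) = -\tfrac{1}{2}\partial_y f(s, x, t, 0)$, and therefore $-j(t)\, \Phi(s, x-z, t) = \tfrac{1}{2}\, j(t)\, \partial_y f(s, x-z, t, 0)$.

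The main obstacle is purely analytic: justifying the differentiations under the integral, the use of Kolmogorov's equations up to the killing boundary, and the vanishing of boundary terms as $u \to -\infty$ all demand regularity and decay estimates on $f$ and $J$ that are nontrivial for a general strictly convex $\phi$ with superlinear growth. These estimates are exactly what Section~\ref{sect4} is devoted to, and the proof would simply invoke them as a black box. An alternative route via a Williams-type path decomposition at the ultimate maximum --- with $\Phi(s, x-z, t)$ appearing as the pre-max first-hitting density and $-j(t)$ as the entrance rate of the post-max process conditioned to stay below $z$ --- is conceptually attractive, but would depend on the same analytic inputs.
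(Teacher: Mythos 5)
Your proof is correct, and the mechanism at its heart --- forward equation for the $f$ factor, backward equation for the $J$ (or $j$) factor, Green's identity collapsing the bulk term to boundary contributions, with the only surviving piece coming from $\partial_u J(t,0^-)=j(t)$ --- is exactly the one the paper uses. The starting decomposition is different, though, and it is worth comparing. The paper works with the mixed object $\mathbb{P}[\omega_M>t,\,M\in dz\mid S(s)=x]$ directly: conditioning on $S(t)$ gives $-\int_{-\infty}^0 f(s,x-z,t,y)\,j(t,y)\,dy\,dz$ in one step, and then only a single $\partial_t$ is needed. You instead take the full joint CDF $F(t,z)$, condition on the pair $(M_1,S(t))$, and differentiate in both $z$ and $t$. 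This is valid, but it costs you an extra $\partial_x f$ term (from $\partial_{m_1}[f(s,x-m_1,t,y-m_1)]$) that then has to be argued away via $f(s,\cdot,t,0)\equiv 0$; the paper's decomposition never generates that term because it applies the Markov property at time $t$ to split off the post-$t$ max rather than conditioning on the running max $M_1$. There is also a minor rigor point in your route that the paper sidesteps: the law of $(M_1,S(t))$ has a singular component on the diagonal $\{y=m_1\}$ (the running max can be attained at time $t$ with positive probability), so writing the inner probability as a double integral against the density $\partial_{m_1}f$ needs the remark that $J(t,0)=0$ kills the diagonal contribution. Your treatment of the second equality via the boundary flux of the killed diffusion is a clean restatement of the paper's integration-by-parts argument on $\mathbb{P}[\tau_0>t\mid S(s)=x]=\int_{-\infty}^0 f(s,x,t,y)\,dy$; the two are equivalent. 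Finally, you are right that the operations --- differentiating under the integral, using the Kolmogorov equations up to the killing boundary, and discarding the terms at $u=-\infty$ --- all lean on the smoothness and Gaussian decay of $f$ proved in Section~\ref{sect4}, and the paper says so explicitly in the remark following the lemma.
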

\begin{proof}
We have by the Markov property that
\begin{align*}
\mathbb{P}[\omega_M >t, M \in dz \vert S(s)=x]&=\mathbb{P}[\max\limits_{s \le u \le t} S(u) < z, \max\limits_{u \ge t} S(u) \in dz \vert S(s)=x]\\
&=\int_{-\infty}^{z} f(s,x-z,t,y-z)\mathbb{P}[\max_{u \ge t } S(u) \in dz \vert S(t)=y]dy
\end{align*}
Now we see that
\begin{equation*}
\mathbb{P}[\max_{u \ge t } S(u) \in dz \vert S(t)=y]=J(t,y-z-dz)-J(t,y-z)=-j(t,y-z)dz
\end{equation*}
Hence 
\begin{equation*}
\mathbb{P}[\omega_M >t, M \in dz \vert S(s)=x]=-\int_{-\infty}^{0}f(s,x-z,t,y)j(t,y)dydz
\end{equation*}
Thus
\begin{equation}\label{time-der}
\frac{\mathbb{P}[\omega_M \in dt, M\in dz]}{dzdt}=\frac{\partial}{\partial t} \int_{-\infty}^{0} f(s,x-z,t,y)j(t,y)dy 
\end{equation}
Now, by Kolmogorov forward and backward equations on the diffusion $S$ we have that
\begin{equation*}
\partial_t f = \phi'(t)\partial_y f + \frac{1}{2} \partial^2_y f 
\end{equation*}
and
\begin{equation*}
\partial_t j=\phi'(t)\partial_y j-\frac{1}{2} \partial^2_y j
\end{equation*}
By interchanging the time partial derivative and the integral sign in \eqref{time-der}, we find by integration by parts
\begin{align*}
\frac{\partial}{\partial t} \int_{-\infty}^{0} f(s,x-z,t,y)j(t,y)dy =\phi'(t)\left [ fj\right]_{-\infty}^{0}+\frac{1}{2}\left[j\partial_y f-f\partial_y j \right]_{-\infty}^{0}
\end{align*}
Now it suffices to see that $f$ vanishes at both zero and infinity, from which the first equality follows. For the second equality, it suffices to see that
\begin{align*}
\mathbb{P}[\tau_0 > t \vert S(s)=x]=\int_{-\infty}^{0} f(s,x,t,y)dy
\end{align*} 
Differentiating with respect to time and using the Kolmogorov forward equation in the same fashion as was done before gives the result.
\end{proof}
\begin{remark}
All these differentiations and integrations by parts are justified by the fact that $f$ and $j$ are sufficiently smooth and integrable away from $\{t=s\}$. This fact will be proved in the next section.
\end{remark}
\begin{proof}[Proof of Theorem \ref{main}]
We have that
\begin{align*}
\mathbb{P}[\text{argmax}^{+}_{\omega \ge \omega_1} X^{h}(\omega) \in d\omega_2, \max\limits_{\omega \ge \omega_1} X^{h}(\omega) \in dx^{*}]=\\
\int_{-\infty}^{x^{*}} \mathbb{P}[X^{h}(\omega^{*}) \in dy,\text{argmax}^{+}_{\omega \ge \omega_1} X^{h}(\omega) \in d\omega_2, \max\limits_{\omega \ge \omega_1} X^{h}(\omega) \in dx^{*}]
\end{align*}
Because for $ \omega \in [\omega_1,\omega^{*})$, we have that  $X^{h}(\omega) \le r_h(\omega) < x^{*}$, then by the Markov property we get that
\begin{align*}
\mathbb{P}[X^{h}(\omega^{*}) \in dy,\text{argmax}^{+}_{\omega \ge \omega_1} X^{h}(\omega) \in d\omega_2, \max\limits_{\omega \ge \omega_1} X^{h}(\omega) \in dx^{*}]=\\
\mathbb{P}[X^{h}(\omega^{*})\in dy]\mathbb{P}[\text{argmax}^{+}_{\omega \ge \omega^{*}} X^{h}(\omega) \in d\omega_2, \max\limits_{\omega \ge \omega^{*}} X^{h}(\omega) \in dx^{*} \vert X^{h}(\omega^{*})=y]
\end{align*}
Let us focus first on the second term of this product. The law of the Markov process $X^h$ is that of the process $S+r^{h}$ conditioned to stay below $r_h$. However, when $X^{h}$ starts from the state $y<x^{*}$ at time $\omega^{*}$, the event we condition on has positive probability and hence it is just the naive conditioning. Thus, we can write
\begin{align*}
\mathbb{P}\left[\text{argmax}^{+}_{\omega \ge \omega^{*}} X^{h}(\omega) \in d\omega_2, \max\limits_{\omega \ge \omega^{*}} X^{h}(\omega) \in dx^{*} \vert X^{h}(\omega^{*})=y\right]=\\
\mathbb{P}\left[\text{argmax}^{+}_{\omega \ge \omega^{*}} S(\omega)+r^{h}(\omega) \in d\omega_2, \max\limits_{\omega \ge \omega^{*}} S(\omega)+r^h(\omega) \in dx^{*} \right.\\
\left. \vert S(\omega^{*})=y-x^{*} , S(\omega) \le 0 \text { for all } \omega \ge \omega^{*}\right]\\
\end{align*}
This probability is equal to the ratio of this probability
\begin{align*}
\mathbb{P}_1=\mathbb{P}\left[\text{argmax}^{+}_{\omega \ge \omega^{*}} S(\omega)+r^{h}(\omega) \in d\omega_2, \max\limits_{\omega \ge \omega^{*}} S(\omega)+r^h(\omega) \in dx^{*}, \right.\\
\left. S(\omega) \le 0 \text { for all } \omega \ge \omega^{*} \vert S(\omega^{*})=y-x^{*}
 \right]
\end{align*}
over the probability 
\begin{align*}
\mathbb{P}_2=\mathbb{P}\left[S(\omega) \le 0 \text { for all } \omega \ge \omega^{*} \vert S(\omega^{*})=y-x^{*}
 \right]
\end{align*}
For the first probability $\mathbb{P}_1$, notice that on the event that $\{ \max\limits_{\omega \ge \omega^{*}} S(\omega)+r^h(\omega) \in dx^{*} \}$, we always have that $S(\omega) \le 0 \text{ for all } \omega \ge \omega^{*}$, because $r^{h}(\omega) \ge x^{*}$ for $\omega \ge \omega^{*}$. Thus
\begin{align*}
\mathbb{P}_1=\mathbb{P}\left[\text{argmax}^{+}_{\omega \ge \omega^{*}} S(\omega)+r^{h}(\omega) \in d\omega_2, \max\limits_{\omega \ge \omega^{*}} S(\omega)+r^h(\omega) \in dx^{*} \right. 
\left. \vert S(\omega^{*})=y-x^{*}
\right]
\end{align*}
Now we have that
\begin{align*}
S(\omega)+r^{h}(\omega)=W(\omega)-\phi(\omega-h)+c ~~,~~ \omega \ge \omega^{*}
\end{align*}
Hence
\begin{align*}
(S(\omega)+r^h(\omega) \vert S(\omega^*)=y-x^{*})_{\omega \ge \omega^{*}}  \overset{\mathrm{d}}{=} (S(\omega-h)  \vert S(\omega^{*}-h)=y)_{\omega \ge \omega^{*}}
\end{align*}
Thus by using Lemma \ref{joint} for $s=\omega^{*}-h$ and $x=y-x^{*}$, we get that
\begin{align*}
\mathbb{P}_1=-j(\omega_2-h)\Phi(\omega^{*}-h,y-x^{*},\omega_2-h)d\omega_2 dx^{*}
\end{align*}
Therefore
\begin{equation}\label{firstpart}
\frac{\mathbb{P}_1}{\mathbb{P}_2}=-\frac{j(\omega_2-h)\Phi(\omega^*-h,y-x^*,\omega_2-h)}{J(\omega^*,y-x^*)}d\omega_2dx^{*}
\end{equation}
Finally for the first term $\mathbb{P}[X^{h}(\omega^*) \in dy]$, we have that 
\begin{align*}
\mathbb{P}[X^{h}(\omega^*) \in dy]&=\mathbb{P}[S^{\downarrow}(\omega^{*}) \in d(y-r^h(\omega^{*}))]\\
&=\mathbb{P}[S^{\downarrow}(\omega^{*}) \in d(y-x^{*})]\\
&=\frac{J(\omega^{*},y-x^{*})}{j(\omega_1)}\partial_x f(\omega_1,0,\omega^{*},y-x^{*})dy
\end{align*}
Now it is not hard to see that we have the following equality
\begin{equation}\label{backward}
\tilde{f}(s,x,t,y)=f(-t,y,-s,x)
\end{equation}
This is true because both those functions verify the same PDE with the same boundary and growth conditions, by combining the backward and forward Kolmogorov equations. Hence 
\begin{equation*}
\partial_x f(s,x,t,y)=\partial_y \tilde{f}(-t,y,-s,x)
\end{equation*} 
Hence, by Lemma \ref{joint} 
\begin{align*}
\partial_x f(\omega_1,0,\omega^*,y-x^*)&=\partial_y \tilde{f}(-\omega^*,y-x^{*},-\omega_1,0)\\
&=-2\tilde{\Phi}(-\omega^{*},y-x^{*},-\omega_1)
\end{align*}
Thus 
\begin{equation}\label{secondpart}
\mathbb{P}[X^{h}(\omega^*)\in dy\vert X^h(\omega_1)=0]=-2\frac{J(\omega^*,y-x^*)}{j(\omega_1)}\tilde{\Phi}(-\omega^*,y-x^*,-\omega_1)dy
\end{equation}
Multiplying equations \eqref{firstpart} and \eqref{secondpart} and integrating with respect to $y$ on $(-\infty,x^{*})$ gives the result. 
\end{proof}
\begin{figure}
\begin{center}
\begin{tikzpicture}[scale=1.7]
\draw[->] (-1,0)--(4,0) node[right]{\tiny{$\omega$}};
\draw[->] (0,-1)--(0,3) node[above]{\tiny{$y$}};
\draw[domain=0:3.5,variable=\x] plot({\x},{0.3*\x*\x});
\draw[dashed] (0,1)--(4,1);
\draw[dashed,color=blue] (2.468,0)--(2.468,1);
\draw[dashed,color=blue] (1.82,0)--(1.82,1);
\node[] at (2.3,2) {\tiny{$r^{h}$}};
\node[] at (1,-1) {\tiny{$X^{h}$}};
\node[] at (2.468,-0.2) {\tiny{$\omega_2$}};
\node[] at (1.70,0.1) {\tiny{$\omega^{*}$}};
\node[] at (-0.2,1) {\tiny{$x^{*}$}};
\node[] at (-0.1,0.1) {\tiny{$\omega_1$}};
\Emmett{370}{0.01}{0.16}{black}{}
\end{tikzpicture}
\caption{Path decomposition of $X^{h}$ at its maximum}
\end{center}
\end{figure}
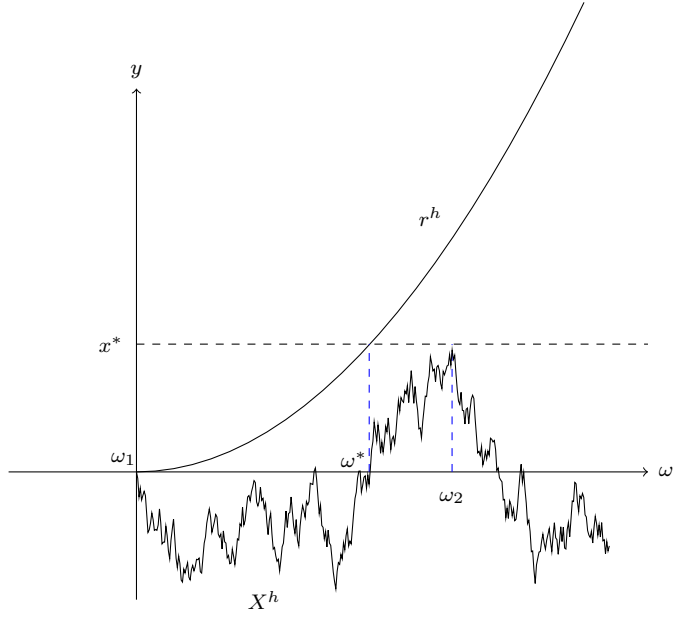

We are ready now to state the main result of this section.

\begin{thm}
The transition function of the process $\Psi^{\phi}$ is given by 
\begin{align*}
\mathbb{P}[\Psi^{\phi}(h) \in d\omega_2 \vert \Psi^{\phi}(0)=\omega_1]=
2\frac{j(\omega_2-h)}{j(\omega_1)} \times \\
\int_{\omega_1}^{\omega_2}\int_{\infty}^{0} (r^{h})'(\omega)\Phi(\omega-h,y,\omega_2-h)\tilde{\Phi}(-\omega,y,-\omega_1)dy d\omega
\end{align*}
Moreover, the process $\Psi^{\phi}$ is pure-jump and its generator at zero is given by its action on any test function $\varphi \in C_{c}^{\infty}(\mathbb{R})$ 
\begin{align*}
\mathcal{A}^{\phi} \varphi(y)=\int_{y}^{\infty} (\varphi(z)-\varphi(y))n^{\phi}(y,z)dz
\end{align*}
where 
\begin{align*}
n^{\phi}(y,z)=2\frac{j(z)}{j(y)}\int_{y}^{z}\int_{-\infty}^{0} \phi''(\omega)\Phi(\omega,x,z)\tilde{\Phi}(-\omega,x,-y)dxd\omega =:\frac{j(z)}{j(y)}K^{\phi}(y,z)
\end{align*}
\end{thm}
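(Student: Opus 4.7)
The plan is to read off both the transition density and the generator directly from the joint density identified in Theorem \ref{main}, by first integrating out the maximum $x^*$ and then letting $h \downarrow 0$.

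For the transition density, fix $\omega_1 < \omega_2$ and $h > 0$. The strict convexity of $\phi$ makes $r^h$ strictly increasing on $(\omega_1, \omega_2)$, so the map $x^* = r^h(\omega^*)$ is a $C^1$ bijection from $(0, r^h(\omega_2))$ onto $(\omega_1, \omega_2)$ with Jacobian $(r^h)'(\omega^*) = \phi'(\omega^*) - \phi'(\omega^* - h)$. I would integrate the joint density supplied by Theorem \ref{main} over $x^* \in (0, r^h(\omega_2))$, apply this change of variables, and simultaneously translate the inner variable by $y \leftarrow y + x^*$ so that $y$ ranges over $(-\infty, 0)$. After factoring the ratio $j(\omega_2-h)/j(\omega_1)$ out of both integrals, the resulting double integral is precisely the stated formula for $\mathbb{P}[\Psi^\phi(h)\in d\omega_2 \mid \Psi^\phi(0) = \omega_1]/d\omega_2$.

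To extract the generator at zero, I divide this density by $h$ and send $h \downarrow 0$. The only $h$-dependence outside of continuity limits sits in the factor $(r^h)'(\omega)$, for which
\[
\frac{(r^h)'(\omega)}{h} \;=\; \frac{\phi'(\omega)-\phi'(\omega-h)}{h} \;\longrightarrow\; \phi''(\omega)
\]
uniformly on compacts, while $j(\omega_2-h)$, $\Phi(\omega-h,y,\omega_2-h)$ and $\tilde{\Phi}(-\omega,y,-\omega_1)$ converge by joint continuity of their arguments; substituting $(\omega_1,\omega_2) = (y,z)$ then produces the pointwise intensity $n^\phi(y,z)$. Because $\Psi^\phi$ is monotone non-decreasing by Theorem \ref{psi}, its one-step transition decomposes as a point mass at the starting point $\omega_1$ (no jump in the interval $[0,h]$) plus the absolutely continuous part just computed; the total absolutely continuous mass on $(\omega_1,\infty)$ tends to zero with $h$, so no continuous drift term can survive in the limit. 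Applied to $\varphi \in C_c^\infty(\mathbb{R})$, dominated convergence in
\[
\frac{P^h\varphi(y)-\varphi(y)}{h} \;=\; \frac{1}{h}\int_y^\infty \bigl(\varphi(z)-\varphi(y)\bigr)\,\mathbb{P}[\Psi^\phi(h)\in dz \mid \Psi^\phi(0)=y]
\]
then yields $\mathcal{A}^\phi \varphi(y) = \int_y^\infty (\varphi(z)-\varphi(y))\,n^\phi(y,z)\,dz$, as claimed.

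The main obstacle is analytic rather than structural: the limit exchange above, together with the differentiation under the integral sign already employed in Lemma \ref{joint} and Theorem \ref{main}, requires quantitative smoothness, integrability, and decay estimates on $f$, $\Phi$, $j$, and $\tilde{\Phi}$ near the absorbing boundary $\{y=0\}$ and at spatial infinity. These estimates cannot be read off from Airy asymptotics in the general convex setting, and are precisely what Section \ref{sect4} is devoted to establishing; they will be invoked both to dominate the integrand and to ensure the continuity in $h$ that makes the passage to the limit legitimate.
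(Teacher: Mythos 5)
Your proposal follows essentially the same route as the paper's proof: integrate the joint density of Theorem~\ref{main} over $x^*\in(0,r^h(\omega_2))$, apply the change of variables $\omega=(r^h)^{-1}(x^*)$ together with the shift $y\mapsto y-x^*$ to produce the stated transition density, and then obtain the generator via the expansion $(r^h)'(\omega)=\phi''(\omega)h+O(h^2)$. The paper is terser about why no drift term appears; your observation that the absolutely continuous mass on $(\omega_1,\infty)$ is $O(h)$ (forcing the surviving mass onto an atom at $\omega_1$, by monotonicity of $\Psi^{\phi}$) is a small but welcome elaboration of the ``pure-jump'' claim, and your deferral of the dominated-convergence and continuity justifications to Section~\ref{sect4} matches the paper's stated convention for Section~\ref{sect3}.
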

\begin{proof}
By integrating the formula in Theorem \ref{main} with respect to $x^*$ between and $0$ and $r^{h}(\omega_2)$ (as $X^h$ is pointwise at most $r^h$), we get that

\begin{align*}
\mathbb{P}[\text{argmax}^{+}_{\omega \ge \omega_1} X^h(\omega) \in d\omega_2]=
2\frac{j(\omega_2-h)}{j(\omega_1)}\int_{0}^{r^h(\omega_2)}\int_{-\infty}^{x^*}\Phi(\omega^{*}-h,y-x^*,\omega_2-h) \times \\
\tilde{\Phi}(-\omega^*,y-x^*,-\omega_1)dydx^*
\end{align*}
Now it suffices to do the change of variables $y'=y-x^*$ and $\omega=(r^h)^{-1}(x^*)$ to get the transition density. As for the generator part, it suffices to do the following Taylor expansion for $h \rightarrow 0$
\begin{align*}
(r^h)'(\omega)=\phi''(\omega) h +O(h^2)
\end{align*}
\end{proof}
\begin{remark}
In the next section, we will greatly simplify this expression of the generator by giving explicit formulas of $K^{\phi}$ and $j$ in Proposition \ref{kernel} and Theorem \ref{jou} respectively.  
\end{remark}
\section{Regularity of the transition functions and explicit formulas}\label{sect4}
The goal of this section is to prove the regularity of the transition density $f(s,x,t,y)$ away from the line $\{t=s\}$, so that we can justify all the operations we did in the previous section and to deduce along the way explicit formulas for the jump kernel of the process $\Psi^{\phi}$.\\

Processes such as the three-dimensional Bessel process, the three-dimensional Bessel bridges, and the Brownian motion killed at zero will be mentioned in some of the results of this section. We refer the unfamiliar reader to \cite{yorrevuz}[Chapters 3,6,11] for basic facts about these processes. \\

The following proposition gives a closed formula for the density $f$.
\begin{prop}\label{expression-f}
Let $x,y<0$ and $t >s$, the density $f$ is given by the formula 
\begin{align*}
f(s,x,t,y)=G(s,x,t,y)\text{exp}\left(-\phi'(t)y+\phi'(s)x-\frac{1}{2}\int_{s}^{t} (\phi'(u))^2du \right )\times\\
\mathbb{E}\left[\text{exp} \left(-\int_{s}^{t} \phi''(u)B(u)du\right) \vert B(s)=-x, B(t)=-y\right]
\end{align*}
where $B$ is a three-dimensional Bessel process, and $G$ is the transition density function of the Brownian motion killed at zero, given explicitly by 
\begin{equation*}
G(s,x,t,y)=\frac{1}{\sqrt{2\pi(t-s)}}\left(e^{-\frac{(x-y)^2}{2(t-s)}}-e^{-\frac{(x+y)^2}{2(t-s)}}\right)
\end{equation*}
\end{prop}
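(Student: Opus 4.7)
The plan is a Girsanov argument: rewrite the drifted process $S$ as a standard Brownian motion under a tilted measure, use integration by parts to pull out a deterministic boundary term from the stochastic integral appearing in the Radon--Nikodym derivative, and finally identify the resulting Brownian motion conditioned to stay below $0$ with fixed endpoints as the reflection of a three-dimensional Bessel bridge.

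First I would introduce a probability measure $Q$ under which $Y(u):=S(u)$ is, on $[s,t]$, a standard Brownian motion starting from $x$. Since $\phi\in C^2$ and $[s,t]$ is compact, $\phi'$ is bounded on $[s,t]$, so Novikov's condition is automatic and Girsanov gives
$$\frac{dP}{dQ}\bigg|_{\mathcal{F}_t}=\exp\left(-\int_s^t \phi'(u)\,dY(u)-\tfrac12\int_s^t \phi'(u)^2\,du\right).$$
Next, since $\phi'$ is $C^1$, integration by parts (Itô's formula for a deterministic absolutely continuous integrand) gives
$$\int_s^t \phi'(u)\,dY(u)=\phi'(t)Y(t)-\phi'(s)Y(s)-\int_s^t \phi''(u)Y(u)\,du,$$
and on the event $\{Y(s)=x,\ Y(t)=y\}$ the right-hand side is $\phi'(t)y-\phi'(s)x-\int_s^t \phi''(u)Y(u)\,du$. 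Substituting into the Radon--Nikodym derivative produces exactly the deterministic prefactor $\exp\bigl(-\phi'(t)y+\phi'(s)x-\tfrac12\int_s^t \phi'(u)^2\,du\bigr)$ appearing in the statement.

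With this prefactor extracted, I would compute the density
$$f(s,x,t,y)\,dy=\mathbb{E}_P\bigl[\mathbf{1}\{Y(t)\in dy,\ \max_{s\le u\le t}Y(u)<0\}\mid Y(s)=x\bigr]$$
as an expectation under $Q$. Since $Y$ is standard Brownian motion under $Q$, the density of $\{Y(t)\in dy\}$ together with the barrier event $\{Y(u)<0\ \forall u\in[s,t]\}$ starting from $x<0$ is the classical reflection-principle formula $G(s,x,t,y)\,dy$. Factoring this density out turns the remainder into the conditional expectation of $\exp\bigl(\int_s^t \phi''(u)Y(u)\,du\bigr)$ under $Q$, given $Y(s)=x$, $Y(t)=y$, and $Y(u)<0$ throughout $[s,t]$.

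The only subtle step is the identification of this conditioned process. By the standard Doob $h$-transform description of Brownian motion conditioned to stay negative, with $h(z)=-z$ on $(-\infty,0)$ (equivalent after reflection to the usual $h(z)=z$ on $(0,\infty)$), the process $B(u):=-Y(u)$ is a three-dimensional Bessel bridge from $-x$ to $-y$ on $[s,t]$; see \cite{yorrevuz}, Chapter XI. Substituting $Y=-B$ flips the sign of the Riemann integral and yields exactly the formula in the proposition. The main potential obstacle is the justification of this last identification in the form used here, but it is classical rather than a new computation, so the proof reduces to assembling these ingredients.
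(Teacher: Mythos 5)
Your proposal follows essentially the same route as the paper: Girsanov to pass to a measure under which $S$ becomes a standard Brownian motion, mollification to produce the killed-process density, identification of the conditioned bridge with the reflection of a three-dimensional Bessel bridge, and integration by parts to convert $\int_s^t\phi'(u)\,dY(u)$ into a boundary term plus $\int_s^t\phi''(u)Y(u)\,du$. The only difference is cosmetic: you perform the integration by parts immediately after Girsanov, whereas the paper defers it to the very end after the Bessel-bridge identification; the content is identical.
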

\begin{proof}
The process $S$ can be expressed as 
\begin{equation*}
S(t)=W(t)-\phi(t)=W(t)-\int_{s}^{t}\phi'(u)du -\phi(s)
\end{equation*}
Thus by Girsanov theorem, $S$ is a Brownian motion under the measure $\mathbb{Q}$ with Radon-Nikodym derivative given by
\begin{equation*}
\frac{d \mathbb{Q}}{d \mathbb{P}} _{| \mathcal{F}_t}=\text{exp} \left (\int_{s}^{t} \phi'(u)dW_u -\frac{1}{2} \int_{s}^{t} (\phi'(u))^2 du \right)
\end{equation*}
where $\mathcal{F}_t:=\sigma \{ W(u) : s \le u \le t \}$ is the canonical filtration of $W$. Thus for any function $F$ we have that 
\begin{equation*}
\mathbb{E}[F(S(t))\ind_{\{\max\limits_{s \le u \le t} S(u)<0\}} \vert S(s)=x]=\mathbb{E}[Z(t)F(W(t))\ind_{\{\max\limits_{s \le u \le t} W(u)<0\}} \vert W(s)=x]
\end{equation*}
where 
\begin{equation*}
Z(t):=\exp \left (-\int_{s}^{t} \phi'(u)dW_u-\frac{1}{2} \int_{s}^{t} \phi'(u)^2 du \right)
\end{equation*}
In particular for $F=F_{\epsilon}:=\frac{1}{2\epsilon} \ind_{[y-\epsilon,y+\epsilon]}$, we have that 
\begin{align*}
f(s,x,t,y)&=\lim_{\epsilon \rightarrow 0} \mathbb{E}[F_{\epsilon}(S(t))\ind_{\{\max\limits_{s \le u \le t} S(u)<0\}} \vert S(s)=x]\\
&=\lim_{\epsilon \rightarrow 0} \frac{1}{2\epsilon} \int_{y-\epsilon}^{y+\epsilon} \mathbb{E}[Z(t)\ind_{\{W(t) \in dz, \max\limits_{s \le u \le t} W(u)<0\}} \vert W(s)=x]
\end{align*}
Now if we denote by $W^{\partial}$ the Brownian motion killed at zero whose law is defined as 
\begin{align*}
\mathbb{E}[F(W^{\partial}(t))\vert W^{\partial}(s)=x]=\mathbb{E}[F(W(t))\ind_{\{\max_{s \le u \le t} W(u)<0\}} \vert W(s)=x]
\end{align*}
Thus 
\begin{align*}
f(s,x,t,y)&=\lim_{\epsilon \rightarrow 0} \frac{1}{2\epsilon} \int_{y-\epsilon}^{y+\epsilon} \mathbb{E}[Z^{\partial}(t) \vert W^{\partial}(t)=y,W^{\partial}(s)=x]p_{t-s}^{\partial}(x,z)dz \\
&=p_{t-s}^{\partial}(x,y)\mathbb{E}[Z^{\partial}(t) \vert W^{\partial}(t)=y, W^{\partial}(s)=x]
\end{align*}
where $Z^{\partial}$ is the same as $Z$ with $W$ replaced by $W^{\partial}$, and $p_{t}^{\partial}(x,y)$ is the transition density function of the process $W^{\partial}$. However it is a well-known fact that $p_{t-s}^{\partial}(x,y)=G(s,x,t,y)$, and the law of the Brownian motion killed at zero between $s$ and $t$ conditioned on its extreme values is the law of the reflection of a three-dimensional Bessel bridge between $(s,-x)$ and $(t,-y)$ (as our killed Brownian motion stays negative and the Bessel bridges are by definition positive). Finally, by using an integration by parts we have that
\begin{equation*}
d(B(u)\phi'(u))=\phi'(u)dB(u)+\phi''(u)B(u)du
\end{equation*}
Integrating between $s$ and $t$, we get the desired result. 
\end{proof}
\begin{remark}
From the last proposition, one can readily see that for fixed $s$ and $x$
\begin{align*}
0 \le f(s,x,t,y) \le C(t)e^{-A(t)y^2} \text{ for all } y 
\end{align*}
where $C$ and $A$ are locally bounded, and $A$ is locally bounded from below by a positive constant.
\end{remark}
Let us now prove that $f$ is smooth. First of all, one can extend $f$ to the positive line as well by defining
\begin{align*}
f(s,x,t,y)=-f(s,x,t,-y), ~~ y>0
\end{align*}
Then $f$ verifies in the distribution sense the following PDE (Kolmogorov forward equation) 
\begin{equation}\label{pde}
\partial_t f-\frac{1}{2} \partial^{2}_y f=\phi'(t)\partial_yf \text{ on } (t,y) \in (s,+\infty) \times \mathbb{R}
\end{equation}
and with boundary conditions $f(s,x,s,\cdot)=\delta_x - \delta_{-x}$, and obviously $f(s,x,t,0)=0$. Now, it is well-known that the function $G$ that we defined in Proposition \ref{expression-f} verifies the heat equation
\begin{align*}
\partial_t G -\frac{1}{2} \partial_y ^2 G =0
\end{align*}
with the same boundary conditions as $f$. Moreover, if one defines the function $\hat{G}$ as
\begin{align*}
\hat{G}(s,x,t,y)=\frac{1}{\sqrt{2\pi (t-s)}} e^{-\frac{(x-y)^2}{2(t-s)}}
\end{align*}
it is also a solution for the heat equation but with boundary condition $\hat{G}(s,x,s,\cdot)=\delta_x$. Thus, in order to study the regularity properties of the solution to \eqref{pde}, one might use Duhamel's principle to get a representation formula for $f$. More precisely, we shall prove the following theorem 
\begin{thm}\label{representation}
Fix $s,x \in \mathbb{R}$. There exists a function $h \in C([s,+\infty),L^1(\mathbb{R}) \cap L^{\infty}(\mathbb{R}))$ (where here $L^1(\mathbb{R}) \cap L^{\infty}(\mathbb{R})$ is the space of continuous functions on the real line that are uniformly bounded and absolutely integrable), such that 
\begin{align*}
h(t,y)=\int_{s}^{t} \int_{\mathbb{R}} \phi'(u)\hat{G}(u,z,t,y)\partial_zG(s,x,u,z)dzdu\\
-\int_{s}^{t}\int_{\mathbb{R}}\phi'(u)\partial_z \hat{G}(u,z,t,y)h(u,z)dzdu
\end{align*}
Furthermore, $h$ is smooth. 
\end{thm}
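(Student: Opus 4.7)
The plan is to interpret the claimed identity as a linear Volterra integral equation
\begin{equation*}
h \;=\; F + \mathcal{K}h,
\end{equation*}
where
\begin{equation*}
F(t,y) \;:=\; \int_s^t\!\!\int_{\mathbb{R}} \phi'(u)\,\hat{G}(u,z,t,y)\,\partial_z G(s,x,u,z)\,dz\,du,
\end{equation*}
and $\mathcal{K}$ is the linear operator $\mathcal{K}g(t,y) := -\int_s^t\!\!\int_{\mathbb{R}} \phi'(u)\,\partial_z \hat{G}(u,z,t,y)\,g(u,z)\,dz\,du$. I would construct $h$ by Picard iteration in the Banach space $X_T := C([s,T];\,L^1(\mathbb{R})\cap L^\infty(\mathbb{R}))$ for arbitrary finite $T>s$, and then let $T\to\infty$.

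First I would verify that $F \in X_T$. Since $\partial_z G(s,x,u,\cdot)$ is explicit and, for any $u>s$, is a Schwartz function with controlled $L^1$ and $L^\infty$ norms, and since $\hat{G}(u,\cdot,t,y)$ is a Gaussian probability density in $z$ (so convolution against it preserves both $L^1$ and $L^\infty$ norms), the local boundedness of $\phi'$ yields $F \in X_T$; smoothness of $F$ on $\{t>s\}$ follows by differentiating under the integral, using smoothness of $\hat{G}$ away from $u=t$. The key estimate for the operator $\mathcal{K}$ is the elementary bound
\begin{equation*}
\int_{\mathbb{R}} \big|\partial_z \hat{G}(u,z,t,y)\big|\,dz \;\le\; \frac{C}{\sqrt{t-u}},
\end{equation*}
together with the symmetric estimate with $dz$ replaced by $dy$. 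This gives $\|\mathcal{K}g(t,\cdot)\|_{\infty} \le C\int_s^t (t-u)^{-1/2}\|g(u,\cdot)\|_{\infty}\,du$, and analogously in $L^1$. Iterating $\mathcal{K}$ produces the familiar gamma-function smoothing: $\|\mathcal{K}^n F\|_{X_T} \le C^n(T-s)^{n/2}/\Gamma(n/2+1)$, so the Neumann series $h := \sum_{n\ge 0}\mathcal{K}^n F$ converges absolutely in $X_T$ and is the unique solution of $h = F + \mathcal{K}h$.

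For smoothness I would bootstrap directly from the fixed-point equation. Split the integral in $\mathcal{K}h$ at the midpoint $u = (s+t)/2$: on $[s,(s+t)/2]$ the kernel $\partial_z \hat{G}(u,\cdot,t,y)$ is smooth in $(t,y)$ with Gaussian decay, so derivatives pass freely through the integral; on $[(s+t)/2,t]$ I would integrate by parts in $z$ to shift the $\partial_z$ onto $h(u,\cdot)$, turning the equation into a Volterra equation of the same type but with $\partial_y h$ in place of $h$. This shows that $\partial_y h$ satisfies a Volterra equation of the same form, with a new inhomogeneity built from derivatives of $F$; iterating, each spatial derivative of $h$ solves such an equation, and joint $(t,y)$-smoothness follows from the PDE \eqref{pde} that $h$ itself satisfies.

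The main obstacle is the endpoint singularity of $\partial_z \hat{G}(u,z,t,y)$ as $u\uparrow t$, which is why the dual $L^1\cap L^\infty$ framework is needed: integrating out one spatial variable first converts the formally $(t-u)^{-1}$-singular pointwise bound into the integrable $(t-u)^{-1/2}$ bound above, and it is precisely this integrability that fuels both the contraction and the bootstrap. Everything else is routine: uniqueness on each $[s,T]$ is immediate from Gronwall applied to $\|h_1-h_2\|_{X_T}$, and the solutions on successive intervals glue together because $F$ depends only on the fixed data $(s,x)$.
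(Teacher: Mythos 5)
Your proposal is correct and follows the same overall architecture as the paper -- a Volterra fixed-point equation in the Banach space $C([s,T];\,L^1(\mathbb{R})\cap L^\infty(\mathbb{R}))$, powered by the estimate $\int_{\mathbb{R}}|\partial_z\hat{G}(u,z,t,y)|\,dy \lesssim (t-u)^{-1/2}$ -- but you close the existence argument by a genuinely different device. The paper applies the Banach contraction theorem, which only yields a fixed point on a short interval $[s,T]$ where the Lipschitz constant $\asymp\sqrt{T-s}$ is small, and then must run a continuation argument (define $T^*$, show Cauchy as $t_m\uparrow T^*$ via Gronwall, extend past $T^*$ by contraction again) to reach all of $[s,\infty)$. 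You instead sum the Neumann series $\sum_n\mathcal{K}^nF$ and observe that the weakly singular kernel produces the beta-function gain $\|\mathcal{K}^nF\|\lesssim C^n(T-s)^{n/2}/\Gamma(n/2+1)$, so the series converges absolutely on any finite $[s,T]$ with no smallness restriction; this eliminates the continuation step entirely and is the cleaner route. On smoothness, the paper is terse (``follows readily from that of the Green function and dominated convergence''), whereas you sketch a bootstrap: split the $u$-integral at $(s+t)/2$, differentiate the far-from-diagonal piece freely, and on the near-diagonal piece integrate by parts in $z$ to trade $\partial_z\hat{G}$ for $\hat{G}\,\partial_z h$, producing a Volterra equation of the same type for $\partial_y h$. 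The only soft spot is that as written this bootstrap is slightly circular -- it presupposes $h(u,\cdot)$ is differentiable on $((s+t)/2,t)$ to perform the integration by parts there -- and should be arranged as an induction in time (establish $C^1$ on $(s,s+\delta]$ first, then propagate forward), or alternatively by differentiating the Neumann series term by term with uniform estimates. But this is a presentation issue, not a gap in the idea, and your version is in fact more honest about the $u\uparrow t$ singularity than the paper's one-line dismissal.
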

\begin{proof}
Let us fix $T>s$. Define the functional $\Xi^T$ from $\mathcal{C}_T:=C([s,T],L^1(\mathbb{R})\cap L^{\infty}(\mathbb{R}))$ into itself equipped with the norm
\[
||h||_{\mathcal{C}_T}:=\sup_{s \le t \le T} ||h(t)||_{L^{\infty}}+||h(t)||_{L^1}
\]
by
\begin{align*}
\Xi^T[h](t,y)=\int_{s}^{t} \int_{\mathbb{R}} \phi'(u)\hat{G}(u,z,t,y)\partial_zG(s,x,u,z)dzdu\\
- \int_{s}^{t}\int_{\mathbb{R}}\phi'(u)\partial_z\hat{G}(u,z,t,y)h(u,z)dzdu
\end{align*}
It is clear that $\Xi^T$ sends $\mathcal{C}_T$ to itself due to the growth rate of the Green functions $G$ and $\hat{G}$ at infinity in space. Moreover we have that for any two functions $h$ and $\tilde{h}$ in $\mathcal{C}_T$
\begin{align*}
|| \Xi^T[h](t,.)-\Xi^T[\tilde{h}](t,.) ||_{L^1} \le \int_{s}^{t} \vert \phi'(u)\vert du \int_{\mathbb{R}} |h(u,z)-\tilde{h}(u,z)|dz  \int_{\mathbb{R}} |\partial_z\hat{G}(u,z,t,y)|dy
\end{align*}
Now we see that 
\begin{align*}
\partial_z\hat{G}(u,z,t,y)=\frac{y-z}{\sqrt{2\pi (t-u)^3}} e^{-\frac{(y-z)^2}{2(t-u)}}\end{align*}
Hence 
\begin{align*}
\int_{\mathbb{R}} |\partial_z \hat{G}(u,z,t,y)|dy \le \frac{2}{\sqrt{2\pi(t-u)^3}} \int_{0}^{\infty} \omega e^{-\frac{\omega^2}{2(t-u)}}d\omega=\frac{2}{\sqrt{2\pi(t-u)}}
\end{align*}
Thus 
\begin{align*}
|| \Xi^T[h](t,.)-\Xi^T[\tilde{h}](t,.) ||_{L^1} \le \frac{4\sqrt{T-s} \sup_{u \in [s,T]} | \phi'(u) |}{\sqrt{\pi}} ||h-\tilde{h}||_{\mathcal{C}_T}
\end{align*}
A similar bound is found for the $L^{\infty}$ norm. Thus, for $T$ close enough to $s$, the operator $\Xi^{T}$ becomes a contraction, and thus by Picard theorem, it admits a unique fixed point. \\
Now define
\begin{align*}
T^*=\sup \{ T \ge s : \exists h \in \mathcal{C}_T \text{ such that } \Xi^T[h]=h \}
\end{align*}
Suppose that $T^{*}<\infty$, then it is easy to see by Gronwall inequality that for any sequence $(t_m)_{m \in \mathbb{N}}$ such that $t_m \uparrow T^*$, the sequence $(h(t_m,\cdot))_{m \in \mathbb{N}}$ is Cauchy in $L^{\infty}(\mathbb{R}) \cap L^1(\mathbb{R})$ and thus converge strongly to a unique limit that we denote $h(T^{*},\cdot)$. This extension thus belongs to $\mathcal{C}_{T^*}$. However, for small $\epsilon>0$, one can further extend the fixed point $h$ to $\mathcal{C}_{T^*+\epsilon}$ by the same contraction argument. This contradicts the definition of $T^*$, and thus $T^*=\infty$ from which follow the existence of a global solution. The smoothness of $h$ follows readily from that of the Green function $\hat{G}$ and the dominated convergence theorem. 
\end{proof}
We are now ready to prove the following result 
\begin{thm}
The function $f-G$ is everywhere smooth in the variables $(t,y)$, in particular the function $f$ is smooth away from $\{t=s\}$. 
\end{thm}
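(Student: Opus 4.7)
The plan is to identify $f-G$ as the unique solution of the Volterra fixed-point equation $\Xi^T[h]=h$ constructed in Theorem \ref{representation}, so that smoothness of $h$ transfers to $f-G$. Since $G$ is itself smooth away from $\{t=s\}$, this also yields smoothness of $f$ there, which is the second half of the claim.

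The first step is to derive the PDE satisfied by $f-G$. With $f$ extended by odd reflection in $y$, both $f$ and $G$ share the initial condition $\delta_x-\delta_{-x}$; $f$ satisfies \eqref{pde}, while $G$ solves the free heat equation $\partial_t G=\tfrac12\partial_y^2 G$. Subtracting,
\begin{align*}
\partial_t(f-G)-\tfrac12\partial_y^2(f-G)=\phi'(t)\,\partial_y f,
\end{align*}
with zero initial data. Duhamel's principle against the whole-line heat kernel $\hat G$, followed by an integration by parts in $z$ (legitimate because of the Gaussian tails of $f$ noted in the remark after Proposition \ref{expression-f}), gives
\begin{align*}
(f-G)(t,y)=-\int_s^t\int_{\mathbb R}\phi'(u)\,\partial_z\hat G(u,z,t,y)\,f(s,x,u,z)\,dz\,du.
\end{align*}
Splitting $f=G+(f-G)$ on the right-hand side and integrating by parts back on the $G$-term turns this into $\Xi^T[f-G](t,y)$, so $f-G$ is indeed a fixed point of $\Xi^T$.

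The second step is to check that $f-G$ lies in the Banach space $\mathcal C_T$ for every $T>s$, so that the uniqueness of the fixed point from the contraction-plus-Gronwall argument in Theorem \ref{representation} applies. The Gaussian bound on $f$ from Proposition \ref{expression-f}, together with the explicit form of $G$, gives uniform $L^1\cap L^\infty$ control on compact subsets of $(s,T]$. The delicate point is continuity in the norm $\|\cdot\|_{L^1}+\|\cdot\|_{L^\infty}$ at $t\downarrow s$: individually, $f(t,\cdot)$ and $G(t,\cdot)$ converge only weakly to $\delta_x-\delta_{-x}$, but their singular parts cancel in the difference. Using the explicit representation in Proposition \ref{expression-f}, the Girsanov prefactor $\exp(-\phi'(t)y+\phi'(s)x-\tfrac12\int_s^t\phi'(u)^2du)$ tends to $1$ and the three-dimensional Bessel-bridge expectation tends to $1$, both uniformly on the bulk of the Gaussian mass of $G$, so $f=G(1+o(1))$ in a quantitative sense that forces $\|f(t,\cdot)-G(t,\cdot)\|_{L^1\cap L^\infty}\to 0$ as $t\downarrow s$. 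Granted this, uniqueness in Theorem \ref{representation} identifies $f-G=h$, and smoothness of $h$ completes the proof.

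The main obstacle is precisely the continuity-at-$t=s$ verification just sketched: one has to show cleanly that the two singular parts of $f$ and $G$ cancel uniformly in the $L^1\cap L^\infty$ norm, rather than merely in a distributional sense. Once that estimate is in hand, everything else (Duhamel, integration by parts, smoothness of $h$) is routine. An alternative route, should this step prove technically awkward, is to localize in time: prove smoothness on $(s+\delta,\infty)\times\mathbb R$ for every $\delta>0$ by applying the same Duhamel argument started at $s+\delta$ (where $f(s+\delta,\cdot)$ is genuinely a bounded integrable function, making membership in $\mathcal C_T$ automatic) and then send $\delta\downarrow 0$.
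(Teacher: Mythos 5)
Your algebraic reduction --- Duhamel against $\hat G$ plus an integration by parts in $z$ to show that $f-G$ satisfies the fixed-point equation $\Xi^T[h]=h$ --- is correct, and the localization fallback you mention at the end is sound. But the main route, invoking uniqueness of the fixed point, requires $f-G\in\mathcal C_T = C([s,T],L^1\cap L^\infty)$, and the heuristic you give for continuity at $t=s$ does not survive closer inspection in $L^\infty$. As $t\downarrow s$ the Girsanov prefactor in Proposition \ref{expression-f} tends to $e^{\phi'(s)(x-y)}$, not to $1$; on the diffusive scale $y = x + a\sqrt{t-s}$ this gives
\begin{align*}
f(t,y)-G(t,y)\;\approx\;G(t,y)\left(e^{-\phi'(s)a\sqrt{t-s}}-1\right)\;\longrightarrow\;-\frac{\phi'(s)\,a\,e^{-a^2/2}}{\sqrt{2\pi}},
\end{align*}
so $\|f(t,\cdot)-G(t,\cdot)\|_{L^\infty}\to |\phi'(s)|e^{-1/2}/\sqrt{2\pi}>0$ whenever $\phi'(s)\ne 0$. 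The two singular parts do cancel in $L^1$ and in the weak (tested-against-Schwartz-functions) topology, but they do not cancel in $L^\infty$, so you cannot place $f-G$ in $\mathcal C_T$ with value $0$ at $t=s$, and the contraction uniqueness does not apply as stated: conflating ``the multiplicative correction is $1+O(\sqrt{t-s})$ on the Gaussian bulk'' with ``the additive error vanishes in $L^\infty$'' is precisely where the argument breaks, since $G$ itself is of size $(t-s)^{-1/2}$ there.

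The paper avoids this obstacle by working in a coarser uniqueness class. It introduces $q:=h+G$, checks that $q$ solves the same PDE \eqref{pde} with the same distributional boundary datum $\delta_x-\delta_{-x}$, sets $v:=f-q$, and proves $v\equiv 0$ via a spatial Fourier transform in the sense of tempered distributions: the transformed ODE gives $\hat v(t,k)e^{\frac12 k^2t - i\phi(t)k}$ constant in $t$, and one only needs $v(t,\cdot)\to 0$ against Schwartz test functions as $t\downarrow s$ --- a weak convergence that does hold and is verified directly using the killed-diffusion representation. That is far more forgiving than the $L^1\cap L^\infty$ convergence your route demands. Your localization fallback --- restart the Duhamel/Picard scheme from $s+\delta$, where $f(s+\delta,\cdot)$ is a genuine $L^1\cap L^\infty$ function, then let $\delta\downarrow 0$ --- does deliver smoothness of $f$ on $(s,\infty)\times\mathbb R$, which is the operative ``in particular'' half of the statement and what the rest of the paper actually uses. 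But as written your main route has a real gap at $t=s$, and the identification $f-G=h$ needs either that localization or a distributional uniqueness argument in place of fixed-point uniqueness in $\mathcal C_T$.
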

\begin{proof}
Define the function $q$ by 
\begin{align*}
q(s,x,t,y)=h(t,y)+G(s,x,t,y)
\end{align*}
where $h$ is the global solution from Theorem \ref{representation}.
By integration by parts we have that
\begin{align*}
q(s,x,t,y)=G(s,x,t,y)+\int_{s}^{t} \int_{\mathbb{R}} \phi'(u)\hat{G}(u,z,t,y)\partial_zG(s,x,u,z)dzdu+\\
\int_{s}^{t} \int_{\mathbb{R}} \phi'(u)\hat{G}(u,z,t,y) \partial_zh(u,z)dudz\\
= G(s,x,t,y)+\int_{s}^{\infty} \int_{\mathbb{R}} \phi'(u) \ind_{\{t \in (u,+\infty)\}}\hat{G}(u,z,t,y)\partial_z q(u,z) du dz
\end{align*}
Now it suffices to see that
\begin{align*}
(\partial_t -\frac{1}{2} \partial^2_{y})(\ind_{t \in (u,+\infty)} \hat{G}(u,z,t,y))=\delta_0(t-u)\hat{G}(u,z,u,y)=\delta_0(t-u)\delta_0(y-z)
\end{align*}
and thus the function $q$ verifies the PDE \eqref{pde} with the boundary conditions $q(s,x,s,.)=\delta_x-\delta_{-x}$. The result now would follow if we can prove that $f=q$. Consider the function $v:=f-q$, it verifies the PDE \eqref{pde} with vanishing initial condition. The growth condition of $v$ at infinity in space ensures that $v$ can be viewed as a tempered distribution. By taking the Fourier transform in space in the PDE \eqref{pde} we get that 
\begin{align*}
\partial_t \hat{v}(t,k)=\left(-\frac{1}{2}k^2+i\phi'(t)k\right)\hat{v}(t,k)
\end{align*}
Thus 
\begin{align*}
\partial_t(\hat{v}(t,k)e^{\frac{1}{2}k^2t-i\phi(t)k})=0
\end{align*}
which means that the distribution $\hat{v}(t,k)e^{\frac{1}{2}k^2 t-i\phi(t) k}$ is constant along the time variable $t$. Moreover, we also have that
\begin{align*}
\lim_{t \rightarrow s} v(t,\cdot)=0
\end{align*}
in the tempered distribution sense. Indeed for any $\varphi$ in the Schwartz space $\mathcal{S}(\mathbb{R})$, if we denote by $S^{\partial}$ is the diffusion $S$ killed at zero we have that
\begin{align*}
\lim_{t \to s} \int_{\mathbb{R}} \varphi(y)v(t,y)dy=\lim_{t \to s} \left [\left(\mathbb{E}[\varphi(S^{\partial}(t)) \vert S^{\partial}(s)=x]-\mathbb{E}[\varphi(W^{\partial}(t)) \vert W^{\partial}(s)=x]\right ) \right.\\
\left.-\left(\mathbb{E}[\varphi(-S^{\partial}(t)) \vert S^{\partial}(s)=x]-\mathbb{E}[\varphi(W^{\partial}(t)) \vert W^{\partial}(s)=-x]\right )\right.\\
\left.-\int_{\mathbb{R}} \varphi(y)h(t,y)dy\right]=0
\end{align*}
as $h(s,\cdot)=0$ and by using the dominated convergence theorem. Thus by continuity of the Fourier transform, one deduces that $v$ is zero everywhere, and hence $q=f$ as desired.
\end{proof}
Let us introduce now a function that is going to play a fundamental role in our calculations. Define $g$ by 
\begin{equation}\label{expression-g}
g(s,x,t,y)=G(s,x,t,y)\mathbb{E}\left[\text{exp}\left(-\int_{s}^{t} \phi''(u)B(u)du \right) \vert B(s)=-x,B(t)=-y\right]
\end{equation}
for $x,y<0$ and $t \ge s$, where $B$ is a three-dimensional Bessel process. Because $f$ is smooth away from $\{t=s\}$, the same holds for $g$. We have then the following lemma.
\begin{lem}
The function $g$ verifies the following PDE 
\begin{equation}\label{pde-g}
\partial_t g=\frac{1}{2} \partial^2_{y} g +\phi''(t)yg
\end{equation}
for $(t,y) \in (s,+\infty)\times(-\infty,0)$.
\end{lem}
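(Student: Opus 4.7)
The plan is to reduce the PDE for $g$ to the one already known for $f$ via the explicit factorization supplied by Proposition \ref{expression-f}. That proposition writes
\[
f(s,x,t,y) = g(s,x,t,y)\, E(s,x,t,y), \qquad E(s,x,t,y) := \exp\!\left(-\phi'(t)y + \phi'(s)x - \tfrac12\!\int_s^t \phi'(u)^2\,du\right),
\]
so equivalently $g = f/E$. Since $f$ was just shown to be smooth away from $\{t=s\}$ and $E$ is smooth and nowhere zero, $g$ is smooth on $(s,\infty)\times(-\infty,0)$, legitimizing every partial derivative taken below.

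The next step is a direct chain-rule computation, using the forward Kolmogorov equation \eqref{pde} satisfied by $f$, namely
\[
\partial_t f \;=\; \tfrac12\,\partial_y^2 f + \phi'(t)\,\partial_y f.
\]
Substituting $f = gE$ and differentiating gives
\[
\partial_t(gE) = E\,\partial_t g + gE\bigl(\phi''(t)\,y + \tfrac12\phi'(t)^2\bigr),
\]
\[
\partial_y(gE) = E\bigl(\partial_y g + \phi'(t)\,g\bigr),\qquad \partial_y^2(gE) = E\bigl(\partial_y^2 g + 2\phi'(t)\,\partial_y g + \phi'(t)^2\,g\bigr).
\]
Plugging these into \eqref{pde} and dividing the resulting identity by $E>0$, the terms $\phi'(t)\partial_y g$ and $\tfrac12 \phi'(t)^2 g$ cancel between the two sides, and one is left with
\[
\partial_t g \;=\; \tfrac12\,\partial_y^2 g + \phi''(t)\,y\,g,
\]
which is precisely \eqref{pde-g}.

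There is no real obstacle once Proposition \ref{expression-f} and the smoothness of $f$ are in hand; the genuine work of the lemma has already been done in those two results. The only point where care is needed is that the manipulations above require termwise differentiation of $f = gE$, which is justified by the smoothness of $g$ on $(s,\infty)\times(-\infty,0)$ inherited from that of $f$ and $E^{-1}$.
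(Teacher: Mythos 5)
Your approach is sound and takes a genuinely different route from the paper. The paper proves the PDE for $g$ by applying It\^o's formula to the semimartingale $\varphi(t,W(t))\exp\left(\int_s^t \phi''(u)W(u)\,du\right)$, taking expectations on the killed Brownian motion, and extracting the PDE first in the weak sense (a Feynman--Kac style argument from scratch). You instead pull the PDE for $g$ out of the already-established forward Kolmogorov equation \eqref{pde} for $f$ via the algebraic factorization $f=gE$. Your route is shorter and more elementary once the smoothness of $f$ is in hand, and it makes transparent exactly how the drift term $\phi'(t)\partial_y f$ becomes the potential term $\phi''(t)y\,g$; the paper's probabilistic derivation is more self-contained but does more work.

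However, your displayed intermediate formulas contain sign errors that, taken at face value, do \emph{not} produce the cancellation you describe. Since
\[
E(s,x,t,y)=\exp\!\left(-\phi'(t)y+\phi'(s)x-\tfrac12\int_s^t\phi'(u)^2\,du\right),
\]
we have $\partial_y E=-\phi'(t)E$ and $\partial_t E=-\bigl(\phi''(t)y+\tfrac12\phi'(t)^2\bigr)E$. The correct derivatives are therefore
\[
\partial_t(gE)=E\partial_t g - gE\bigl(\phi''(t)y+\tfrac12\phi'(t)^2\bigr),\quad
\partial_y(gE)=E\bigl(\partial_y g-\phi'(t)g\bigr),
\]
\[
\partial_y^2(gE)=E\bigl(\partial_y^2 g-2\phi'(t)\partial_y g+\phi'(t)^2 g\bigr).
\]
Plugging these (not the versions you wrote) into $\partial_t f=\tfrac12\partial_y^2 f+\phi'(t)\partial_y f$ and dividing by $E$, the $\phi'(t)\partial_y g$ terms cancel on the right-hand side and the $\tfrac12\phi'(t)^2 g$ terms cancel between the two sides, leaving precisely $\partial_t g=\tfrac12\partial_y^2 g+\phi''(t)y\,g$. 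So the conclusion and the claimed cancellation pattern are both right, but the formulas you typed have all three sign errors flipped; with those as written, the left-over terms would instead read $\partial_t g=\tfrac12\partial_y^2 g+2\phi'(t)\partial_y g+\phi'(t)^2 g-\phi''(t)y\,g$, which is wrong. Fix the signs and the argument is correct.
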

\begin{proof}
We can replace the Bessel process $B$ by the Brownian motion killed at zero $W^{\partial}$ in the expression of $g$ in \eqref{expression-g} for the same reasons we gave earlier. Now let $\varphi \in C_{c}^{\infty}((s,+\infty)\times(-\infty,0))$ be a test function. We apply Ito formula to the following semi-martingale 
\begin{align*}
Y(t)=\varphi(t,W(t))\text{exp}\left(\int_{s}^{t} \phi''(u)W(u)du\right)
\end{align*}
where $W$ is a Brownian motion started at $x$.
We get then
\begin{align*}
dY(t)=\partial_y \varphi(t,W(t))\text{exp}\left(\int_{s}^{t} \phi''(u)W(u)du\right)dW(t)+\\
\left(\partial_t \varphi(t,W(t))+\frac{1}{2} \partial^2_y \varphi(t,W(t))+\varphi(t,W(t))\phi''(t)W(t)\right)
\text{exp}\left(\int_{s}^{t} \phi''(u)W(u)du\right)dt
\end{align*}
We integrate between $s$ and $t \wedge \tau_0$ (where $\tau_0$ is the first hitting time of zero of $W$). As the first term is a bounded local martingale (and hence a true martingale), by taking the expectation we get that 
\begin{align*}
\mathbb{E}\left[\varphi(t\wedge \tau_0,W(t \wedge \tau_0))]=\mathbb{E}[\int_{s}^{t\wedge \tau_0} \left(\partial_t \varphi(u,W(u))+\frac{1}{2} \partial^2_y \varphi(u,W(u)) \right. \right.\\
\left. \left. +\varphi(u,W(u))\phi''(u)W(u)\right)\text{exp}\left(\int_{s}^{u} \phi''(\omega)W(\omega)d\omega\right)du \right]
\end{align*}
Therefore 
\begin{align*}
\mathbb{E}\left[\varphi(t\wedge \tau_0,W(t \wedge \tau_0))]=\mathbb{E}[ \int_{s}^{t}\ind_{\{\max\limits_{s \le z \le u} W(z) <0\}} \left(\partial_t \varphi(u,W(u))+\frac{1}{2} \partial^2_y \varphi(u,W(u)) \right. \right.\\
\left. \left. +\varphi(u,W(u))\phi''(u)W(u)\right)\text{exp}\left(\int_{s}^{u} \phi''(\omega)W(\omega)d\omega\right)du\right]\\
=\int_{s}^{t} \mathbb{E} \left[ \left(\partial_t \varphi(u,W^{\partial}(u))+\frac{1}{2} \partial^2_y \varphi(u,W^{\partial}(u)) \right. \right. \\
\left. \left. +\varphi(u,W^{\partial}(u))\phi''(u)W^{\partial}(u)\right)\text{exp}\left(\int_{s}^{u} \phi''(\omega)W^{\partial}(\omega)d\omega\right)\right]du
\end{align*}
By sending $t \rightarrow \infty$ and conditioning on the value of $W^{\partial}(u)$, we get 
\begin{align*}
\int_{s}^{\infty} \int_{-\infty}^{0} \left( \partial_t \varphi(u,y)+\frac{1}{2} \partial^2_y \varphi(u,y)+\phi''(u)y\varphi(u,y) \right) g(u,y) dydu=0
\end{align*}
Thus we get the PDE in the distribution sense, but also in the classical sense because $g$ is smooth on the interior of its domain. 
\end{proof}
We give now an explicit formula for the functional $\Phi$ that was introduced in the previous section.
\begin{prop}
The function $\Phi$ can be expressed as 
\begin{align*}
\Phi(s,x,t)=\frac{-x}{\sqrt{2\pi(t-s)^3}} e^{-\frac{x^2}{2(t-s)}} \text{exp} \left( \phi'(s)x-\frac{1}{2} \int_{s}^{t} (\phi'(u))^2 du \right) \times\\
\mathbb{E}^{(s,-x) \rightarrow (t,0)}\left[\text{exp} \left (-\int_{s}^{t} \phi''(u)B^{br}(u)du \right)\right]
\end{align*}
for $s < t$ and $x<0$. $B^{br}$ here is a three-dimensional Bessel bridge from $(s,-x)$ to $(t,0)$. 
\end{prop}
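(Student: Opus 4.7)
The plan is to mimic the Girsanov derivation used in Proposition \ref{expression-f}, now applied to the first-passage density rather than the sub-zero transition density. Let $W$ denote a standard Brownian motion starting at $x<0$ at time $s$ under the physical measure, and let $\tau_0^W$ be its first hitting time of zero.

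First I would invoke Girsanov: exactly as in the proof of Proposition \ref{expression-f}, for any bounded path functional $F$ on $[s,t]$,
\[
\mathbb{E}\bigl[F(S)\mid S(s)=x\bigr] = \mathbb{E}\bigl[Z(t)\,F(W)\mid W(s)=x\bigr],
\]
with $Z(t)=\exp\!\bigl(-\!\int_s^t \phi'(u)\,dW(u)-\tfrac12\!\int_s^t\phi'(u)^2\,du\bigr)$. Taking $F$ to be $\tfrac{1}{\varepsilon}\mathbf{1}_{\{\tau_0\in(t,t+\varepsilon)\}}$ and sending $\varepsilon\downarrow 0$ (legitimate because $\tau_0^W$ has a continuous density and $Z$ is a true martingale on $[s,t]$ thanks to the local boundedness of $\phi'$), one obtains
\[
\Phi(s,x,t) = \mathbb{E}\bigl[Z(t)\,\big|\,\tau_0^W=t,\,W(s)=x\bigr]\cdot \frac{-x}{\sqrt{2\pi(t-s)^3}}\,e^{-x^2/(2(t-s))},
\]
the last factor being the classical first-passage density of a Brownian motion started at $x<0$.

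Next I would apply Ito integration by parts to $\phi'(u)W(u)$: since $W(s)=x$ and $W(t)=0$ on the conditioning event,
\[
-\int_s^t \phi'(u)\,dW(u) = \phi'(s)\,x + \int_s^t \phi''(u)\,W(u)\,du,
\]
so that $Z(t)$ splits into the deterministic prefactor $\exp\bigl(\phi'(s)x-\tfrac12\!\int_s^t\phi'(u)^2\,du\bigr)$ and the path functional $\exp\!\bigl(\int_s^t\phi''(u)\,W(u)\,du\bigr)$, which now needs to be averaged against the conditional law of $W$.

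Finally I would identify the conditional law of $(W(u))_{s\le u\le t}$ under $\{W(s)=x,\ \tau_0^W=t\}$. By Williams' time-reversal theorem (see \cite{yorrevuz}[Chapters 6,11]), the process $-W$ started at $-x>0$ and conditioned to first hit $0$ at time $t$ is a three-dimensional Bessel bridge $B^{br}$ from $(s,-x)$ to $(t,0)$. Substituting $W=-B^{br}$ flips the sign inside the remaining integral and yields precisely the Bessel-bridge expectation appearing in the statement. The only genuine obstacle is justifying the disintegration step that produced the conditional expectation above; this is handled by the smoothness and Gaussian tail bounds on $f$ and its derivatives established earlier in Section \ref{sect4}, which allow one to pass from the Girsanov identity on $\{\tau_0^W>t\}$ to its differentiated form at $\{\tau_0^W=t\}$.
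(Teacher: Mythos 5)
Your proposal is correct, but it takes a genuinely different route from the paper's. The paper proves this formula by differentiating the transition density: it starts from Lemma~\ref{joint}, which gives $\Phi(s,x,t)=-\tfrac12\partial_y f(s,x,t,0)$, substitutes the product form $f=\exp(\ldots)\cdot G\cdot(\text{Bessel expectation})$ from Proposition~\ref{expression-f}, and then must dispose of the term $G(s,x,t,0)\cdot\partial_y(\text{Bessel expectation})$, since $G$ vanishes at $y=0$ while the derivative of the Bessel expectation is a priori unknown. This is done by a two-stage application of l'Hopital's rule, invoking the PDE~\eqref{pde-g} for $g$ to kill the leading term, and then appealing to the weak convergence of the conditioned Bessel process to the Bessel bridge. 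You instead hit the first-passage density head on: apply Girsanov to the functional $\mathbf{1}_{\{\tau_0\in(t,t+\varepsilon)\}}$, send $\varepsilon\downarrow 0$, integrate $\phi'(u)\,dW(u)$ by parts using $W(t)=0$ on the conditioning event, and invoke the Williams/Imhof identity that Brownian motion conditioned on its first passage time is a three-dimensional Bessel bridge. Both proofs ultimately lean on the same Bessel bridge fact, but yours avoids the delicate $0\cdot\infty$ l'Hopital computation and makes the probabilistic content clearer. The paper's argument, on the other hand, reuses the already-proved regularity of $f$ and $g$ from Section~\ref{sect4} and needs no new conditioning identity beyond the one it already used for Proposition~\ref{expression-f}, so it is more self-contained relative to the rest of the section.

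One technical slip you should fix: the functional $\tfrac{1}{\varepsilon}\mathbf{1}_{\{\tau_0\in(t,t+\varepsilon)\}}$ is not $\mathcal{F}_t$-measurable, so you cannot pair it with the density $Z(t)$ as written. Either condition on $\{\tau_0\in(t-\varepsilon,t)\}$, which is $\mathcal{F}_t$-measurable, or apply Girsanov on $[s,t+\varepsilon]$ with $Z(t+\varepsilon)$ and then observe that $Z(t+\varepsilon)\to Z(t)$ a.s.\ as $\varepsilon\downarrow 0$. The required uniform integrability to justify the limit follows exactly as you indicate from the Gaussian tail bounds and smoothness established in this section, so the argument goes through once the measurability of the indicator is sorted out.
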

\begin{proof}
From Lemma \ref{joint}, we have that 
\begin{align*}
\Phi(s,x,t)=-\frac{1}{2} \partial_y f(s,x,t,0)
\end{align*}
Since
\begin{align*}
f(s,x,t,y)=\text{exp} \left (-\phi'(t)y+\phi'(s)x-\frac{1}{2} \int_{s}^{t} (\phi'(u))^2 du \right )g(s,x,t,y)
\end{align*}
and
\begin{align*}
\begin{split}
\partial_y g(s,x,t,0)=\lim_{y \uparrow 0} \partial_y G(s,x,t,y) \mathbb{E}\left[\text{exp} \left (-\int_{s}^{t} \phi''(u)B(u)du \right)\vert B(s)=-x, B(t)=-y\right]+\\
\lim_{y \uparrow 0} G(s,x,t,y) \partial_y \mathbb{E}\left[\text{exp} \left (-\int_{s}^{t} \phi''(u)B(u)du \right)\vert B(s)=-x, B(t)=-y\right]
\end{split}
\end{align*}
it suffices to prove that 
\begin{align*}
\lim_{y \uparrow 0} \partial_y \mathbb{E}\left[\text{exp} \left (-\int_{s}^{t} \phi''(u)B(u)du \right)\vert B(s)=-x, B(t)=-y\right] < \infty
\end{align*}
as $G(s,x,t,0)=0$. We have by Hopital's rule applied twice
\begin{align*}
\lim_{y \uparrow 0} \partial_y \mathbb{E}\left[\text{exp} \left (-\int_{s}^{t} \phi''(u)B(u)du \right)\vert B(s)=-x, B(t)=-y\right]&=\lim_{y \uparrow 0} \frac{(\partial_y g)G-(\partial_y G)g}{G^2}\\
&=\lim_{y \uparrow 0} \frac{(\partial^2_{y}g)G-(\partial_y^2 G)g}{2 G \partial_y G}\\
&=\lim_{y \uparrow 0} \frac{\partial_y^2 g}{2\partial_y G} -\lim_{y \uparrow 0} \frac{(\partial_y ^2 G) g}{2G \partial_y G}\\
&=-\lim_{y \uparrow 0} \frac{(\partial_y^2G)g}{2G \partial_y G}\\
&=-\lim_{y \uparrow 0} \frac{(\partial_y ^3 G)g +(\partial_y ^2 G)\partial_y g}{2(\partial_y G)^2+2G \partial_y ^2 G}\\
\lim_{y \uparrow 0} \partial_y \mathbb{E}\left[\text{exp} \left (-\int_{s}^{t} \phi''(u)B(u)du \right)\vert B(s)=-x, B(t)=-y\right]&=0
\end{align*}
In the fourth line we used the fact that $\lim_{y \uparrow 0} \partial_y^2 g=0$. This follows from the PDE \eqref{pde-g} verified by $g$ and the fact that $g(s,x,t,0)=0$. Moreover because $\lim_{y \uparrow 0} \partial_y G \ne 0$, we can conclude that the limit is equal to zero in the penultimate equality.\\

To finish the proof, we refer to the fact that the weak limit of the law of the three-dimensional Bessel process conditioned to end at $y$ when $y$ goes to zero is that of the corresponding three-dimensional Bessel bridge, and thus the result follows from the expression of the Green function $G$. 
\end{proof}
We are ready to give an explicit formula of the kernel $K^{\phi}$.
\begin{prop}\label{kernel}
The kernel $K^{\phi}$ has the following expression 
\begin{align*}
K^{\phi}(y,z)=\frac{\phi'(z)-\phi'(y)}{\sqrt{2\pi (z-y)^3}} \text{exp} \left (-\frac{1}{2} \int_{y}^{z} (\phi'(u))^2 du \right ) 
\mathbb{E}\left[\text{exp} \left (-\int_{y}^{z} \phi''(u) \textbf{e}(u)du \right)\right]
\end{align*}
for $y\le z$, where $(\textbf{e}(u) ,   y \le u \le z)$ is a Brownian excursion on $[y,z]$. 
\end{prop}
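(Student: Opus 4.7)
The plan is to insert the explicit formula for $\Phi$ (and its analogue $\tilde{\Phi}$) into the definition
\[
K^{\phi}(y,z)=2\int_{y}^{z}\!\!\int_{-\infty}^{0}\phi''(\omega)\,\Phi(\omega,x,z)\,\tilde{\Phi}(-\omega,x,-y)\,dx\,d\omega,
\]
then to recognize the $x$-integral as an expectation against the marginal law of a Brownian excursion on $[y,z]$ at time $\omega$, so that the $\omega$-dependence collapses and the outer integral reduces to $\phi'(z)-\phi'(y)$.

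First I would substitute the formula for $\Phi(\omega,x,z)$ from the preceding proposition and obtain the symmetric expression for $\tilde{\Phi}(-\omega,x,-y)$ by applying the proposition to the tilted potential $\tilde{\phi}(u)=\phi(-u)$, together with the change of variables $v=-u$ inside the deterministic integrals and inside the Bessel bridge expectation. The key cancellation is that $\Phi$ contributes $\exp(\phi'(\omega)x)$ while $\tilde{\Phi}$ contributes $\exp(\tilde{\phi}'(-\omega)x)=\exp(-\phi'(\omega)x)$, so these drop out, and the two pieces $-\tfrac{1}{2}\int_{\omega}^{z}(\phi'(u))^{2}du$ and $-\tfrac{1}{2}\int_{y}^{\omega}(\phi'(u))^{2}du$ merge to $-\tfrac{1}{2}\int_{y}^{z}(\phi'(u))^{2}du$, independent of $\omega$.

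Next I would set $a=-x>0$ and observe that the remaining prefactor
\[
\frac{a^{2}}{2\pi\bigl((z-\omega)(\omega-y)\bigr)^{3/2}}\,\exp\!\left(-\frac{a^{2}(z-y)}{2(z-\omega)(\omega-y)}\right)
\]
is precisely $\frac{1}{2\sqrt{2\pi(z-y)^{3}}}$ times the marginal density $p_{\mathbf{e}}(\omega,a)$ at time $\omega$ of a Brownian excursion $\mathbf{e}$ on $[y,z]$. Combined with this, the remaining stochastic factor is a product of two independent Bessel-bridge expectations, one on $[y,\omega]$ from $0$ to $a$ and one on $[\omega,z]$ from $a$ to $0$. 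Invoking the classical identification of the three-dimensional Bessel bridge from $0$ to $0$ with a Brownian excursion, together with the Markov property at time $\omega$ (which says that conditionally on $\mathbf{e}(\omega)=a$ the excursion splits into two independent Bessel bridges), the $a$-integral recombines into the single expectation
\[
\mathbb{E}\!\left[\exp\!\left(-\int_{y}^{z}\phi''(u)\,\mathbf{e}(u)\,du\right)\right],
\]
which is independent of $\omega$.

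With the $x$-integral evaluated, the outer $\omega$-integral reduces to $2\cdot\frac{1}{2\sqrt{2\pi(z-y)^{3}}}\int_{y}^{z}\phi''(\omega)\,d\omega=\frac{\phi'(z)-\phi'(y)}{\sqrt{2\pi(z-y)^{3}}}$ (multiplied by the surviving factors), giving exactly the stated formula. The main technical obstacle is the recognition step: verifying carefully that after the algebraic cancellations the Gaussian prefactor in $a$ is proportional to the excursion marginal $p_{\mathbf{e}}(\omega,a)$, and that the two Bessel bridges agree in law with a Brownian excursion conditioned at time $\omega$ on $\mathbf{e}(\omega)=a$; Fubini between the $a$-integral and the Bessel-bridge expectations is straightforward to justify by the Gaussian decay and the nonnegativity of the integrand. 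Smoothness and integrability questions needed to permit these manipulations are exactly those already guaranteed by the regularity of $f$ and $g$ established earlier in this section.
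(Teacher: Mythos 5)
Your proof is correct and follows essentially the same route as the paper: substitute the formula for $\Phi$ (and for $\tilde{\Phi}$ via the potential $\phi(-\cdot)$), observe the cancellation of the $e^{\pm\phi'(\omega)x}$ factors, recognize the Gaussian prefactor in $a$ as $\frac{1}{2\sqrt{2\pi(z-y)^{3}}}$ times the excursion marginal density at time $\omega$, recombine the two Bessel-bridge expectations into a single conditional excursion expectation via the Markov property and time-reversal of Bessel bridges, and finally integrate $\phi''$ over $[y,z]$. You have correctly tracked the proportionality constant (the paper's intermediate display appears to drop a factor of $\tfrac12$, though the final formula is correct), and you correctly noted that the time-reversal step is needed to turn the reflected bridge on $[-\omega,-y]$ into a forward bridge on $[y,\omega]$.
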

\begin{proof}
Recall that $K^{\phi}$ is given by 
\begin{align*}
K^{\phi}(y,z)=2\int_{y}^{z} \int_{0}^{\infty} \phi''(\omega)\Phi(\omega,-x,z)\tilde{\Phi}(-\omega,-x,-y)dxd\omega
\end{align*}
Remember that $\tilde{\Phi}$ is the same as $\Phi$ with the function $\phi$ replaced by $\phi(-\cdot)$. Hence
\begin{align*}
\Phi(\omega,-x,z)\tilde{\Phi}(-\omega,-x,-y)=\frac{x^2}{2\pi\sqrt{(z-\omega)^3(\omega-y)^3}}e^{-\frac{x^2}{2(z-\omega)}-\frac{x^2}{2(\omega-y)}}\times \\
\text{exp} \left (-\frac{1}{2} \int_{\omega}^{z} (\phi'(u))^2du -\frac{1}{2} \int_{-\omega}^{-y} (\phi'(-u))^2 du \right)\times \\
\mathbb{E}^{(\omega,x) \rightarrow (z,0)}\left[\text{exp} \left (-\int_{\omega}^{z} \phi''(u)B^{br}(u)du \right)\right]\times \\
\mathbb{E}^{(-\omega,x) \rightarrow (-y,0)}\left[\text{exp} \left (-\int_{-\omega}^{-y} \phi''(-u)B^{br}(u)du \right)\right]
\end{align*}
Consider now a Brownian excursion $\textbf{e}$ on $[y,z]$, conditionally on its value at $\omega \in [y,z] $, the two paths $(\textbf{e}(u) , ~~y \le u \le  \omega)$ and $(\textbf{e}(u), ~~ \omega \le u \le z)$ are independent, and each path has the distribution of a three-dimensional Bessel bridge. Furthermore, because of the Brownian scaling we have that 
\begin{equation}\label{scaling}
(\textbf{e}(u) , y \le u \le z) \stackrel{d}{=} (\sqrt{y-z}\textbf{e}^{\text{std}}\left(\frac{u-y}{z-y}\right) , y \le u \le z)
\end{equation}
where $(\textbf{e}^{\text{std}}(u) , 0 \le u \le 1)$ is a standard Brownian excursion. Thus, using the fact that 
\begin{align*}
\mathbb{P}\left[\textbf{e}^{\text{std}}(t) \in dx\right]=\frac{2x^2}{\sqrt{2\pi t^3(1-t)^3}}e^{-\frac{x^2}{2t(1-t)}} dx
\end{align*}
then it follows that for $\omega \in [y,z]$ 
\begin{align*}
\mathbb{P}\left[\textbf{e}(\omega) \in dx\right]=\frac{2x^2 \sqrt{(z-y)^3}}{\sqrt{2\pi (z-\omega)^3(\omega-y)^3}}e^{-\frac{x^2}{2(z-\omega)}-\frac{x^2}{2(\omega-y)}} dx
\end{align*}
Thus by the time-reversal property of the three-dimensional Bessel bridges we have that 
\begin{align*}
\Phi(\omega,-x,z)\tilde{\Phi}(-\omega,-x,-y)=\frac{1}{\sqrt{2\pi(z-y)^3}}
 \mathbb{E}[\text{exp} \left(-\int_{y}^{z} \phi''(u)\textbf{e}(u)du \right)|\textbf{e}(\omega)=x]\times \\
\frac{\mathbb{P}[\textbf{e}(\omega)\in dx]}{dx}
\end{align*}
By integrating with respect to $x$ and $\omega$ we get the desired result.
\end{proof}
The next theorem gives a closed formula for the function $j$. 
\begin{thm}\label{jou}
Let $s \in \mathbb{R}$, define the function $l^s$ on $(0,\infty)$ by 
\begin{align*}
l^s(u)=\text{exp} \left (-\frac{1}{2} \int_{s}^{s+u} \phi'(z)^2 dz \right) \mathbb{E}\left[\text{exp} \left ( - \int_{s}^{s+u} \phi''(z)\textbf{e}(z)dz \right)\right],~~u>0
\end{align*}
where $\textbf{e}$ is a Brownian excursion on $[s,s+u]$. Then 
\begin{align*}
j(s)=-\phi'(s)+\int_{0}^{\infty} \frac{l^{s}(u)-1}{\sqrt{2\pi u^3}}du
\end{align*}
\end{thm}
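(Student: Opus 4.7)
The plan is to compute $J(s,x) = \mathbb{P}[\tau_0 = \infty \mid S(s) = x]$ explicitly for $x < 0$ by writing it as $1$ minus the total mass of the first-passage density, substituting the closed-form expression for $\Phi(s, x, s+u)$ from the previous proposition, and then extracting the left derivative at $x = 0$. Introducing the shorthand
\[
L(s, x, u) := e^{-\tfrac{1}{2}\int_s^{s+u} \phi'(z)^2 dz}\, \mathbb{E}^{(s, -x) \to (s+u, 0)}\!\Bigl[\exp\Bigl(-\!\!\int_s^{s+u}\!\! \phi''(z) B^{br}(z)\, dz\Bigr)\Bigr],
\]
the weak convergence, as $x \uparrow 0$, of the $3$-dimensional Bessel bridge from $-x$ to $0$ on $[s, s+u]$ towards the Brownian excursion on the same interval gives $L(s, 0, u) = l^s(u)$, and the proposition yields
\[
J(s, x) = 1 - e^{\phi'(s) x}\int_0^\infty \frac{-x}{\sqrt{2\pi u^3}}\, e^{-x^2/(2u)}\, L(s, x, u)\, du.
\]

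Next I would use the classical identity $\int_0^\infty \frac{-x}{\sqrt{2\pi u^3}} e^{-x^2/(2u)} du = 1$ for $x<0$ (the first-hitting-time density of $0$ by standard Brownian motion started at $x$ integrates to $1$ by recurrence). Writing $L = 1 + (L-1)$ and using this identity gives the decomposition
\[
J(s, x) = (1 - e^{\phi'(s) x}) - e^{\phi'(s) x}\int_0^\infty \frac{-x}{\sqrt{2\pi u^3}}\, e^{-x^2/(2u)}\, (L(s, x, u) - 1)\, du.
\]
Since $J(s,0) = 0$ and the smoothness results of the present section guarantee that $J(s,\cdot)$ is $C^1$ on $(-\infty, 0)$, the value $j(s) = \lim_{x\uparrow 0}\partial_x J(s,x)$ coincides with $\lim_{x\uparrow 0^-} J(s, x)/x$. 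Dividing the identity by $x$, the first summand produces $\lim_{x\to 0}(1 - e^{\phi'(s) x})/x = -\phi'(s)$, and the second summand, after absorbing the minus sign from $-x$, converges (provided one can interchange limit and integral) to $\int_0^\infty \frac{l^s(u) - 1}{\sqrt{2\pi u^3}}\, du$, exactly as claimed.

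The main obstacle is justifying this interchange, since the weight $u^{-3/2}$ is singular at both endpoints. Near $u = 0$, applying the elementary inequality $|1 - e^{-a}| \le a$ for $a \ge 0$ together with the standard scaling estimate that the expected area of a $3$-dimensional Bessel bridge from $-x$ to $0$ on $[s, s+u]$ is of order $u^{3/2} + |x|\,u$ yields $|L(s, x, u) - 1| = O(u)$ uniformly for $x$ in a neighbourhood of $0$, which is precisely integrable against $u^{-3/2}$. For large $u$, the superlinear growth of $\phi$ forces the deterministic prefactor $\exp(-\tfrac{1}{2}\int_s^{s+u}\phi'(z)^2 dz)$ to decay faster than any polynomial in $u$, while $|L - 1| \le 1$ always, producing a dominating function in that regime as well. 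With these two uniform bounds in place, dominated convergence closes the argument and produces the announced formula for $j(s)$.
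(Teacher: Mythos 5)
Your argument is correct, but it is a genuinely different and substantially lighter route than the paper's. You start from the elementary probabilistic identity $J(s,x)=1-\mathbb{P}[\tau_0<\infty\mid S(s)=x]=1-\int_s^\infty\Phi(s,x,t)\,dt$, substitute the Bessel-bridge representation of $\Phi$ already established in this section, and observe that the piece with $L\equiv 1$ integrates exactly to $1-e^{\phi'(s)x}$ by recurrence of Brownian motion, isolating $-\phi'(s)$ in the limit; the residual $(L-1)$ piece then hands you the stated integral once dominated convergence is justified, which you do with the correct small-$u$ bound $|L-1|=O(u)$ coming from $1-e^{-a}\le a$ and the $O(u^{3/2}+|x|\,u)$ estimate on the expected Bessel-bridge area (and an $|L-1|\le 1$ bound at infinity, against the integrable weight $u^{-3/2}$). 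The paper instead derives $J$ by writing it as a $t\to\infty$ limit of $\int_0^\infty e^{\phi'(t)y}m(s,x,t,y)\,dy$, taking a Laplace transform of $m$ in the spatial variable, solving the resulting transport PDE by characteristics, and then extracting $j(s)$ via a delicate $\epsilon\to 0$ cancellation between a boundary term and the tail integral; in fact after the dust settles the paper's characteristic solution is precisely your identity $J=1-\int\Phi$, so your approach short-circuits the PDE machinery by recognizing this at the outset. The trade-off: the paper's method is self-propagating (the same Laplace-transform/characteristics scheme is how they obtain $\Phi$ in the first place and how the parabolic case reduces to Airy), whereas your route leans on the already-proved explicit form of $\Phi$ and on standard first-passage probability, making the derivation shorter and arguably more transparent; the $\epsilon$-regularization the paper performs around $\{t=s\}$ reappears in your argument as the need to control the $u^{-3/2}$ singularity at $u=0$, which you handle cleanly. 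One minor point worth tightening: rather than appealing to $C^1$ smoothness of $J$ on $(-\infty,0)$ plus the mean value theorem, you can note directly that $j(s)$ as defined in the paper is $\lim_{x\uparrow 0}\partial_x J(s,x)$, and your explicit formula for $J$ is manifestly differentiable in $x$ away from $0$ with a derivative that has a limit as $x\uparrow 0$ equal to the claimed expression; the $J(s,x)/x$ detour is then unnecessary.
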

\begin{proof}
The function $J$ is defined as 
\begin{align*}
J(s,x)&=\mathbb{P}[S(\omega)<0 \text{ for all } \omega \ge s \vert S(s)=x]\\
&=\lim_{t \rightarrow \infty} \mathbb{P}[S(\omega) < 0 \text{ for all } s \le \omega \le t \vert S(s)=x]\\
&=\lim_{t \rightarrow \infty} \int_{-\infty}^{0} f(s,x,t,y)dy\\
&=e^{\phi'(s)x} \lim_{t \rightarrow \infty} e^{-\frac{1}{2} \int_{s}^t (\phi'(u))^2du} \int_{-\infty}^0 e^{-\phi'(t)y}g(s,x,t,y)dy\\
&=e^{\phi'(s)x} \lim_{t \rightarrow \infty} e^{-\frac{1}{2} \int_{s}^t (\phi'(u))^2du} \int_{0}^{\infty} e^{\phi'(t)y}m(s,x,t,y)dy
\end{align*}
where the function $m$ is defined as
\begin{align*}
m(s,x,t,y)=g(s,x,t,-y)
\end{align*}
It verifies the following PDE 
\begin{equation}\label{pde-m}
\partial_t m=\frac{1}{2} \partial^2_{yy}m-\phi''(t)ym
\end{equation}
Because of the asymptotic behavior of $g$ in space at infinity, we can define for every $\lambda \in \mathbb{R}$ the Laplace transform 
\begin{align*}
\hat{m}(s,x,t,\lambda)=\int_{0}^{\infty} e^{\lambda y}m(s,x,t,y)dy
\end{align*}
From the representation formula of the function $h$ (and thus that of $g$) in the statement of Theorem \ref{representation} and the fast decay of the Green functions $G$ and $\hat{G}$ in space, we can interchange the order of differentiation and integration for the Laplace transform $\hat{m}$, hence 
\begin{align*}
\partial_t \hat{m}&=\int_{0}^{\infty} e^{\lambda y} \partial_t m(s,x,t,y)dy\\
&=\int_{0}^{\infty} e^{\lambda y} \left(\frac{1}{2} \partial^2_{yy} m(s,x,t,y) - \phi''(t) y m(s,x,t,y)\right)dy\\
&=\frac{1}{2}\left[e^{\lambda y} \partial_y m(s,x,t,y)\right]_{0}^{\infty}+\frac{1}{2} \lambda^2 \hat{m}(s,x,t,\lambda)-\phi''(t)\partial_{\lambda} \hat{m}(s,x,t,\lambda)\\
&=\frac{1}{2} \lambda^2 \hat{m}(s,x,t,\lambda) -\phi''(t)\partial_{\lambda} \hat{m}(s,x,t,\lambda)-\frac{1}{2} \partial_y m(s,x,t,0)
\end{align*}
by integration by parts and using the fact that $m(s,x,t,0)=0$. From the expression of $g$ we deduce that
\begin{align*}
\partial_y m(s,x,t,0)=-\partial_y g(s,x,t,0)=\frac{-2x}{\sqrt{2\pi (t-s)^3}}e^{-\frac{x^2}{2(t-s)}}\times \\
\mathbb{E}\left[\text{exp} \left (-\int_{s}^{t} \phi''(u)B(u)du \right ) \vert B(s)=-x,B(t)=0 \right]\\
=2\Phi(s,x,t)\text{exp} \left (-\phi'(s)x + \frac{1}{2} \int_{s}^{t} (\phi'(u))^2 du \right )=:-2\Upsilon(t)
\end{align*}
Since $x$ and $s$ are fixed for now, we will often omit them when writing out expressions where they do not vary. Thus, the PDE verified by $\hat{m}$ takes the form
\begin{align*}
\partial_t \hat{m} +\phi''(t)\partial_{\lambda} \hat{m} -\frac{1}{2} \lambda^2 \hat{m} - \Upsilon(t)=0
\end{align*}
This is a first order non-linear PDE that can be solved by the method of characteristics. If we denote the variables by $x_1:=t$ and $x_2:=\lambda$ and the value of the function $z=\hat{m}(x_1,x_2)$, the characteristic ODEs take the form
\begin{center}
$
\left\{
    \begin{array}{ll}
        \dot{x_1}(u)=1 \\
        \dot{x_2}(u)=\phi''(x_1(u)) \\
        \dot{z}(u)=\frac{1}{2}x_2^2(u)z(u)+\Upsilon(x_1(u))
    \end{array}
\right.
$
\end{center}
We choose the initial conditions such that $x_1(u)=u$ and $x_2(u)=\phi'(u)+(\lambda-\phi'(t))$ for $u \ge s$. Hence
\begin{equation*}
\dot{z}(u)=\frac{1}{2}(\phi'(u)+\lambda-\phi'(t))^2z(u)+\Upsilon(u)
\end{equation*}
Introduce the function $v^{\lambda}$ defined by
\begin{equation*}
v^{\lambda}(u)=\text{exp}\left (-\frac{1}{2} \int_{s}^{u} (\phi'(z)+\lambda-\phi'(t))^2dz\right)
\end{equation*}
Then it is clear that
\begin{equation*}
(\dot{v^{\lambda}z})(u)=v^{\lambda}(u)\Upsilon(u)
\end{equation*}
In order to avoid the singularity at $\{t=s\}$, we integrate thus between $s+\epsilon$ and $t$ for $\epsilon>0$ to get that 
\begin{equation*}
v^{\lambda}(t)z(t)-v^{\lambda}(s+\epsilon)z(s+\epsilon)=\int_{s+\epsilon}^{t} v^{\lambda}(u)\Upsilon(u)du
\end{equation*}
which is equivalent to
\begin{align*}
\hat{m}(s,x,t,\lambda)v^{\lambda}(t)-\hat{m}(s,x,s+\epsilon,\phi'(s+\epsilon)+\lambda-\phi'(t))v^{\lambda}(s+\epsilon)=
\int_{s+\epsilon}^{t} v^{\lambda}(u)\Upsilon(u)du
\end{align*}
By taking $\lambda=\phi'(t)$, we get
\begin{equation}\label{charac-express}
\begin{split}
\hat{m}(s,x,t,\phi'(t))e^{-\frac{1}{2}\int_{s}^{t} \phi'(u)^2 du}-\hat{m}(s,x,s+\epsilon,\phi'(s+\epsilon))e^{-\frac{1}{2}\int_{s}^{s+\epsilon} \phi'(u)^2 du}\\=
\int_{s+\epsilon}^{t} e^{-\frac{1}{2}\int_{s}^{u} \phi'(\omega)^2 d\omega}\Upsilon(u)du
\end{split}
\end{equation}
As $J(s,x)=e^{\phi'(s)x} \lim\limits_{t\rightarrow \infty} e^{-\frac{1}{2} \int_{s}^{t} (\phi'(u))^2du} \hat{m}(s,x,t,\phi'(t))$. By sending $t$ to $\infty$ in the expression \eqref{charac-express}, we have
\begin{align*}
J(s,x)=e^{\phi'(s) x} \left [\hat{m}(s,x,s+\epsilon,\phi'(s+\epsilon))e^{-\frac{1}{2}\int_{s}^{s+\epsilon} \phi'(u)^2 du} \right.+\\
\left. \int_{s+\epsilon}^{\infty} e^{-\frac{1}{2}\int_{s}^{u} \phi'(\omega)^2 d\omega}\Upsilon(s,x,u)du \right]
\end{align*}
It follows that
\begin{align}\label{terms}
\begin{split}
j(s):=\lim_{x \uparrow 0} \frac{\partial}{\partial x} J(s,x)=e^{-\frac{1}{2}\int_{s}^{s+\epsilon} \phi'(u)^2 du} \lim_{x \uparrow 0} \frac{\partial}{\partial x}\hat{m}(s,x,s+\epsilon,\phi'(s+\epsilon))+\\
\int_{s+\epsilon}^{\infty} e^{-\frac{1}{2}\int_{s}^{u} \phi'(\omega)^2 d\omega} \lim_{x\uparrow 0}\frac{\partial}{\partial x} \Upsilon(s,x,u)du
\end{split}
\end{align}
since $m(s,0,s+\epsilon,\cdot)=0$, and we can interchange differentiation and the integral sign in the second term because we are away from the singularity line $\{t=s\}$. Now, we have that
\begin{align*}
\hat{m}(s,x,s+\epsilon,\phi'(s+\epsilon))=\int_{0}^{\infty} e^{\phi'(s+\epsilon)y}m(s,x,s+\epsilon,y)dy
\end{align*}
It is clear that $m$ is smooth in the parameters $(s,x)$ as well. Our analysis of regularity of the function $f(s,x,t,y)$ consisted of using the Kolmogorov forward equation where the parameters were $t$ and $y$, but similarly the Kolmogorov backward equation that holds for the parameters $s$ and $x$, we see that the solution enjoys the same smoothness and integrability properties away from the line $\{s=t\}$ (it is formally just the adjoint problem). Hence we can differentiate inside the integral sign to get
\begin{align*}
\lim_{x \uparrow 0} \frac{\partial}{\partial x} \hat{m}(s,x,s+\epsilon,\phi'(s+\epsilon))&=\int_{0}^{\infty} e^{\phi'(s+\epsilon)y} \lim_{x \uparrow 0} \frac{\partial}{\partial x} m(s,x,s+\epsilon,y)dy
\end{align*}
since we have that
\begin{align*}
\lim_{x \uparrow 0} \frac{\partial}{\partial x} m(s,x,s+\epsilon,y)=-\frac{2y}{\sqrt{2\pi \epsilon^3}} e^{-\frac{y^2}{2\epsilon}}\times \\ \mathbb{E}\left[\text{exp} \left (-\int_{s}^{s+\epsilon} \phi''(u)B(u)du \right ) \vert B(s)=0,B(s+\epsilon)=y\right]
\end{align*}
Thus
\begin{align*}
\lim_{x \uparrow 0} \frac{\partial}{\partial x} \hat{m}(s,x,s+\epsilon,\phi'(s+\epsilon))=-\int_{0}^{\infty} e^{\phi'(s+\epsilon)y-\frac{y^2}{2\epsilon}}\frac{2y}{\sqrt{2\pi \epsilon^3}}\times \\
\mathbb{E}\left[\text{exp} \left (-\int_{s}^{s+\epsilon} \phi''(u)B(u)du \right ) \vert B(s)=0,B(s+\epsilon)=y\right]dy
\end{align*}
However, the density of a three-dimensional Bessel process is given by 
\begin{equation}\label{density}
\mathbb{P}[B(s+\epsilon) \in dy \vert B(s)=0]=\frac{2y^2}{\sqrt{2\pi \epsilon^3}} e^{-\frac{y^2}{2\epsilon}} dy
\end{equation}
Hence
\begin{align*}
\lim_{x \uparrow 0} \frac{\partial}{\partial x} \hat{m}(s,x,s+\epsilon,\phi'(s+\epsilon))=\\
-\mathbb{E}\left[\frac{1}{B(s+\epsilon)}\text{exp} \left(\phi'(s+\epsilon)B(s+\epsilon)-\int_{s}^{s+\epsilon} \phi''(u)B(u)du \right) \vert B(s)=0\right]\\
=-\mathbb{E}\left[\frac{1}{B(\epsilon)}\text{exp} \left(\phi'(s+\epsilon)B(\epsilon)-\int_{0}^{\epsilon} \phi''(u+s)B(u)du \right) \vert B(0)=0\right]
\end{align*}
However by Brownian scaling, we know that 
\begin{align*}
(B(u) , u \ge 0) \overset{\mathrm{d}}{=} (\sqrt{\epsilon} B\left(\frac{u}{\epsilon}\right) , u \ge 0)
\end{align*}
Hence
\begin{align*}
\lim_{x \uparrow 0} \frac{\partial}{\partial x} \hat{m}(s,x,s+\epsilon,\phi'(s+\epsilon))&=
-\mathbb{E}\left[\frac{1}{\sqrt{\epsilon}B(1)}\text{exp} \left(\phi'(s+\epsilon)\sqrt{\epsilon}B(1)-\right. \right.\\
\left. \left. \sqrt{\epsilon^3}\int_{0}^{1} \phi''(\epsilon u+s)B(u)du \right) \vert B(0)=0\right]\\
&=-\frac{1}{\sqrt{\epsilon}}\mathbb{E}\left[\frac{1}{B(1)}\right]+\phi'(s)+O(\sqrt{\epsilon})
\end{align*}
It follows then that
\begin{equation}\label{express1}
\lim_{x \uparrow 0} \frac{\partial}{\partial x} \hat{m}(s,x,s+\epsilon,\phi'(s+\epsilon))=-\frac{2}{\sqrt{2\pi \epsilon}}+\phi'(s)+O(\sqrt{\epsilon})
\end{equation}
for $\epsilon$ small. The expectation of the inverse of $B(1)$ is computed using the density given in \eqref{density}. Now, on the other hand for the second term in \eqref{terms}, we have 
\begin{align*}
\lim_{x\uparrow 0} \frac{\partial}{\partial x} \Upsilon(s,x,u)&=-\partial_x \Phi(s,0,u) \text{exp} \left(\frac{1}{2} \int_{s}^{u}\phi'(\omega)^2 d\omega\right)\\
&=\frac{1}{\sqrt{2\pi(u-s)^3}} \mathbb{E}\left[\text{exp} \left (-\int_{s}^{u} \phi''(z)\textbf{e}(z)dz \right)\right] \\
\end{align*}
Hence
\begin{equation}\label{express2}
\int_{s+\epsilon}^{\infty} e^{-\frac{1}{2}\int_{s}^{u} \phi'(\omega)^2 d\omega} \lim_{x\uparrow 0}\frac{\partial}{\partial x} \Upsilon(s,x,u)du=\int_{\epsilon}^{\infty} \frac{l^s(u)}{\sqrt{2\pi u^3}}du
\end{equation}
and thus, from combining \eqref{terms}, \eqref{express1} and \eqref{express2} we get
\begin{align*}
j(s)=\int_{\epsilon}^{\infty} \frac{l^s(u)}{\sqrt{2\pi u^3}}du-\frac{2}{\sqrt{2\pi \epsilon}}-\phi'(s)+O(\sqrt{\epsilon})
\end{align*}
Finally, see that 
\begin{align*}
\int_{\epsilon}^{\infty} \frac{du}{\sqrt{2\pi u^3}} =\frac{2}{\sqrt{2\pi \epsilon}}
\end{align*}
and then send $\epsilon$ to zero to finish the proof.
\end{proof}
\begin{remark}
When $\phi$ is parabolic ($\phi(y)=y^2$), the term $\phi''$ in the PDE \eqref{pde-m} of $m$ becomes a constant and thus it takes the simple form
\begin{align*}
\partial_t m=\frac{1}{2} \partial^2_{yy} m -2ym
\end{align*}
By taking the Fourier transform in \textit{time} we get 
\begin{align*}
\frac{1}{2} (\hat{m}(\tau,y))''=(i\tau+2y) \hat{m}(\tau,y)
\end{align*}
This is a Sturm-Liouville equation. Its solution can be expressed in terms of Airy functions, from which follows all the analytical descriptions that Groeneboom found in \cite{groeneboom89}. It is clear that when $\phi''$ is not constant, this method fails which makes the study more delicate as one doesn't have any asymptotic or regularity properties of the function $m$, which was a crucial part in the analysis of Groeneboom. For those reasons, we had to take advantage of the \textit{space} Laplace transform. 
\end{remark}
As a consequence of the explicit formula of $j$ and $\Phi$, we are able to provide the joint distribution of the maximum of the process $(W(\omega)-\phi(\omega))_{\omega \ge s}$ and its location. This is given by the expression of $\Phi$ and $j$ and using Lemma \ref{joint}. However, the formula is involving many terms, in particular the Bessel bridge area. On the other hand, the density of the location of the maximum takes a simpler formula. This is a generalization of Chernoff distribution, where the parabolic drift is replaced by any strictly convex drift $\phi$.
 
\begin{thm}
Let $\omega_M$ be the location of the unique maximum of the process $(S(\omega)=W(\omega)-\phi(\omega))_{\omega \in \mathbb{R}}$, its density is equal to 
\begin{align*}
\frac{\mathbb{P}[\omega_M \in dt]}{dt}=\frac{1}{2}j(t)\tilde{j}(-t) 
\end{align*}
where $\tilde{j}$ is the analogue of $j$ for the process $\tilde{S}(\omega):=W(\omega)-\phi(-\omega)$.
\end{thm}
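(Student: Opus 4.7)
The plan is to condition on $S$ at a reference time $a$ strictly less than $t$, use the Markov property to split the problem into independent forward and backward pieces, and then pass to the limit $a\to t^{-}$ where short-time asymptotics make the calculation explicit. Writing $\omega_M^{+,a},M^{+,a}$ for the argmax and max of $(S(\omega))_{\omega\ge a}$ and $M^{-,a}$ for the max of $(S(\omega))_{\omega\le a}$, one has $\{\omega_M\in dt\}=\{\omega_M^{+,a}\in dt,\,M^{+,a}>M^{-,a}\}$ up to a null set. Conditional on $S(a)=x$, Lemma~\ref{joint} applied to the shifted process $S-y$ gives the forward joint density $-j(t)\Phi(a,x-y,t)$, while the reversibility identity $(S(-\cdot))\eqdist \tilde S(\cdot)$ combined with the same shift yields $\mathbb{P}[M^{-,a}<y\mid S(a)=x]=\tilde J(-a,x-y)$. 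Integrating against the Gaussian marginal $p_a(x)\,dx=\mathbb{P}[S(a)\in dx]$, changing variable $u=y-x>0$, and using $\int p_a = 1$ to collapse the $y$-integral yields
\begin{align*}
\frac{\mathbb{P}[\omega_M\in dt]}{dt}=-j(t)\int_0^{\infty}\Phi(a,-u,t)\,\tilde J(-a,-u)\,du,
\end{align*}
an identity whose right-hand side is necessarily independent of $a$.

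I would then evaluate this in the limit $a\to t^{-}$. From the explicit expression for $\Phi$ obtained earlier in this section, $\Phi(a,-u,t)=\frac{u}{\sqrt{2\pi(t-a)^3}}e^{-u^2/(2(t-a))}\bigl(1+o(1)\bigr)$ uniformly on compact subsets of $u>0$ as $t-a\to 0^{+}$, while the near-boundary Taylor expansion $\tilde J(-a,-u)=-u\,\tilde j(-a)+O(u^2)$ follows from the definition of $\tilde j$ together with $\tilde J(-a,0)=0$. Rescaling $u=v\sqrt{t-a}$ concentrates the integrand near $u=0$, and the Gaussian identity $\int_0^{\infty}v^2 e^{-v^2/2}\,dv=\sqrt{2\pi}/2$ gives
\begin{align*}
\int_0^{\infty}\Phi(a,-u,t)\,\tilde J(-a,-u)\,du\;\longrightarrow\;-\frac{\tilde j(-t)}{2}.
\end{align*}
Multiplying by $-j(t)$ produces the claimed identity $\frac{\mathbb{P}[\omega_M\in dt]}{dt}=\tfrac12 j(t)\tilde j(-t)$.

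The main technical obstacle is the dominated-convergence argument underlying the $a\to t^{-}$ step: one needs a uniform-in-$a$ integrable majorant for $\Phi(a,-u,t)\,\tilde J(-a,-u)$, which amounts to controlling $\tilde J(-a,-u)$ for $u$ bounded away from $0$ together with the Gaussian-type tails of $\Phi$ and of the correction factor coming from the Bessel-bridge area. Both ingredients are accessible from the smoothness and integrability of $\Phi$, $f$ and $\tilde J$ away from $\{t=s\}$ established earlier in Section~\ref{sect4}, together with the explicit formula of Proposition~\ref{kernel}-style for $\Phi$.
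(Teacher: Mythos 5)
Your decomposition at an arbitrary time $a<t$ is sound, your identity
\[
\frac{\mathbb{P}[\omega_M\in dt]}{dt}=-j(t)\int_0^\infty\Phi(a,-u,t)\,\tilde J(-a,-u)\,du
\]
is correct, and the parabolic rescaling $u=v\sqrt{t-a}$ followed by the Gaussian moment $\int_0^\infty v^2e^{-v^2/2}dv=\sqrt{2\pi}/2$ does produce $\tfrac12 j(t)\tilde j(-t)$. This is, however, a genuinely different route from the one in the paper. The paper conditions at $\omega=0$ and recognizes the resulting integral $\int_{-\infty}^0 \partial_y f(0,z,t,0)\,\tilde J(0,z)\,dz$ as $\tilde j(-t)$ times the total mass of the entrance law of $\tilde S^\downarrow$ (namely $1$), so the evaluation is exact and algebraic, with no limit to pass. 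Your approach instead exploits the $a$-independence of the integral and computes it as a short-time limit $a\uparrow t$, which requires the explicit formula for $\Phi$ from Proposition~\ref{kernel}-adjacent material and a dominated-convergence step; what you gain in transparency (Gaussian scaling at a parabolic singularity) you pay for in needing uniform majorants. Both are valid, and yours has the mild aesthetic advantage of treating $t>0$ and $t<0$ symmetrically without a separate remark.

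One point to tighten: your stated asymptotic
\[
\Phi(a,-u,t)=\frac{u}{\sqrt{2\pi(t-a)^3}}\,e^{-u^2/(2(t-a))}\bigl(1+o(1)\bigr)\quad\text{uniformly on compact subsets of }u>0
\]
is not correct as written. From the explicit formula, the multiplicative correction is
\[
\exp\Bigl(-\phi'(a)u-\tfrac12\int_a^t\phi'(w)^2\,dw\Bigr)\,
\mathbb{E}^{(a,u)\to(t,0)}\Bigl[\exp\bigl(-\int_a^t\phi''(w)B^{br}(w)\,dw\bigr)\Bigr],
\]
and for $u$ fixed and bounded away from $0$ this tends to $e^{-\phi'(t)u}\neq 1$ as $a\uparrow t$. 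The asymptotic you actually need is the one after the rescaling $u=v\sqrt{t-a}$, where $u\to 0$ and the correction factor \emph{does} tend to $1$ uniformly on compact $v$-intervals. So the correct statement is convergence of the rescaled integrand
$\sqrt{t-a}\,\Phi(a,-v\sqrt{t-a},t)\,\tilde J(-a,-v\sqrt{t-a})$
to $-\tilde j(-t)\,\frac{v^2}{\sqrt{2\pi}}e^{-v^2/2}$ pointwise in $v$, which together with a Gaussian-type majorant (furnished by the positivity and monotonicity of the Bessel-bridge functional and the boundedness of $\tilde J$) justifies dominated convergence. With that correction your argument goes through.
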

\begin{proof}
We will prove the equality for $t \ge 0$, the case $t \le 0$ is completely identical. From Lemma \ref{joint} with $s=0$ and any $x>z$ 
\begin{align*}
\frac{\mathbb{P}[\text{argmax}_{\omega \ge 0} S(\omega) \in dt, \max\limits_{\omega \ge 0} S(\omega) \in dz \vert S(0)=x]}{dtdz}=\frac{1}{2}j(t)\partial_y f(0,x-z,t,0)
\end{align*}
Hence
\begin{align*}
\mathbb{P}[\omega_M \in dt \vert S(0)=x]=\int_{x}^{+\infty} \mathbb{P}[\text{argmax}_{\omega \ge 0} S(\omega) \in dt, \max\limits_{\omega \ge 0} S(\omega) \in dz,\\
 \max\limits_{ \omega \le 0} S(\omega) < z \vert S(0)=x]\\
=\int_{x}^{+\infty}\frac{1}{2}j(t)\partial_y f(0,x-z,t,0)\mathbb{P}[S(\omega)<z \text{ for all } \omega \le 0 \vert S(0)=x]dzdt
\end{align*}
by independence of the paths $(S(\omega), \omega \le 0)$ and $(S(\omega), \omega \ge 0)$. However by time reversal of the Brownian motion we have
\begin{align*}
\mathbb{P}[S(\omega)<z \text{ for all } \omega \le 0 \vert S(0)=x]&=\mathbb{P}[\tilde{S}(\omega)<z \text{ for all } \omega \ge 0 \vert \tilde{S}(0)=x]\\
&=\mathbb{P}[\tilde{S}(\omega)<0 \text{ for all } \omega \ge 0 \vert \tilde{S}(0)=x-z]\\
&=\tilde{J}(0,x-z)
\end{align*}
Thus 
\begin{align*}
\frac{\mathbb{P}[\omega_M \in dt \vert S(0)=x]}{dt}=\int_{-\infty}^{0} \frac{1}{2}j(t)\partial_y f(0,z,t,0)\tilde{J}(0,z)dz
\end{align*}
Notice that the right hand-side is independent of $x$, so we can drop the conditional probability in the left hand-side. Moreover by \eqref{backward}, we have 
\begin{equation}\label{backward-1}
\partial_y f(0,z,t,0)=\partial_x \tilde{f}(-t,0,0,z)
\end{equation}
Using the expression of the entrance law of the process $\tilde{S}^{\downarrow}$ in \eqref{entrance-1}, we have 
\begin{equation}\label{entrance-2}
\mathbb{P}[\tilde{S}^{\downarrow}(0) \in dz \vert \tilde{S}^{\downarrow}(-t)=0]=\frac{\tilde{J}(0,z)}{\tilde{j}(-t)}\partial_x \tilde{f}(-t,0,0,z)dz
\end{equation}
Hence combining  \eqref{backward-1} and \eqref{entrance-2} we get 
\begin{align*}
\int_{0}^{\infty} \partial_y f(0,z,t,0)\tilde{J}(0,z)dz=\tilde{j}(-t)\int_{-\infty}^{0} \mathbb{P}[\tilde{S}^{\downarrow}(0) \in dz \vert \tilde{S}^{\downarrow}(-t)=0]=\tilde{j}(-t)
\end{align*}
which completes the proof.

\end{proof}
\begin{remark}
This last theorem is exactly Theorem \ref{chernoff} by noticing that $f^{\phi}(t)=-j(t)$ and $f^{\phi(-\cdot)}(-t)=-\tilde{j}(-t)$.
\end{remark}
\begin{remark}
From \cite{groeneboom89} results in the parabolic drift case, the Chernoff distribution can be expressed as 
\begin{align*}
\frac{\mathbb{P}[\text{argmax}_{z \in \mathbb{R}}(W(z)-z^2) \in dt]}{dt}=\frac{1}{2} k(t)k(-t)
\end{align*}
where $k(t)=e^{\frac{2}{3}t^3}g(t)$ and $g$ has the Fourier transform given by 
\begin{align*}
\hat{g}(\tau):=\int_{-\infty}^{\infty} e^{it\tau}g(t)dt=\frac{2^{\frac{1}{3}}}{\mathrm{Ai}(i2^{-\frac{1}{3}}\tau)}
\end{align*}
This expression is not clear from the formula we provided in Theorem \ref{chernoff}. We will prove thus in the following proposition that those two indeed coincide.
\end{remark}
\begin{prop}
For any $t \in \mathbb{R}$ we have 
\begin{align*}
2t+\int_{0}^{\infty} \frac{1}{\sqrt{2\pi u^3}}\left(1-e^{-\frac{2}{3}((u+t)^3-t^3)}\mathbb{E}\left[\text{exp} \left(-2\int_{0}^{u} \textbf{e}(z)dz \right)\right]\right)du=\\
\frac{e^{\frac{2}{3}t^3}}{2\pi} \int_{-\infty}^{\infty} e^{-itv} \hat{g}(v)dv
\end{align*}
\end{prop}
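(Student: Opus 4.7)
The strategy is to observe that both sides represent the same quantity, namely $-j(t)$, in the case $\phi(y) = y^2$. On the left-hand side this is immediate from Theorem \ref{jou}: substituting $\phi'(z) = 2z$ and $\phi''(z) = 2$ gives $\frac{1}{2}\int_{t}^{t+u}\phi'(z)^2\,dz = \frac{2}{3}((u+t)^3 - t^3)$, while the Brownian-excursion scaling \eqref{scaling} together with translation invariance in law yields $\int_{t}^{t+u}\phi''(z)\textbf{e}(z)\,dz \ed 2\int_{0}^{u}\textbf{e}(z)\,dz$ with the right-hand excursion of length $u$ based at $0$. Hence the LHS equals $-j(t)$ by definition. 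The right-hand side is by construction $k(t) = e^{\frac{2}{3}t^3} g(t)$, obtained from the Fourier inversion of the prescribed $\hat g$ in \cite{groeneboom89}.

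The heart of the proof is thus to establish the pointwise equality $-j(t) = k(t)$. The cleanest route exploits the simplification noted in the remark following Theorem \ref{jou}: when $\phi(y) = y^2$, the PDE for $m$ reads $\partial_t m = \frac{1}{2}\partial_{yy}^2 m - 2ym$, and a partial Fourier transform in $t$, say $\widetilde m(\tau, y) = \int e^{i\tau t} m(\cdot, t, y)\,dt$, satisfies the Sturm--Liouville equation $\frac{1}{2}\partial_{yy}^2 \widetilde m = (i\tau + 2y)\widetilde m$. Under the affine substitution $\zeta = 2^{1/3}(y + i\tau/2)$ this becomes Airy's equation $\partial_\zeta^2 \widetilde m = \zeta \widetilde m$, whose unique decaying branch as $y \to +\infty$ is proportional to $\mathrm{Ai}(\zeta)$. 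Propagating this explicit solution through the formula
\begin{align*}
J(s,x) = e^{2sx}\int_{s}^{\infty} \Upsilon(u)\,e^{-\frac{2}{3}(u^3-s^3)}\,du
\end{align*}
derived by the characteristic method in the proof of Theorem \ref{jou}, then differentiating in $x$ and letting $x \uparrow 0$, reproduces precisely $-j(t) = e^{\frac{2}{3}t^3} g(t)$ with $\hat g(v) = 2^{1/3}/\mathrm{Ai}(i 2^{-1/3}v)$, matching Groeneboom's representation. Normalizing constants (in particular the $2^{1/3}$ factor) are fixed by comparing behavior along a single characteristic, e.g.\ $\lambda = 2t$.

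The principal obstacle is a careful justification of the Fourier-transform reduction: $\widetilde m$ must be interpreted as a tempered distribution (since $m$ is only defined for $t > s$), the boundary condition $m(s,x,t,0) = 0$ must be encoded via an inhomogeneous source term consistent with the function $\Upsilon$ appearing in the characteristic analysis, and the selection of $\mathrm{Ai}$ over $\mathrm{Bi}$ must be made rigorous through growth estimates in $y$. As a consistency check one may use the product identity $(-j(t))(-j(-t)) = k(t) k(-t)$, which follows at once from Theorem \ref{chernoff} (with $\tilde j = j$ because $\phi(-z) = \phi(z)$) and Groeneboom's Theorem \ref{groeneboom} both evaluating the Chernoff density; however this alone is insufficient to force pointwise equality, so the Fourier/Airy analysis above is genuinely needed.
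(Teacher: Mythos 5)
Your reduction of the claim to the pointwise identity $-j(t) = k(t)$ in the parabolic case $\phi(y)=y^2$ is correct: the left side is precisely Theorem \ref{jou} specialized to $\phi'(z)=2z$, $\phi''\equiv 2$, and the right side is by Fourier inversion $e^{\frac{2}{3}t^3}g(t)=k(t)$. However, the route you propose to prove $-j(t)=k(t)$ is not the one the paper uses, and as written it has a concrete error and leaves the genuinely hard analytic step unaddressed.

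The paper does \emph{not} re-derive the Airy representation via the Sturm--Liouville reduction you sketch. It instead imports the explicit identity (1.6) from Chernoff's paper as a black box, differentiates both sides in $x$, and sends $x\uparrow 0$, using the Airy Wronskian, the Laplace-transform formula for the Bessel area (Janson's survey, eq.\ 384), Brownian scaling, and a careful $(0,\epsilon)\cup(\epsilon,\infty)$ splitting of the integral. Your plan, by contrast, would reprove the Airy representation from scratch and then ``propagate'' it back through the characteristics formula for $J$. That is essentially Groeneboom's original program, so the idea is not unreasonable, but it amounts to a different and substantially longer argument than the paper's.

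The concrete gaps: first, the formula you write, $J(s,x)=e^{2sx}\int_s^{\infty}\Upsilon(u)e^{-\frac{2}{3}(u^3-s^3)}\,du$, omits the boundary term coming from the characteristics analysis. Equation \eqref{charac-express} in the paper gives $J(s,x)$ as the sum of $e^{\phi'(s)x}\hat m(s,x,s+\epsilon,\phi'(s+\epsilon))e^{-\frac{1}{2}\int_s^{s+\epsilon}\phi'(u)^2du}$ \emph{plus} the integral, and the first term tends to $1-\mathbb{P}[\tau_0<\infty\mid S(s)=x]$-type contribution rather than $0$; dropping it gives a wrong $J$. Second, and more seriously, even after differentiating in $x$ one cannot simply let $\epsilon\downarrow 0$: both the boundary term (which carries a $-2/\sqrt{2\pi\epsilon}$ contribution in its $x$-derivative) and the integral of $\partial_x\Upsilon$ over $(s,s+\epsilon)$ diverge individually, and their cancellation is exactly what the proof of Theorem \ref{jou} is devoted to establishing. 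Your sketch does not confront this singularity, nor does it fix the Airy branch normalization rigorously (``comparing behavior along a single characteristic'' is not a proof). So the proposal, while in a valid spirit, does not yet constitute a proof of the proposition.
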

\begin{proof}
From equation (1.6) in \cite{chernoff} \footnote{There is a typo in the published paper, the term $4^{\frac{2}{3}}$ in the denominator should be there instead of $4^{\frac{1}{3}}$.} , we have that 
\begin{align}\label{check1}
\begin{split}
\frac{1}{2\pi} \int_{v=-\infty}^{\infty}\frac{\mathrm{Ai}(i\xi-4^{\frac{1}{3}}x)}{\mathrm{Ai}(i\xi)} \int_{u=0}^{\infty} e^{iuv-\frac{2}{3}((u+t)^3-t^3)}dudv=e^{-2tx}\\-\frac{e^{\frac{2}{3}t^3}}{4^{\frac{2}{3}}}\int_{v=-\infty}^{\infty} e^{-itv}\frac{\mathrm{Ai}(i\xi)\mathrm{Bi}(i\xi-4^{\frac{1}{3}}x)-\mathrm{Ai}(i\xi-4^{\frac{1}{3}}x)\mathrm{Bi}(i\xi)}{\mathrm{Ai}(i\xi)}dv
\end{split}
\end{align}
where $\xi=2^{-\frac{1}{3}}v$, and $\mathrm{Bi}$ is the second Airy function. By differentiating both sides with respect to $x$ and sending $x$ to zero, we get
\begin{equation}\label{check}
\frac{e^{\frac{2}{3}t^3}}{4^{\frac{1}{3}}\pi}\int_{v=-\infty}^{\infty}\frac{e^{-itv}}{\mathrm{Ai}(i\xi)} dv=\\
2t+\lim_{x \uparrow 0}\frac{\partial}{\partial x} \frac{1}{2\pi}\int_{v=-\infty}^{\infty}\frac{\mathrm{Ai}(i\xi-4^{\frac{1}{3}}x)}{\mathrm{Ai}(i\xi)} \int_{u=0}^{\infty} e^{iuv-\frac{2}{3}((u+t)^3-t^3)}dudv
\end{equation}
as the Wronskian of the Airy functions $\mathrm{Ai}$ and $\mathrm{Bi}$ is constant and equal to $\frac{1}{\pi}$. In the right-hand side of \eqref{check1}, we cannot differentiate inside the integral sign because it becomes divergent. However for fixed $x<0$, the integrand is absolutely integrable and thus we can use Fubini theorem. Now from \cite{janson}[Equation 384, Page 141] we have that 
\begin{align*}
-\int_{0}^{\infty}e^{-\lambda s}\mathbb{E}\left[\text{exp}\left(-2 \int_{0}^{s} B(u)du \right) \vert B(s)=-x\right]\frac{x}{\sqrt{2\pi s^3}} e^{-\frac{x^2}{2s}}ds=
\frac{\mathrm{Ai}(2^{-\frac{1}{3}}\lambda -4^{\frac{1}{3}}x)}{\mathrm{Ai}(2^{-\frac{1}{3}}\lambda)}
\end{align*}
where $B$ is as usual a three-dimensional Bessel process. Thus, by inverse Laplace transform we have 
\begin{align*}
-\mathbb{E}\left[\text{exp}\left(-2 \int_{0}^{u} B(z)dz \right) \vert B(u)=-x\right]\frac{x}{\sqrt{2\pi u^3}} e^{-\frac{x^2}{2u}}=
\frac{1}{2\pi} \int_{-\infty}^{\infty} e^{iuv} \frac{\mathrm{Ai}(i\xi-4^{\frac{1}{3}}x)}{\mathrm{Ai}(i\xi)}dv
\end{align*}
Hence the integral in the RHS of \eqref{check} is equal to 
\begin{equation}\label{check2}
-\int_{0}^{\infty} e^{-\frac{2}{3}((u+t)^3-t^3)} \frac{x}{\sqrt{2\pi u^3}} e^{-\frac{x^2}{2u}}\mathbb{E}\left[\text{exp}\left(-2 \int_{0}^{u} B(z)dz \right) \vert B(u)=-x\right]du
\end{equation}
By splitting this integral on $(0,\epsilon)$ and $(\epsilon,\infty)$, we can interchange the integral and the differentiation for the integral on $(\epsilon,\infty)$, and so we get after sending $x$ to zero 
\begin{equation}\label{greater-epsilon}
-\int_{\epsilon}^{\infty} e^{-\frac{2}{3}((u+t)^3-t^3)} \frac{1}{\sqrt{2\pi u^3}} \mathbb{E}\left[\text{exp} \left(-2\int_{0}^{u} \textbf{e}(z)dz \right)\right]du
\end{equation} 
where $\textbf{e}$ is as usual a Brownian excursion on the corresponding interval. As for the first term (the integral on $(0,\epsilon)$), by the change of variable $y=\frac{x}{\sqrt{u}}$ ($dy=-\frac{x}{2\sqrt{u^3}}du$), it is equal to
\begin{align*}
-\int_{0}^{\epsilon} e^{-\frac{2}{3}((u+t)^3-t^3)} \frac{x}{\sqrt{2\pi u^3}} e^{-\frac{x^2}{2u}}\mathbb{E}\left[\text{exp}\left(-2 \int_{0}^{u} B(z)dz \right) \vert B(u)=-x\right]du\\
=\int_{-\infty}^{\frac{x}{\sqrt{\epsilon}}} e^{-\frac{2}{3}((\frac{x^2}{y^2}+t)^3-t^3)} \frac{2}{\sqrt{2\pi}} e^{-\frac{y^2}{2}}\mathbb{E}\left[\text{exp}\left(-2\frac{x^3}{y^3} \int_{0}^{1} B(z)dz \right) \vert B(1)=-y\right]dy
\end{align*}
by Brownian scaling on the Bessel process $B$. Differentiating with respect to $x$, we get by Leibniz rule
\begin{equation}\label{less-epsilon}
\frac{2}{\sqrt{2\pi \epsilon}}e^{-\frac{2}{3}((\epsilon+t)^3-t^3)}e^{-\frac{x^2}{2\epsilon}}\mathbb{E}\left[\text{exp}\left(-2\sqrt{\epsilon^3} \int_{0}^1 B(z)dz \right) \vert B(1)=-\frac{x}{\sqrt{\epsilon}}\right]+F^{\epsilon}(x)
\end{equation}
where $F^{\epsilon}$ is equal to 
\begin{align*}
F^{\epsilon}(x)=\frac{2}{\sqrt{2\pi}} \int_{-\infty}^{\frac{x}{\sqrt{\epsilon}}} \left(-4\frac{x^5}{y^6}-8t\frac{x^3}{y^4}-4t^2\frac{x}{y^2}-6\frac{x^2}{y^3}\right)e^{-\frac{y^2}{2}}e^{-\frac{2}{3}((\frac{x^2}{y^2}+t)^3-t^3)}\times\\
\mathbb{E}\left[\text{exp}\left(-2\frac{x^3}{y^3} \int_{0}^{1} B(z)dz \right) \vert B(1)=-y\right]dy
\end{align*}
However we have that for $x$ small enough (such that $\vert \frac{x}{\sqrt{\epsilon}} \vert=-\frac{x}{\sqrt{\epsilon}} \le 1$)
\begin{align*}
\left\vert \int_{-\infty}^{\frac{x}{\sqrt{\epsilon}}} \frac{x}{y^2} e^{-\frac{y^2}{2}}e^{-\frac{2}{3}((\frac{x^2}{y^2}+t)^3-t^3)} \mathbb{E} \left[ \text{exp} \left (-2\frac{x^3}{y^3} \int_{0}^{1} B(z)dz \right) \vert B(1)=-y\right]dy \right \vert \le \\
\vert x \vert \int_{-\frac{x}{\sqrt{\epsilon}}}^{\infty} \frac{e^{-\frac{y^2}{2}}}{y^2}dy \le \vert x \vert (1-\frac{\sqrt{\epsilon}}{x}+\int_{1}^{\infty} e^{-\frac{y^2}{2}}dy)
\end{align*}
so 
\begin{align*}
\limsup\limits_{x \uparrow 0} \left\vert \int_{-\infty}^{\frac{x}{\sqrt{\epsilon}}} \frac{x}{y^2} e^{-\frac{y^2}{2}}e^{-\frac{2}{3}((\frac{x^2}{y^2}+t)^3-t^3)} \mathbb{E} \left[ \text{exp} \left (-2\frac{x^3}{y^3} \int_{0}^{1} B(z)dz \right) \vert B(1)=-y\right]dy \right \vert \le \sqrt{\epsilon}
\end{align*}
Similarly with the other terms we find that there is a constant $C>0$ (that depends on $t$) such that 
\begin{align*}
\limsup\limits_{x \uparrow 0} \vert F^{\epsilon}(x) \vert \le C \sqrt{\epsilon}
\end{align*}
Hence, by combining \eqref{greater-epsilon} and \eqref{less-epsilon}, the limit of the derivative of the expression in \eqref{check2} when $x$ goes to zero is equal to
\begin{align*}
-\int_{\epsilon}^{\infty} e^{-\frac{2}{3}((u+t)^3-t^3)} \frac{1}{\sqrt{2\pi u^3}} \mathbb{E}\left[\text{exp} \left(-2\int_{0}^{u} \textbf{e}(z)dz \right)\right]du+\\
\frac{2}{\sqrt{2\pi \epsilon}}e^{-\frac{2}{3}((\epsilon+t)^3-t^3)}\mathbb{E}\left[\text{exp}\left(-2\sqrt{\epsilon^3} \int_{0}^1 \textbf{e}(z)dz \right)\right]+\limsup\limits_{x \uparrow 0} F^{\epsilon}(x)
\end{align*}
Now it suffices to see that 
\begin{align*}
\frac{2}{\sqrt{2\pi \epsilon}}e^{-\frac{2}{3}((\epsilon+t)^3-t^3)}\mathbb{E}\left[\text{exp}\left(-2\sqrt{\epsilon^3} \int_{0}^1 \textbf{e}(z)dz \right)\right]=
\frac{2}{\sqrt{2\pi \epsilon}}+O(\sqrt{\epsilon})\\=
\int_{\epsilon}^{\infty}\frac{1}{\sqrt{2\pi u^3}}du+O(\sqrt{\epsilon})
\end{align*}
By sending $\epsilon$ to zero we get the desired result. \end{proof}

We are now ready to prove the Theorem \ref{scalar}.
\begin{proof}[Proof of Theorem \ref{scalar}]
Recall that our solution is expressed as 
\begin{align*}
\rho(x,t)=L'\left(\frac{y(x,t)-x}{t}\right)=L'\left(\frac{\Psi^{tL(\frac{\cdot}{t})}(x)-x}{t}\right)
\end{align*}
Hence, $\rho$ is stationary by Theorem \ref{psi}, and so it is a time-homogenous Markov process, its generator is determined by 
\begin{align*}
\mathcal{A}^{t}\varphi(y)&=\lim_{h \rightarrow 0} \frac{\mathbb{E}[\varphi(\rho(h,t))-\varphi(\rho_{-}) \vert \rho(0,t)=\rho_{-}]}{h}\\
&=\lim_{h \rightarrow 0}\frac{\mathbb{E}[\varphi(L'(\frac{\Psi^{tL(\frac{.}{t})}(h)-h}{t}))-\varphi(\rho_{-}) \vert \Psi^{tL(\frac{\cdot}{t})}(0)=tH'(\rho_{-})]}{h}\\
&=-\frac{1}{t}L''(H'(\rho_{-}))\varphi'(\rho_{-})+\mathcal{A}^{tL(\frac{\cdot}{t})} \varphi(L'(\frac{\cdot}{t}))(tH'(\rho_{-}))\\
&=-\frac{\varphi'(\rho_{-}))}{tH''(\rho_{-})}+\mathcal{A}^{tL(\frac{.}{t})} \varphi(L'(\frac{\cdot}{t}))(tH'(\rho_{-}))\\
&=-\frac{\varphi'(\rho_{-})}{tH''(\rho_{-})}+\int_{\rho_{-}}^{\infty}(\varphi(\rho_{+})-\varphi(\rho_{-})) n(\rho_{-},\rho_{+},t)d\rho_{+}
\end{align*}
where 
\begin{align*}
n(\rho_{-},\rho_{+},t)=tH''(\rho_{+})\frac{j^{tL(\frac{\cdot}{t})}(tH'(\rho_{+}))}{j^{tL(\frac{\cdot}{t})}(tH'(\rho_{-}))}K^{tL(\frac{\cdot}{t})}(tH'(\rho_{-}),tH'(\rho_{+}))
\end{align*}
By a change of variables we have 
\begin{align*}
K^{tL(\frac{\cdot}{t})}(tH'(\rho_{-}),tH'(\rho_{+}))=\frac{\rho_{+}-\rho_{-}}{\sqrt{2\pi t^3(H'(\rho_{+})-H'(\rho_{-}))^3}} \times \\
\text{exp} \left (-\frac{t}{2}  \int_{\rho_{-}}^{\rho_{+}} (\rho_{*})^2 H''(\rho_{*})d\rho_{*} \right)
\mathbb{E}\left[\text{exp} \left(-\int_{\rho_{-}}^{\rho_{+}} \textbf{e}(tH'(\rho_{*}))d\rho_{*} \right)\right]
\end{align*}
Similarly
\begin{align*}
-j^{tL(\frac{\cdot}{t})}(tH'(\rho_{-}))=\rho_{-}+\int_{\rho_{-}}^{\infty}\frac{1-p(\rho_{-},\rho,t)}{\sqrt{2\pi t(H'(\rho)-H'(\rho_{-}))^3}} H''(\rho) d\rho
\end{align*}
where 
\begin{align*}
p(\rho_{-},\rho,t)=\text{exp} \left (-\frac{t}{2}  \int_{\rho_{-}}^{\rho} (\rho_{*})^2 H''(\rho_{*})d\rho_{*} \right)
\mathbb{E}\left[\text{exp} \left(-\int_{\rho_{-}}^{\rho} \textbf{e}(tH'(\rho_{*}))d\rho_{*} \right)\right]
\end{align*}
The theorem then follows by appropriately defining the kernel $K$. 
\end{proof}
\begin{remark}
While our main study focused on the case where the initial potential is a two-sided Brownian motion. It is not hard to see that we can extend the result about the \textit{profile} of the entropy solution when the potential is a spectrally positive L\'evy process with non-zero Brownian exponent. The main ingredients that were used were respectively the path decomposition of Markov processes at their ultimate maximum and the regularity properties of the transition function $f$. Both these facts hold true in the L\'evy case when the initial potential $U_0$ has a non-zero Brownian exponent, as the only difference in the Kolmogorov forward equation is an added integral operator accounting for the jumps of the L\'evy process. A similar approach will lead to the same smoothness property away from the singularity line $\{t=s\}$ (the presence of the heat operator $\partial_t -\frac{1}{2} \partial^2_{y}$ is key to have parabolic smoothing), which will allow all the operations in the second section to be valid. Moreover, one should be able to extract similar expression for the jump kernel $n$ by using the Girsanov theorem version for L\'evy processes. We chose in this paper to only discuss the Brownian motion case because it gives a general idea on how things work and also because it simplifies greatly the computations. One would expect to have similar formulas where the equivalent of the Brownian excursion will be the L\'evy bridge informally defined as a L\'evy process conditioned to stay positive and to start and end at zero. Those bridges are discussed in \cite{bravo}.
\end{remark}
\section{Structure of shocks of the entropy solution}\label{sect5}
A priori, from the involved expression of the generator in Theorem \ref{scalar}, one cannot easily claim whether if the structure of shocks of the solution $\rho$ is discrete or not. Indeed, this amounts to checking if the following integrability condition on the jump kernel $n$ holds
\begin{align*}
\lambda(\rho_{-})=\int_{\rho_{-}}^{\infty} n(\rho_{-},\rho_{+},t)d\rho_{+} < \infty \text{ for all } \rho_{-} \in \mathbb{R}
\end{align*}
However, using the recent theory of Lipschitz minorants of L\'evy processes developed in \cite{evansabramson} and \cite{evansouaki}, and following some of the arguments from the study of the structure of shocks in Burgers equation of \cite{abramson}, it turns out that when the initial potential is an \textit{abrupt} spectrally positive L\'evy process, one can prove that the set of jump times of the solution $\rho$ is discrete. 

As we did with Theorem \ref{scalar}, we will prove a general statement for the process $\Psi^{\phi}$ from which Theorem \ref{discrete} will follow. We state thus the following theorem 
\begin{thm}\label{discrete-psi}
Assume that $U_0$ is an \textit{abrupt} spectrally positive L\'evy process and $\phi$ is a strictly convex function such that $\lim_{\vert y \vert \to \infty} \vert \phi'(y) \vert =+\infty$ and $\displaystyle \lim_{\vert y \vert \rightarrow +\infty} \frac{U_0(y)}{\phi(y)}=0$ almost surely, then the range of $\Psi^{\phi}$ is a.s discrete. 
\end{thm}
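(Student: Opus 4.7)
The plan is to follow Abramson's strategy \cite{abramson} for the Burgers case and adapt it to a general strictly convex $\phi$. The idea is to reduce the theorem to showing that contact points between $U_0$ and the parametrized family of convex upper envelopes $\{c+\phi(\cdot-x) : (x,c)\in\mathbb{R}^2\}$ cannot accumulate in any bounded interval, with the \emph{abrupt} property of $U_0$ providing the obstruction.

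First I would reduce to a local statement. By monotonicity of $\Psi^{\phi}$ (Theorem \ref{psi}) together with the growth hypotheses $|\phi'(y)|\to\infty$ and $U_0(y)/\phi(y)\to 0$ as $|y|\to\infty$, the map $\Psi^{\phi}$ sends $\pm\infty$ to $\pm\infty$, so it suffices to prove that the range of $\Psi^{\phi}$ meets $[-L,L]$ in a finite set a.s.\ for every $L>0$. On such a bounded range, the associated parameters $x$ and the slopes $s=\phi'(y-x)$ are also confined to bounded sets. The key structural observation is that at a contact point $y$ with parameter $x$, the defining inequality $U_0(z)-U_0(y)\leq\phi(z-x)-\phi(y-x)$ combined with a second-order Taylor expansion of $\phi$ at $y-x$ yields
\[
U_0(y+h)-U_0(y)\leq sh+Ch^2\qquad\text{and}\qquad U_0(y)-U_0(y-h)\geq sh-Ch^2
\]
for $h>0$ in a bounded range, where $C$ depends only on $\sup|\phi''|$ over a compact set. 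Thus every element of the range of $\Psi^{\phi}$ in $[-L,L]$ is a \emph{quasi-local-maximum} of the linearly tilted L\'evy process $Z^s(z):=U_0(z)-sz$, with a quadratic remainder governed by $\phi''$.

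Next I would argue, by contradiction, that such quasi-local-maxima cannot cluster when $U_0$ is abrupt. Suppose $y_n\to y^*$ is a sequence of distinct contact points in $[-L,L]$; by monotonicity of $\Psi^{\phi}$ and compactness we pass to a subsequence with $x_n\to x^*$, $s_n\to s^*$, and (without loss of generality) $y_n\downarrow y^*$. Substituting $z=y^*$ into the contact inequality at $y_n$, using convexity of $\phi$, and sending $n\to\infty$ yields $\liminf_n (U_0(y_n)-U_0(y^*))/(y_n-y^*)\geq s^*$; the upper slope bound at $y_n$ delivers the matching upper inequality. These combine to force the right-side slope of $Z^{s^*}$ at $y^*$ to vanish along the sequence $h_n:=y_n-y^*\downarrow 0$. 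Since $Z^{s^*}$ is itself an abrupt L\'evy process (the abrupt property being invariant under addition of a linear drift), the limit point $y^*$ would then have to be a local maximum of $Z^{s^*}$ with finite right-slope, directly contradicting the abruptness of $Z^{s^*}$.

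The main obstacle is that a quasi-local-maximum is not literally a local maximum, so Vigon's characterization of abruptness does not apply verbatim. Bridging this gap requires either promoting $y^*$ to a genuine local maximum of $Z^{s^*}$ via the combined bounds at every $y_n$, or appealing directly to the integral-test form of abruptness, $\int_0^1 t^{-1}\mathbb{P}(U_0(t)\in[at,bt])\,dt<\infty$ for all $a<b$, to exclude accumulation of points whose local slope is trapped in a compact interval. The cleanest implementation, following Abramson, uses a Campbell--Mecke-style expectation bound on the number of contact points in $[-L,L]$, combined with a discretization of the slope $s$ over the bounded range and a covering over the compact set of parameters $x$. The extension from the parabolic Burgers setting to general convex $\phi$ is then essentially routine, requiring only continuity of $\phi''$ on compact sets.
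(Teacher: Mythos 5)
Your approach diverges substantially from the paper's, which reduces the convex obstacle $\phi$ to a piecewise-linear cone: on the compact window where $\lambda_a$, $t$, and $a$ all live, the mean value theorem gives $\phi(t-a)-\phi(\lambda_a-a)\le C_M|t-\lambda_a|$ with $C_M=\sup_{[-2M,2M]}|\phi'|$, and then the structure of the $C_M$-Lipschitz majorant of $U_0$ is used to extend this bound globally, so that every $\lambda_a$ is shown to lie in the contact set $\mathcal{Z}_{C_M}$ between $U_0$ and its Lipschitz majorant. Discreteness then follows from the Abramson--Evans result that this contact set is discrete for abrupt L\'evy processes, plus a truncation argument to cover $\mathbb{E}[|U_0(1)|]=\infty$. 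You instead keep the second-order term in the Taylor expansion, view each $\lambda_a$ as a ``quasi-local-maximum'' of the tilted process $Z^{s}$, and try to pass to an accumulation point $y^*$ and invoke Vigon's criterion directly. This is a genuinely different route, but it has the gap you yourself flag, and that gap is not cosmetic: the inequality $Z^{s^*}(y^*+h)-Z^{s^*}(y^*)\le Ch^2$ at a single accumulation point $y^*$ does not make $y^*$ a local maximum of $Z^{s^*}$, nor of any slightly re-tilted $Z^{s^*\pm\eps}$ (the left-hand quadratic bound $Ch^2-\eps h$ is positive for small $h<0$). Vigon's abruptness criterion concerns one-sided slopes at genuine local maxima, so it simply does not bite here, and neither of your two suggested bridges is carried out.

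The first bridge (``promote $y^*$ to a genuine local maximum'') fails for the reason above. The second (``appeal to the integral test $\int_0^1 t^{-1}\mathbb{P}(U_0(t)\in[at,bt])\,dt<\infty$ plus a Campbell--Mecke bound over a discretized slope range'') is much closer in spirit to what actually works, but it is not an easy routine extension: making it rigorous essentially amounts to re-proving the discreteness of the Lipschitz contact set for abrupt processes, which is precisely the black-box Proposition 6.1 of Abramson--Evans that the paper cites. The cleaner path is the one in the paper: linearize $\phi$ to a Lipschitz cone on the relevant compact, patch the bound to all of $\mathbb{R}$ using the last/first contact points $G^{C_M}_0$ and $D^{C_M}_1$ of the Lipschitz majorant (this is where the event $\{G^{C_M}_0,D^{C_M}_1\in[-M,M]\}$ enters), and then quote discreteness of $\mathcal{Z}_{C_M}$. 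You should also address the infinite-mean case: the Lipschitz-majorant machinery requires a drift dominating $\mathbb{E}|U_0(1)|$, and when that moment is infinite the paper truncates the large jumps of $U_0$ and shows the truncated process already detects the same contact points on $[0,1]$; your sketch never confronts this.
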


\begin{proof}
From Theorem \ref{psi}, we know that for every $n \in \mathbb{Z}$ 
\begin{align*}
(\Psi^{\phi}(x+n)-n)_{x \in \mathbb{R}} \overset{\mathrm{d}}{=} (\Psi^{\phi}(x))_{x \in \mathbb{R}}
\end{align*}
hence it suffices to prove that the set $\text{range}(\Psi^{\phi}) \cap [0,1]$ is a.s discrete. Moreover, we can restrict the process $\Psi^{\phi}$ on $[-M,M]$. Indeed, we claim that the probability of the event 
\begin{align*}
A_M:=\{ \text{there exists } a \text{ such that } |a|\ge M \text{ and } \Psi^{\phi}(a) \in [0,1] \}
\end{align*}
goes to zero as $M$ goes to infinity. To show this claim, assume that there exists a sequence $(a_n)_{n \in \mathbb{N}}$ such that $\lambda_n:=\Psi^{\phi}(a_n) \in [0,1]$ and $\vert a_n \vert \to \infty$. By definition we have that 
\begin{equation}\label{ineq}
U_0(\lambda_n)-\phi(\lambda_n-a_n) \ge U_0(y)-\phi(y-a_n) \text{ for all } y
\end{equation}

Up to taking subsequences, we have either that $a_n \to \infty$ or $a_n \to -\infty$. If $a_n \to \infty$, take $y=a_n-1$ in \eqref{ineq}, then
\begin{equation}\label{inequ-1}
U_0(\lambda_n)-\phi(\lambda_n-a_n) \ge U_0(a_n-1)-\phi(-1)
\end{equation}
As $\phi'$ is strictly increasing, we must have $\lim_{y \rightarrow -\infty} \phi'(y)=-\infty$, and thus $\phi$ is decreasing for $y \rightarrow -\infty$. Hence from \eqref{inequ-1} and the fact that $\lambda_n \le 1$, we get
\begin{equation}\label{inequ-2}
U_0(\lambda_n)-U_0(a_n-1) \ge \phi(\lambda_n-a_n)-\phi(-1) \ge \phi(1-a_n)-\phi(-1)
\end{equation}
for $n$ large enough. However, because $(U_0(y))_{y\in \mathbb{R}}$ has the same distribution as $(-U_0((-y)-))_{y \in \mathbb{R}}$, then almost surely $\lim_{n \rightarrow \infty} \frac{U_0(a_n-1)}{\phi(1-a_n)}=0$, which is a contradiction with \eqref{inequ-2}. The case $a_n \rightarrow -\infty$ is similar by taking $y=a_n$ in \eqref{ineq}, proving thus our claim.\\

Define now the event $B_M$ as 
\begin{align*}
B_M=\left\{ \text{Card}\left(\text{range}(\Psi^{\phi}_{|[-M,M]|})\cap[0,1]\right)=\infty\right\}
\end{align*}
It suffices to prove that $\lim\limits_{M \rightarrow \infty} \mathbb{P}\left[B_M\right] =0$. \\

Suppose initially that $\mathbb{E}[|U_0(1)|]<\infty$ and let $C_M:=\sup\limits_{t \in [-2M,2M]} \vert \phi'(t) \vert $. Because of our assumption on $\phi$, then for $M$ large enough we have that $\mathbb{E}[\vert U_0(1) \vert]<C_M$. For any $a \in [-M,M]$ such that $\lambda_a:=\Psi^{\phi}(a) \in [0,1]$, we have for all $t \in [-M,M]$
\begin{equation}\label{inequ1}
U_0(t)-U_0(\lambda_a) \le \phi(t-a)-\phi(\lambda_a-a) \le C_M |t-\lambda_a|
\end{equation}
For $\alpha>0$ such that $\mathbb{E}[\vert U_0(1) \vert]<\alpha$, let us consider now the process $L^{\alpha}_0$ that is the $\alpha$-Lipschitz majorant of $U_0$, defined formally as 
\begin{align*}
L^{\alpha}_0(y)=\sup_{z \in \mathbb{R}} \left\{ U_0(z)-\alpha |z-y| \right \}
\end{align*}
We refer the reader to the two papers \cite{evansabramson} and \cite{evansouaki} for a detailed study of the Lipschitz minorant of a L\'evy process. Consider $G^{\alpha}_t$ (resp. $D^{\alpha}_t$) to be the last contact point before $t$ (resp. the first contact point after $t$) of $L^{\alpha}_0$ with $U_0$, i.e. 
\begin{align*}
G^{\alpha}_t = \sup \left \{ y<t : L_0^{\alpha}(y)=U_0(y) \right \} \text{ and } D^{\alpha}_t = \inf \left \{ y > t : L_0^{\alpha}(y)=U_0(y) \right \}
\end{align*} 
for any $t \in \mathbb{R}$. Moreover, let $\mathcal{Z}_{\alpha}$ be the contact set of $L^{\alpha}_0$ and $U_0$ defined as 
 \begin{align*}
 \mathcal{Z}_{\alpha}:=\{ y \in \mathbb{R} : L^{\alpha}_0(y)=U_0(y) \}
 \end{align*}
 Then on the event $\{ G^{C_M}_0, D^{C_M}_1 \in [-M,M]\}$, from the inequality \eqref{inequ1}, we have 
\begin{align*}
 U_0(G^{C_M}_0)-U_0(\lambda_a) \le C_M(\lambda_a -G^{C_M}_0) \text{ and } U_0(D^{C_M}_1)-U_0(\lambda_a) \le C_M(D^{C_M}_1-\lambda_a)
\end{align*}
Hence for $t \ge M$, we have 
\begin{align*}
U_0(t)-U_0(\lambda_a) &\le U_0(D^{C_M}_1)+C_M(t-D^{C_M}_1)-U_0(\lambda_a) \\
&\le C_M(D^{C_M}_1-\lambda_a)+C_M(t-D^{C_M}_1)=C_M\vert t-\lambda_a\vert
\end{align*}
Similarly for $t \le -M$ we get the same result. Together with \eqref{inequ1}, we deduce that for any $a \in [-M,M]$ such that $\lambda_a:=\Psi^{\phi}(a)\in [0,1]$, $\lambda_a$ is in the contact set $\mathcal{Z}_{C_M}$. However when $U_0$ is abrupt, we know from \cite{evansabramson}[See proof of Proposition 6.1] that this set is discrete, and hence $\mathcal{Z}_{C_M}\cap[0,1]$ is finite. Thus 
\begin{equation}\label{inequ dg}
\mathbb{P}[B_M] \le \mathbb{P}[G^{C_M}_0 \le -M] + \mathbb{P}[D^{C_M}_1 \ge M]
\end{equation} 
Now it is not hard to see that for $\alpha<\alpha'$, we have that $\mathcal{Z}_{\alpha} \subset \mathcal{Z}_{\alpha'}$. Hence, for $M$ large enough we have 
\begin{equation}\label{comparison}
D_1^{C_M} \le D_1^{\beta} , ~~ G^{C_M}_{0}\ge G^{\beta}_0
\end{equation}
where $\beta=\mathbb{E}[\vert U_0(1) \vert]+1$ is independent of $M$. However, from \cite{evansabramson}[Theorem 2.6] we know that the set $\mathcal{Z}_{\beta}$ is stationary and regenerative (see \cite{taksar} for the precise definition of stationary regenerative sets), thus the random variables $D^{\beta}_1-1$ and $-G^{\beta}_0$ have the same distribution as $D^{\beta}_0$. Moreover from \cite{evansabramson}[Equation (4.7)], we have that 
\begin{align*}
\mathbb{P}[D^{\beta}_0-G^{\beta}_0 \in dx]=\frac{x\Lambda^{\beta}(dx)}{\int_{\mathbb{R}+} x\Lambda^{\beta}(dx)}
\end{align*}
where $\Lambda^{\beta}$ is the L\'evy measure of the subordinator associated with the contact set $\mathcal{Z}_{\beta}$ (the stationarity of $\mathcal{Z}_{\beta}$ ensuring that $\int_{\mathbb{R}+} x\Lambda^{\beta}(dx)<\infty$). It follows thus from \eqref{comparison} that the right-hand side of \eqref{inequ dg} goes to zero when $M \rightarrow \infty$, from which we get the desired result that the range of $\Psi^{\phi}$ is discrete when $\mathbb{E}[\vert U_0(1) \vert]<\infty$.\\

Now, if $\mathbb{E}[\vert U_0(1) \vert]=\infty$, consider for any $N \in \mathbb{N}$  the truncated process $U_0^{N}$, that is the process $U_0$ started at zero and with its jumps of size greater than $N$ removed. It is formally defined as :

\begin{equation}\label{truncation}
U_0^{N}(y)=\left\{
    \begin{array}{ll}
       U_0(y)-\sum_{0 \le z \le y} (U_0(z)-U_0(z-))\ind_{\{ U_0(z)-U_0(z-) \ge N \}} \text{ if }~~ y \ge 0\\
       U_0(y)+\sum_{y \le z \le 0} (U_0(z)-U_0 (z-)) \ind_{\{ U_0(z)-U_0(z-) \ge N\}} \text{ if }  ~~ y \le 0
    \end{array}
\right.
\end{equation}

 We have that $\mathbb{E}[\vert U_0^{N}(1)\vert]<\infty$ as any L\'evy process with uniformly bounded jumps has finite moments of any order (see \cite{schilling}[Lemma 8.2]). Hence, if we denote by $\Psi^{\phi}_N$ the process $\Psi^{\phi}$ where we replace $U_0$ by $U_0^{N}$. By what we proved previously, we have that almost surely, the set $\text{range}(\Psi^{\phi}_N) \cap [0,1]$ is finite for every $N \in \mathbb{N}$ (as the finiteness of the moment of order $1$ of $U_0^{N}(1)$ ensures by the law of large numbers that $U_0^{N}(y)=o(\phi(y))$). By the arguments provided before, it suffices to prove that $\text{range}(\Psi^{\phi}_{|[-M,M]})\cap[0,1]$ is finite for every $M \ge 0$. 
Now, for $N \ge 1$ and $y \ge 0$ we have that 
\begin{align*}
|U_0^{N}(y)| &\le |U_0(y)|+\sum_{0 \le z \le y} (U_0(z)-U_0(z-))\ind_{\{ U_0(z)-U_0(z-) \ge N \}}\\
&\le |U_0 (y)| + |U_0(y)-U_0^{1}(y)| \le 2 |U_0(y)|+ |U_0^{1}(y)|
\end{align*}
and similarly for $y \le 0$. Thus almost surely
\begin{align*}
\lim_{\vert y \vert \to \infty} \sup_{N \ge 1} \frac{|U_0^{N}(y)|}{\phi(y \pm M)}=0
\end{align*}
as by the law of large numbers $U_0^{1}(y)=O(|y|)$. Let $K_1>0$ such that for all $|y| \le K_1$, we have almost surely
\begin{align*}
\sup_{N \ge 1} \frac{|U_0^{N}(y)|}{\phi(y \pm M)} \le \frac{1}{2}
\end{align*}
Let $K_2>0$ such that $y \mapsto \phi(y)$ is increasing on $[K_2,+\infty)$ and decreasing on $(-\infty,-K_2)$, then for $|y| \ge \max(K_1,K_2)+M$ and $a\in [-M,M]$, we have 
\begin{align*}
U_0^{N}(y)-\phi(y-a) &\le U_0^{N}(y)-\phi(y\pm M)\\
&\le -\frac{1}{2}\phi(y\pm M) \underset{y \to +\infty}{\rightarrow} -\infty
\end{align*}
Hence there exists $K>1$ large enough such that 
\begin{equation}\label{sup-sup}
\sup_{|y| \ge K} \sup_{a \in [-M,M]} \sup_{N \ge 1} \left(U_0^{N}(y)-\phi(y-a)\right) \le B:=\inf_{\lambda \in [0,1], a \in [-M,M]} \left(U_0(\lambda)-\phi(\lambda-a)\right)
\end{equation}
Now, the largest jump size of the process $U_0$ on any compact interval $[-R,R]$ is almost surely finite, because 
\begin{align*}
\mathbb{P}[\exists y \in [-R,R], U_0(y)-U_0(y-) \ge N] =1-e^{-2R \Pi([N,+\infty))} \underset{N \to \infty}{\rightarrow} 0
\end{align*}
where $\Pi$ is the L\'evy measure of $U_0$. Hence there exists a random $\tilde{N}$ such that $U_0^{\tilde{N}}(y)=U_0(y)$ on $[-K,K]$, and thus if $\text{range}(\Psi^{\phi}_{|[-M,M]})\cap[0,1]$ is infinite, then there exists infinitely many $\lambda_a \in [0,1]$ such that
\begin{align*}
U_0(\lambda_a)-\phi(\lambda_a -a) \ge U_0(y)-\phi(y-a) \text{ for all } y 
\end{align*}
which in light of \eqref{sup-sup} implies that 
\begin{align*}
U_0^{\tilde{N}}(\lambda_a)-\phi(\lambda_a -a) \ge U_0^{\tilde{N}}(y)-\phi(y-a) \text{ for all } y 
\end{align*}
and this is a contradiction with the fact that $\text{range}(\Psi^{\phi}_{\tilde{N}}) \cap [0,1]$ is finite, thus completing the proof.
\end{proof}
Finally, we are left to prove Theorem \ref{discrete}
\begin{proof}[Proof of Theorem \ref{discrete}]
In light of Theorem \ref{discrete-psi}, it suffices to check that for any $t>0$ we have 
\begin{align*}
\lim_{ \vert x \vert \to \infty} \left\vert L'\left(\frac{x}{t}\right) \right \vert = +\infty 
\end{align*}
However, due to the convexity of $L$, the function $L'$ is increasing and thus the limits,
\[
l^{+}:=\lim_{x \rightarrow \infty} L'(x) \text{ and } l^{-}:=\lim_{x \rightarrow -\infty} L'(x)
\]
exist. However, due to the superlinear growth of $H$ (and thus of $L$), it must be that $l^{+}=\infty$ and $l^{-}=-\infty$, which gives the desired result.
\end{proof}
\begin{remark}
The class of abrupt L\'evy processes mentioned in Theorem \ref{discrete} is quite large. Indeed, it contains any linear combination of Brownian motion with linear drift and stable L\'evy processes with index $\alpha \in (1,2)$ with its negative jumps removed.
\end{remark}

\bibliographystyle{abbrv}
\bibliography{scalarconvlaw}
\end{document}